\documentclass{amsart}
\usepackage[utf8]{inputenc}
\usepackage{amsmath, amssymb, amsthm, bm, bbm}
\usepackage{tikz-cd}
\usepackage{tikz}
\usepackage{minibox}
\usepackage[margin = 2cm]{geometry}
\usepackage[pdftex, colorlinks, linkcolor = blue, urlcolor = blue, citecolor = blue]{hyperref}

\usepackage{mathtools}
\usepackage{mhchem}

\newcommand{\myrightleftarrows}[1]{\mathrel{\substack{\xrightarrow{#1} \\[-.9ex] \xleftarrow{#1}}}}

\setlength{\parskip}{0.2em}

\newtheorem{theorem}{Theorem}[section]
\newtheorem{proposition}[theorem]{Proposition}
\newtheorem{lemma}[theorem]{Lemma}

\newtheorem{corollary}[theorem]{Corollary}
\theoremstyle{definition}

\newtheorem{remark}[theorem]{Remark}
\theoremstyle{remark}
\newtheorem{definition}[theorem]{Definition}
\newtheorem*{claim}{Claim $(\ast)$}
\newtheorem*{proofofextensiontheorem}{Proof of Theorem~\ref{vermaexttheorem}}
\numberwithin{equation}{section}

\newcommand{\C}{\mathbb{C}}
\newcommand{\Z}{\mathbb{Z}}
\newcommand{\N}{\mathbb{N}}
\newcommand{\g}{\mathfrak{g}}
\newcommand{\h}{\mathfrak{h}}
\newcommand{\n}{\mathfrak{n}}
\newcommand{\p}{\mathfrak{p}}
\renewcommand{\sl}{\mathfrak{sl}}
\renewcommand{\r}{\mathfrak{r}}
\renewcommand{\l}{\mathfrak{l}}
\newcommand{\z}{\mathfrak{z}}
\newcommand{\m}{\mathfrak{m}}
\newcommand{\Sf}{\mathfrak{S}}
\renewcommand{\b}{\mathfrak{b}}

\newcommand{\calO}{\mathcal{O}}
\renewcommand{\O}{\mathcal{O}}
\newcommand{\X}{\mathcal{X}}
\newcommand{\Y}{\mathcal{Y}}

\newcommand{\lmod}{\operatorname{-mod}}

\newcommand{\Hom}{\operatorname{Hom}}
\newcommand{\rank}{\operatorname{rank}}
\newcommand{\dimension}{\operatorname{dim}}
\newcommand{\vspan}{\operatorname{span}}
\newcommand{\im}{\operatorname{im}}
\newcommand{\id}{\operatorname{id}}
\newcommand{\gr}{\operatorname{gr}}

\newcommand{\supp}{\operatorname{supp}}
\newcommand{\wt}{\operatorname{wt}}
\newcommand{\ch}{\operatorname{ch}}
\newcommand{\ad}{\operatorname{ad}}

\newcommand{\Ind}{\operatorname{Ind}}
\newcommand{\Ann}{\operatorname{Ann}}
\newcommand{\Id}{\operatorname{\mathbbm{1}}}
\newcommand{\ev}{\operatorname{ev}}
\newcommand{\Lie}{\operatorname{Lie}}

\def\onto{\twoheadrightarrow}
\def\isoto{\overset{\sim}{\longrightarrow}}

\title{Category $\calO$ for Truncated Current Lie Algebras}
\author{Matthew Chaffe and Lewis Topley}
\email{mxc167@student.bham.ac.uk} 

\email{lt803@bath.ac.uk} 

\address{School of Mathematics, University of Birmingham, Edgbaston, Birmingham, B15 2TT, UK.}
\address{Department of Mathematical Sciences, University of Bath, North Road, Claverton Down, Bath BA2 7AY, UK.}

\address{}

\begin{document}
\maketitle

\begin{abstract}
In this paper we study an analogue of the Bernstein--Gelfand--Gelfand category $\O$ for truncated current Lie algebras $\g_n$ attached to a complex semisimple Lie algebra. This category admits Verma modules and simple modules, each parametrised by the dual space of the truncated currents on a choice of Cartan subalgebra in $\g$. Our main result describes an inductive procedure for computing composition multiplicities of simples inside Vermas for $\g_n$, in terms of similar composition multiplicities for $\l_{n-1}$ where $\l$ is a Levi subalgebra. As a consequence, these numbers are expressed as integral linear combinations of Kazhdan--Lusztig polynomials evaluated at 1. This generalises recent work of the first author, where the case $n=1$ was treated.
\end{abstract}

\section{Introduction}

Truncated current Lie algebras have appeared in numerous parts of the literature in recent years, and a large part of their interest stems from the fact that they interpolate between the finite dimensional simple Lie algebras and the vacuum parabolic of the corresponding untwisted affine Lie algebra. If $\g = \Lie(G)$ is the Lie algebra of a complex reductive algebraic group then the group $G[t]$ of polynomial currents in $G$ is the (infinite dimensional) algebraic group of regular maps $\mathbb{A}^1_\C \to G$. The {\it current Lie algebra} $\g[t] := \Lie G[t] = \g \otimes \C[t]$ is isomorphic to the derived subalgebra of a maximal parabolic of the Kac--Moody affinisation of $\g$, and the {\it $n$th  truncated current Lie algebra} is the quotient $\g_n := \g \otimes \C[t]/(t^{n+1})$. Equivalently, $\g_n$ can be described as the Lie algebra of the $n$th jet scheme $J_n G$ of $G$. \vspace{2pt}

The first truncated currents Lie algebra $\g_1$ appeared in the work of Takiff, and so they are often referred to as {\it Takiff Lie algebras}. He showed that the symmetric invariant algebra $S(\g_1)^{\g_1}$ is a polynomial algebra on $2\rank(\g)$ variables \cite{T}, generalising the classical theorem of Chevalley which describes $S(\g_0)^{\g_0}$. Later Ra{\"i}s--Tauvel extended Takiff's theorem for arbitrary $n$ \cite{RT}. Since the latter work is a crucial ingredient in our paper, we will briefly describe their main results in Section~\ref{ss:centreofenvelopingalg}. More recently, Macedo--Savage extended their work further to the case of truncated multicurrents \cite{MSa}, and  Panyushev--Yakimova showed that for complex Lie algebras the operation $\g \mapsto \g_1$ preserves the property of having polynomial symmetric invariants, under mild assumptions \cite{PY}. \vspace{2pt}

These works on invariant theory have important applications in the theory of vertex algebras. Notably, the first approximation to describing the centre of the critical level universal affine vertex algebra associated to $\g$ (a.k.a. the Feigin--Frenkel centre) is the description of the semi-classical limit, which is equal to $S(\g[t^{-1}]t^{-1})^{\g[t]}$. In turn can be described as the direct limit of the algebras $S(\g_n)^{\g_n}$ discussed in the previous paragraph (see \cite[\textsection 3.4]{Fr} or \cite[\textsection 6]{Mo} for example). This connection with the Feigin--Frenkel centre was recently used by Arakawa--Premet to provide a positive solution to Vinberg's problem for centralisers, using affine $W$-algebras \cite{AP}. We also mention the work of Kamgarpour \cite{Ka} for a discussion of the relationship with the geometric Langlands program. Another connection with the theory of $W$-algebras is given by the work of Brundan--Kleshchev \cite[\textsection 12]{BK}, which states that the finite $W$-algebra for $\mathfrak{gl}_N$ associated to a nilpotent element with all Jordan blocks of size $n$ is isomorphic to a truncated Yangian, which admits $U(\g_n)$ as a filtered degeneration. \vspace{2pt}

This article focuses on the representation theory of truncated current algebras. The most famous category of modules for a complex reductive Lie algebra $\g$ is the Bernstein--Gelfand--Gelfand (BGG) category $\O$, which is an abelian category containing all highest weight modules (see \cite{H} for a survey). Wilson extended the notion of highest weight modules to all truncated currents \cite{W}, and subsequently Mazorchuck--Sörderberg introduced a version of category $\O$ for Takiff $\sl_2$ \cite{MS}. The most recent development in this field is the work of the first author \cite{Ch} which made a thorough study of category $\O$ for all Takiff Lie algebras, eventually showing that the composition multiplicities of simple modules inside Verma modules can be determined by certain formulas involving Kazhdan--Lusztig polynomials. The results of the present paper generalise all of the main results of {\it loc. cit.} to the case of truncated current Lie algebras $\g_n$.\vspace{2pt}

For the rest of the introduction we fix $\g = \Lie(G)$ where $G$ is a complex reductive algebraic group and fix $n > 0$. We also fix a triangular decomposition $\g = \n^- \oplus \h \oplus \n^+$ and write $\b = \h \oplus \n^+$. This gives rise to a triangular decomposition $\g_n = \n^-_n \oplus \h_n \oplus \n^+_n$, and we say that a module is {\it highest weight} if it is generated (as a $\g_n$-module) by a one dimensional $\b_n$-module. These one dimensional modules are parametrised by $\h_n^*$, and the one dimensional module afforded by $\lambda \in \h_n^*$ is denoted $\C_\lambda$. We define the universal highest weight module or {\it Verma module} of weight $\lambda$ by
\begin{eqnarray}
M_\lambda := U(\g_n) \otimes_{U(\b_n)} \C_\lambda.
\end{eqnarray}
Since this is semisimple over $\h = \h \otimes 1 \subseteq \h_n$, with one dimensional $\lambda|_\h$-weight space, it follows that it has a unique maximal submodule and a unique simple quotient, which we denote $L_\lambda$.\vspace{2pt}

We study the category $\O(\g_n)$ of finitely generated $\g_n$-modules on which $\n_n^+$ acts locally nilpotently, $\h$ acts semisimply and $\h_n^{\ge 1} :=  \h \otimes t\C[t] / (t^{n+1}) \subseteq \h_n$ acts locally finitely (see Definition~\ref{catOdef}). This category is closed under quotients and submodules, and the simple modules are precisely $\{L_\lambda \mid \lambda \in \h_n^*\}$.
\vspace{2pt} 

One especially nice feature of the BGG category $\O$ for $\g$ it that it is artinian, however this fails for $\O(\g_n)$. Therefore we define composition multiplicities $[M : L_\mu]$ using formal characters: every module $M \in \O(\g_n)$ has finite dimensional $\h$-weight spaces and the formal character $\ch M$ can be expressed uniquely as a non-negative integral linear combinations of characters of simple modules. We call the coefficients appearing in these sums {\it the composition multiplicities of $M \in \O(\g_n)$}. We explain this in more detail in Section~\ref{S:compmult}, and give a description (see Lemma~\ref{compmultalternatedefn}) of the $[M : L_\lambda]$ in the spirit of composition multiplicities for affine Lie algebras \cite[Proposition~9.7]{Kac}.\vspace{2pt}

We now state the main result of this paper, which follows directly from Corollary~\ref{C:reducingthelevel}
\begin{theorem}
\label{T:intromain}
Let $n > 0$, let $G$ be a connected reductive group and $\lambda, \mu \in \h_n^*$. The composition multiplicity $[M_\lambda : L_\mu]$ can be expressed via a precise formula \eqref{e:compmultreduction} in terms of composition multiplicities of simple modules inside Verma modules for a truncated current algebra $\l_{n-1}$ where $\l = \Lie(L)$ for some Levi subgroup $L \subseteq G$.
\end{theorem}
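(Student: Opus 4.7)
The plan is to reduce the composition multiplicity in two stages: first pass from $\g_n$ to $\l_n$ via parabolic induction, then absorb the top-level weight to descend from $\l_n$ to $\l_{n-1}$. Decompose $\lambda = (\lambda_0, \ldots, \lambda_n) \in \h_n^*$ with $\lambda_i = \lambda|_{\h \otimes t^i}$, and attach to $\lambda_n \in \h^*$ the Levi subalgebra $\l \subseteq \g$ whose roots are precisely those $\alpha \in \Phi$ with $\lambda_n(h_\alpha) = 0$ (equivalently, the coadjoint centraliser of $\lambda_n$ extended trivially to $\g^*$). With $\p = \l \oplus \r$ the associated standard parabolic, one has $\g_n = \r_n^- \oplus \l_n \oplus \r_n$ and $\p_n = \l_n \oplus \r_n$.

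The first stage relies on the Ra{\"i}s--Tauvel description of $S(\g_n)^{\g_n}$ from Section~\ref{ss:centreofenvelopingalg}: the central character of $M_\lambda$ is controlled by (at least) the $W$-orbit of $\lambda_n$, so any simple $L_\mu$ occurring in $M_\lambda$ must have $\mu_n$ in the $W$-orbit of $\lambda_n$. Passing to the dominant representative $\mu_n = \lambda_n$, I would show that parabolic induction $\Ind_{\p_n}^{\g_n}$ is exact on the relevant subcategory of $\O(\l_n)$, intertwines Verma modules, and --- because the $W$-orbit constraint forces $\r_n$ to act trivially on $L_\mu^{\l_n}$ --- carries simples to simples, yielding $[M_\lambda^{\g_n} : L_\mu^{\g_n}] = [M_\lambda^{\l_n} : L_\mu^{\l_n}]$. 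This step is essentially the truncated-current analogue of the classical Jantzen/BGG parabolic reduction, made possible by the $\lambda_n$-rigidity.

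The second stage, from $\l_n$ to $\l_{n-1}$, is the main obstacle. Since $\lambda_n$ vanishes on $[\l,\l] \cap \h$ by construction, it factors through $\z(\l)$, and $\z(\l) \otimes t^n \subseteq \l_n$ acts on $M_\lambda^{\l_n}$ by the corresponding central character. The goal is to construct an equivalence between the block of $\O(\l_n)$ containing $M_\lambda^{\l_n}$ and a block of $\O(\l_{n-1})$, sending $M_\lambda^{\l_n} \mapsto M_{\lambda'}^{\l_{n-1}}$ and $L_\mu^{\l_n} \mapsto L_{\mu'}^{\l_{n-1}}$ for weights $\lambda', \mu' \in \h_{n-1}^*$ explicitly determined from $\lambda, \mu$ (with a likely $\rho(\r)$-shift on the bottom component inherited from the first stage). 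The difficulty is that the abelian ideal $\l \otimes t^n \subseteq \l_n$ is not central, so the equivalence cannot be a naive quotient; instead one needs a twisting construction built from $\lambda_n$ that absorbs the $t^n$-level action into the lower levels. The $n=1$ case treated by Chaffe gives the template, and the main technical content should be verifying that this twist yields a genuine equivalence of categories that preserves Verma modules and simple quotients. Combining both stages yields the formula \eqref{e:compmultreduction} and thereby Theorem~\ref{T:intromain}.
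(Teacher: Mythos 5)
Your stage 1 (reduce $\g_n$ to $\l_n = \g^{\lambda_n}_n$ via parabolic induction, and normalise $\lambda_n$ so that $\g^{\lambda_n}$ is a standard Levi) matches the paper's strategy (Theorems~\ref{maintheorem} and \ref{maintwistingtheorem} together with Lemma~\ref{L:supportonthecentreequiv}), although you understate the technical work: conjugating $\lambda_n$ into the dominant chamber is not a harmless relabelling inside $\O(\g_n)$, and the paper needs the full machinery of Arkhipov twisting functors $T_\alpha, G_\alpha$ to implement it as a block equivalence. Also note that the Jordan decomposition of $\O(\g_n)$ forces $\mu_n = \lambda_n$ exactly, not merely $W$-conjugate, which is a sharper statement than the central-character constraint you invoke.

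Stage 2 contains the real gap. You propose to construct an equivalence between a block of $\O(\l_n)$ and a block of $\O(\l_{n-1})$ sending $M_\lambda^{\l_n} \mapsto M_{\lambda'}^{\l_{n-1}}$ and $L_\mu^{\l_n} \mapsto L_{\mu'}^{\l_{n-1}}$. No such equivalence can exist: once $\lambda_n = 0$, the $\l_n$-Verma $M_\lambda$ has
\[
\ch M_\lambda \;=\; \ch U(\n^-\otimes t^n)\cdot \ch M_{\lambda_{\le n-1}}(\l_{n-1}) \;=\; \sum_{\alpha\in\Z_{\ge 0}\Phi^+} p(\alpha)\,\ch M_{\lambda_{\le n-1}-\alpha_0}(\l_{n-1}),
\]
so a single $\l_n$-Verma multiplicity equals a \emph{sum} (over Kostant's partition function) of $\l_{n-1}$-Verma multiplicities, exactly as in \eqref{e:compmultreduction}. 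A Verma-to-Verma, simple-to-simple equivalence would force an equality of single multiplicities and could never produce this convolution. The paper does not descend from $\l_n$ to $\l_{n-1}$ by an equivalence at all. Instead, after reducing to $\lambda_n = 0$ by tensoring with a one-dimensional module (Lemma~\ref{L:supportonthecentreequiv}, which \emph{is} a naive operation because $\lambda_n$ lives on $\z(\l)$), it works in the Grothendieck group: the filtration of any $M\in\O^{(\mu)}(\l_n)$ by powers of $\l\otimes t^n$ has subquotients living in $\O(\l_{n-1})$ (Lemma~\ref{compmultalternatedefn}), the character identity above (Lemma~\ref{L:characterupgrade}) handles Vermas, and the observation that $L_\nu$ with $\nu_n=0$ is the inflation of $L_{\nu_{\le n-1}}$ along $\l_n\onto\l_{n-1}$ (Lemma~\ref{simple0modules}) handles simples. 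Replacing your hypothetical twisting equivalence by this character/filtration argument is the missing idea.
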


Theorem~\ref{T:intromain} suggests an inductive procedure for calculating the composition multiplicities $[M_\lambda : L_\mu]$ for the $n$th truncated current algebra in terms of the analogous composition multiplicities for $\g_0$. By the proof of the Kazhdan--Lusztig conjecture by Beilinson--Bernstein and Brylinski--Kashiwara (see \cite{HTT}) we know that for $n = 0$ the composition multiplicities $[M_\lambda : L_\mu]$ are given by the evaluation at 1 of certain Kazhdan--Lusztig polynomials. Hence for $n > 0$ the values $[M_\lambda : L_\mu]$ can be described by formulas involving non-negative integral linear combinations of these Kazhdan--Lusztig polynomials at 1. It is interesting to wonder whether our formulas have a natural geometric interpretation on the flag variety. \vspace{2pt}

We remark that the methods of this article generalise those of \cite{Ch}. In the rest of the introduction we highlight some of our other key results, and describe the structure of the paper.\vspace{2pt}

In Section~\ref{S:prelims} we introduce the basic notation which will be used throughout the paper. We then recall the work of Ra{\"i}s--Tauvel \cite{RT} on $S(\g_n)^{\g_n}$ and use it to describe the generators of the centre of the enveloping algebra $U(\g_n)$.\vspace{2pt}

In Section~\ref{S:catO} we describe all of the elementary properties of our category $\O(\g_n)$. We also explain that the category can be decomposed into a direct sum of abelian subcategories which are parameterised by the generalised eigenspaces of $\h\otimes t^i$ for $i =1,...,n$. The generalised eigenvalues are parametrised by $(\h_n^{\ge 1})^*$, and for $\mu \in (\h_n^{\ge 1})^*$ we call the subcategory $\O^{(\mu)}(\g_n)$ a {\it Jordan block of} $\O(\g_n)$. \vspace{2pt}

In Section~\ref{S:parabolicinduction} we introduce one of our main tools for simplifying the the study of Jordan blocks of $\O(\g_n)$. Let $\mu \in (\h_n^{\ge 1})^*$ and let $\mu_n := \mu|_{\h\otimes t^n}$, which we identify with an element of $\h^*$ in the obvious fashion. The centraliser $\l := \g^{\mu_n}$ is a Levi subalgebra containing $\h$, and we suppose that it is the Levi factor of a standard parabolic $\p\subseteq \g$ such that $\r = \operatorname{Rad}(\p) \subseteq \n^+$. In this case we can parabolically induce modules from $\O^{(\mu)}(\l_n)$ to $\O^{(\mu)}(\g_n)$. Similarly we have a functor of $\r_n$-invariants in the opposite direction.\vspace{2pt}

A precise statement of the following result is given in Theorem~\ref{maintheorem}.
\begin{theorem}
\label{T:introparabolic}
If $\l = \g^{\mu_n}$ is a standard Levi subalgebra, parabolic induction and $\r_n$-invariants are quasi-inverse equivalences between $\O^{(\mu)}(\l_n)$ and $\O^{(\mu)}(\g_n)$.
\end{theorem}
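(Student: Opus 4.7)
The plan is to realise parabolic induction and $\r_n$-invariants as an adjoint pair via Frobenius reciprocity, and then reduce the equivalence to showing that the unit and counit are isomorphisms on the Jordan blocks. The decisive input throughout will be the hypothesis $\l=\g^{\mu_n}$, which ensures that the non-trivial action of $\h\otimes t^n$ separates $\l$ from $\r$.

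First I would define $\Ind V:=U(\g_n)\otimes_{U(\p_n)}V$, where $V\in\O^{(\mu)}(\l_n)$ is viewed as a $\p_n$-module by letting $\r_n$ act trivially. Since any $\p_n$-linear map from such a $V$ into a $\g_n$-module $M$ must land in $M^{\r_n}$, Frobenius reciprocity yields
\[
\Hom_{\g_n}(\Ind V,\,M)\;\cong\;\Hom_{\l_n}\bigl(V,\,M^{\r_n}\bigr),
\]
so $(\Ind,\,(-)^{\r_n})$ is an adjoint pair. I would then check that both functors preserve the Jordan block associated to $\mu$. The key observation is that for $h\otimes t^i$ with $i\ge 1$ the commutator $[h\otimes t^i,\,x_\alpha\otimes t^j]=\alpha(h)\,x_\alpha\otimes t^{i+j}$ strictly raises the $t$-degree, so $\ad(\h\otimes t^{\ge 1})$ is a nilpotent derivation of $U(\r_n^-)$; combined with $\h$-semisimplicity and local nilpotence of $\n_n^+$, this places $\Ind V$ into $\O^{(\mu)}(\g_n)$ and $M^{\r_n}$ into $\O^{(\mu)}(\l_n)$.

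To verify the unit $V\to(\Ind V)^{\r_n}$ is an isomorphism, I would use the PBW identification $\Ind V\cong U(\r_n^-)\otimes V$ and induct on PBW-degree. If $w\in(\Ind V)^{\r_n}$ has nonzero top symbol $\bar w\in S^k(\r_n^-)\otimes V$ with $k>0$, then choosing $r\in\r_n$ that pairs non-trivially with a relevant factor produces, under the non-degenerate pairing $\r_n\times\r_n^-\to\C$ induced by an invariant form on $\g$, a nonzero element of $S^{k-1}(\r_n^-)\otimes V$; this contradicts $\r_n$-invariance of $w$, forcing $w\in V$.

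The main obstacle is the counit $\Ind(M^{\r_n})\to M$, which breaks into a generation claim $M=U(\r_n^-)\cdot M^{\r_n}$ and a freeness claim $U(\r_n^-)\otimes M^{\r_n}\isoto M$. Here the hypothesis $\l=\g^{\mu_n}$ is indispensable: identifying $\mu_n$ with an element $H\in\h$ via a non-degenerate invariant form gives $\alpha(H)\ne 0$ for every root $\alpha$ of $\r$. Set $Z:=H\otimes t^n$; then $Z-\mu_n(H)$ acts locally nilpotently on $M$, while the identity $[H\otimes t^n,\,\g\otimes t^i]=[H,\g]\otimes t^{n+i}=0$ for $i\ge 1$ shows that $Z$ is central modulo $\g\otimes t^n$. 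On a $Z$-eigenvector one has $[Z,x_\alpha]=\alpha(H)\,x_\alpha\otimes t^n$, which can be inverted since $\alpha(H)\ne 0$, enabling $(x_\alpha\otimes t^n)$-actions to be expressed through the commutator. Inducting up the filtration $M=\bigcup_k\ker(Z-\mu_n(H))^k$ should then produce, layer by layer, enough $\r_n$-invariants to exhaust $M$, giving surjectivity; freeness is a PBW-symbol argument analogous to the one for the unit. The delicate step is the generation claim: the naive strategy of forming $N:=U(\r_n^-)M^{\r_n}$ and arguing $M/N=0$ via mere existence of $\r_n$-invariants in $\O(\g_n)$ fails, because the image of $M^{\r_n}$ in $M/N$ is trivially zero, so any $\r_n$-invariant of $M/N$ is genuinely new, and one needs the finer inductive scheme driven by $\alpha(H)\ne 0$ to actually recapture all of $M$ from its $\r_n$-invariants.
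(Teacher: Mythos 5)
Your framing via Frobenius reciprocity and the unit/counit of the adjunction matches the paper's, and you correctly identify the counit (equivalently, generation $M = U(\r_n^-)\cdot M^{\r_n}$ together with freeness) as the hard direction. But the proposal has genuine gaps in both the unit and counit arguments, and it omits what the paper identifies as the decisive technical input: exactness of $(\bullet)^{\r_n}$, which the paper derives from a central character computation.

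On the unit: the PBW-symbol argument as written never invokes the hypothesis $\alpha(H)\neq 0$ for $\alpha\in\Phi(\r)$, yet without it the claim is false. For example with $\g=\sl_2$, $\p=\b$, $\r=\C e$, and $V=\C_\lambda$ with $\lambda\in\Z_{\ge 0}$, the vector $f^{\lambda+1}\otimes 1\in\Ind V$ is $\r$-invariant with top symbol in degree $\lambda+1>0$. The point is that "choosing $r$ that pairs non-trivially with a factor" does not by itself produce a nonzero element of $S^{k-1}(\r_n^-)\otimes V$: the bracket $[e_\alpha\otimes t^i,\,f_\alpha\otimes t^j]=h_\alpha\otimes t^{i+j}$ lands in $\h_n$, and its action on the remaining tensor factors contributes a scalar that may well vanish. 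Detecting non-vanishing requires tracking the $\mu_n$-eigenvalues carefully, and that is exactly where the hypothesis $\l=\g^{\mu_n}$ must enter. The paper handles this by proving (Proposition~\ref{highestweightmodinvariants}) that for indecomposable $M$ one has $M^{\r_n}=\bigoplus_{\nu\in\Xi_M}M^\nu$ for an explicit coset $\Xi_M\in\h^*/\C\Phi_{\mu_n}$, so that for a Verma $M_\lambda=\Ind V$ the invariants are precisely $V$ because no weight of $U(\r_n^-)$ other than $0$ lies in $\C\Phi_{\mu_n}$.

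On the counit: you correctly observe that the naive argument "if $M/N\neq 0$ then it has $\r_n$-invariants which lift" fails, but the "finer inductive scheme driven by $\alpha(H)\neq 0$" is not supplied. This is the heart of the matter, and the $Z$-eigenspace filtration you sketch does not obviously close the gap: the layers $\ker(Z-\mu_n(H))^k$ are not $\g_n$-submodules, and the commutator manipulation with $Z$ tells you how $x_\alpha\otimes t^n$ acts, not how to produce $\r_n$-invariants. What the paper actually does is different in kind: it first establishes that $(\bullet)^{\r_n}$ is \emph{exact} on $\O^{(\mu)}(\g_n)$ (Corollary~\ref{functorexactness}), which then makes the naive inductive argument over a highest-weight filtration work after all, since exactness guarantees that $\r_n$-invariants in a quotient lift. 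Exactness in turn is derived from Theorem~\ref{vermaexttheorem}, which describes when two highest-weight central characters coincide, and that theorem requires the Ra{\"i}s--Tauvel description of $S(\g_n)^{\g_n}$ and the generators $\omega(\partial^{(k)}p_j)$ of $Z(\g_n)$. None of this machinery appears in your proposal, and it is not optional: the central character analysis is what converts the non-degeneracy of $\mu_n$ on $\Phi(\r)$ into a constraint on which weight spaces can host $\r_n$-invariants. Without a substitute for it, neither the unit nor the counit argument closes.
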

In the case where $\g^{\mu_n}$ is a standard Levi subalgebra, the theorem allows us to reduce the study of $\O^{(\mu)}(\g_n)$ to the case where $\mu_n$ is supported on the centre of $\g$. Another simple reduction (Lemma~\ref{L:supportonthecentreequiv}) impels us to focus on the case $\mu_n = 0$.

Theorem~\ref{T:introparabolic} was inspired by a result of Friedlander--Parshall in modular representation theory of Lie algebras \cite[Theorem~3.2]{FP}. Using the fact that modules in $\O(\g_n)$ admit finite filtrations with highest weight sections (Lemma~\ref{finitefiltration}) the proof quickly reduces to showing that $(\bullet)^{\r_n}$ is exact. This is the hardest part of the proof, and requires a careful study of central characters of highest weight modules (Theorem~\ref{vermaexttheorem}), which ultimately depends on our description of the centre of $U(\g_n)$ given in Section~\ref{ss:centreofenvelopingalg}. In particular, in comparison to the Takiff case treated in \cite[\S4]{Ch} the description of the central elements given by \cite{RT} is more difficult to work with.

In Section~\ref{S:twisting} we remove the requirement in Theorem~\ref{T:introparabolic} that $\l$ is a standard Levi subalgebra; this is achieved with the use of twisting functors. These were first introduced by Arkhipov \cite{Ar} and were applied to category $\O$ for $\g$ by Andersen--Stroppel \cite{AS}. They were used in the context of Takiff Lie algebras by the first author in \cite{Ch}, whilst proving a similar reduction to that of the present paper. The main result of Section~\ref{S:twisting} is Theorem~\ref{maintwistingtheorem}, which implies, together with Proposition \ref{P:weylgroupstandardparabolics}, the following:
\begin{theorem}
\label{T:introtwisting}
Every Jordan block of $\O(\g_n)$ is equivalent to a Jordan block $\O^{(\mu)}(\g_n)$ such that $\g^{\mu_n}$ is a standard Levi subalgebra.
\end{theorem}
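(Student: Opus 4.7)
The plan is to deduce the statement by combining the two stated ingredients. Given any Jordan block $\O^{(\mu)}(\g_n)$ with $\mu \in (\h_n^{\ge 1})^*$, set $\mu_n := \mu|_{\h \otimes t^n}$, viewed as an element of $\h^*$. Proposition~\ref{P:weylgroupstandardparabolics} should assert that some $W$-translate of $\mu_n$ has standard Levi centraliser, a reflection of the classical fact that every Levi subalgebra of $\g$ containing $\h$ is $W$-conjugate to a standard one. Pick such $w \in W$; then $\g^{w(\mu_n)}$ is a standard Levi. The natural action of $W$ on $\h$ extends tensor-wise to an action on $\h_n$ and hence dually on $(\h_n^{\ge 1})^*$; writing $w \cdot \mu$ for the image of $\mu$, one has $(w \cdot \mu)_n = w(\mu_n)$, so the Jordan block $\O^{(w \cdot \mu)}(\g_n)$ has the required form.

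It then suffices to produce an equivalence $\O^{(\mu)}(\g_n) \simeq \O^{(w \cdot \mu)}(\g_n)$, which is the content of Theorem~\ref{maintwistingtheorem}. I would prove that theorem by adapting Arkhipov's twisting functors \cite{Ar} (used in category $\O$ for $\g$ by Andersen--Stroppel \cite{AS}, and in the Takiff setting in \cite{Ch}) to the truncated current algebra. For each simple reflection $s_\alpha$ one defines a twisting functor $T_\alpha$ on $\g_n$-modules via Ore localisation of $U(\g_n)$ at an appropriately chosen element (in the Takiff case one uses $f_\alpha \otimes 1$ together with its partners $f_\alpha \otimes t^i$ to accommodate all $n+1$ copies of the root space), and for a general $w$ sets $T_w := T_{\alpha_1} \circ \cdots \circ T_{\alpha_k}$ for a reduced expression $w = s_{\alpha_1}\cdots s_{\alpha_k}$.

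The main obstacle will be showing that $T_\alpha$ is exact on $\O(\g_n)$ and restricts to an equivalence $\O^{(\mu)}(\g_n) \isoto \O^{(s_\alpha \cdot \mu)}(\g_n)$. The standard strategy is to compute $T_\alpha$ explicitly on Verma modules, where the weight structure and central character can be tracked directly, and then bootstrap to arbitrary modules in $\O(\g_n)$ using the finite filtrations by highest weight sections furnished by Lemma~\ref{finitefiltration}. Compatibility with the Jordan block decomposition is read off from the shift in generalised eigenvalues of $\h \otimes t^i$ on twisted Vermas, which must match the prescribed $W$-action on $(\h_n^{\ge 1})^*$. Once this equivalence is established for each simple reflection, composing along a reduced expression of $w$ delivers Theorem~\ref{maintwistingtheorem} and, together with the proposition above, the stated theorem.
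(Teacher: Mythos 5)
Your overall strategy—producing $w \in W$ with $\g^{w(\mu_n)}$ a standard Levi via Proposition~\ref{P:weylgroupstandardparabolics}, then composing twisting functors along a reduced expression to produce the equivalence of Jordan blocks—matches the paper's skeleton. However there is a genuine gap in the middle.

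You assert that each $T_\alpha$ ``restricts to an equivalence $\O^{(\mu)}(\g_n) \isoto \O^{(s_\alpha\cdot\mu)}(\g_n)$'' and then compose along an arbitrary reduced expression. This is not true without an extra hypothesis. Even in the classical case $n=0$, twisting functors are right exact but not equivalences of category $\O$; nothing in your sketch explains what makes the situation better here. In the truncated setting the functor $T_\alpha$ is an equivalence of Jordan blocks \emph{only} under the hypothesis $\mu(h_{\alpha,n}) \ne 0$ (Theorem~\ref{maintwistingtheorem}). The mechanism is that when $\mu(h_{\alpha,n}) \ne 0$ the relevant $(\sl_2)_n$-Verma modules in the $\mu$-block become simple (ultimately via the parabolic equivalence of Theorem~\ref{maintheorem} reducing to the torus), which is what allows $T_\alpha$ to be inverted by $G_\alpha$. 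Consequently, when you compose $T_{\alpha_k} \cdots T_{\alpha_1}$, you must check at every intermediate stage that $\bigl((s_{\alpha_{i-1}}\cdots s_{\alpha_1})\mu\bigr)(h_{\alpha_i,n}) \neq 0$; otherwise one of the factors fails to be an equivalence. This is not automatic from the mere existence of a $W$-translate with standard Levi centraliser, and it is exactly what the second (``minimal length'') clause of Proposition~\ref{P:weylgroupstandardparabolics} is there to guarantee. Your sketch cites only the first clause and so leaves this step unjustified. To repair the argument: choose $w$ of minimal length among all $w'$ with $\g^{w'(\mu_n)}$ standard, fix a reduced expression $w = s_{\alpha_k}\cdots s_{\alpha_1}$, invoke the chain of nonvanishings furnished by the proposition, and then apply Theorem~\ref{maintwistingtheorem} one simple reflection at a time.
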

Together with Theorem~\ref{T:introparabolic} this allows us to reduce the study of Jordan blocks $\O^{(\mu)}(\g_n)$ to the case where $\mu_n = 0$. The proof of Theorem~\ref{T:introtwisting} is similar to \cite[Theorem~5.7]{Ch} however several of the proofs, especially the proofs of Lemmas \ref{homconstruction} and \ref{maintwistingpartc}, are significantly more complicated. As such we provide detailed arguments for all of the proofs which are different, and refer the reader to the appropriate part of {\it op. cit.} otherwise. \vspace{2pt}

In Section~\ref{S:compmult} we address the problem of defining composition multiplicities and computing multiplicities of simples in Vermas. We begin by explaining how these numbers are defined and interpreting them in terms of composition series, which is all very similar to \cite[\textsection 6.1]{Ch}, except that the structure of the filtrations considered in Lemma \ref{compmultalternatedefn} is more complicated than the analogus filtrations in the Takiff case. Finally we focus on the blocks $\O^{(\mu)}(\g_n)$ with $\mu_n = 0$. The main result of the section is Corollary~\ref{C:reducingthelevel} which is a precise version of Theorem~\ref{T:intromain}.

\subsection*{Acknowledgements} 
Both authors would like to thank Simon Goodwin for many useful conversations. The first author is grateful to the EPSRC for studentship funding, and the research of the second author is supported by a UKRI FLF, grant numbers MR/S032657/1, MR/S032657/2, MR/S032657/3.

\section{Preliminaries}
\label{S:prelims}
In this paper all vector spaces, algebras and algebraic varieties will be defined over $\C$. Unadorned tensor products are taken over $\C$.

\subsection{Reductive Lie algebras and truncated currents}

From henceforth we fix a reductive algebraic group $G$ of rank $r$, with Lie algebra $\g$, and a choice of maximal torus $\h \subseteq \g$. The Weyl group $N_G(\h) / C_G(\h)$ will be denoted $W$. Let $\Phi \subseteq \h^*$ be the root system of $\g$ and let $\Delta$ be a choice of simple roots for $\Phi$, which give a set of positive roots $\Phi^+ = \Phi \cap \sum_{\alpha \in \Delta} \Z_{\ge 0} \alpha$. For each root $\alpha \in \Phi$ we have a one dimensional root space $\g_\alpha$ and for positive roots $\alpha$ we fix a choice of triple $(e_\alpha, h_\alpha, f_\alpha)$ such that $e_\alpha \in \g_\alpha$, $f_\alpha \in \g_{-\alpha}$, and $h_\alpha := [e_\alpha, f_\alpha]$, satisfying $\alpha(h_\alpha) = 2$.

These data give us a triangular decomposition
\begin{eqnarray}
\g &=& \n^- \oplus \h \oplus \n^+,
\end{eqnarray}
where $\n^\pm :=  \bigoplus_{\alpha \in \Phi^+} \g_{\pm\alpha}$. We also write $\mathfrak{b} = \h \oplus \n^+$ for the corresponding Borel subalgebra of $\g$. 

For $n \geq 1$ we consider the {\it truncated current algebra} $\g_n := \g \otimes \C[t]/(t^{n+1})$. For any subalgebra $\mathfrak{s} \subseteq \g$, have a natural embedding $\mathfrak{s}_n \subseteq \g_n$ of truncated currents. We make the notation $\mathfrak{s}_n^i := \mathfrak{s}\otimes t^i \subseteq \g_n$. This gives a grading $\mathfrak{s}_n = \bigoplus_{i=0}^n \mathfrak{s}_n^i$. We write $\mathfrak{s}_n^{\ge m}$ for the sum of the graded pieces of degree $m,m+1,...,n$.

 For $x \in \g$ and $\alpha \in \Phi^+$ then we make the notation
\begin{eqnarray}
\label{e:introduceehf}
\begin{array}{rcl}
x_i &:=& x \otimes t^i, \\ 
e_{\alpha, i} &:=& e_\alpha \otimes t^i,\\
f_{\alpha, i} &:=& f_{\alpha}  \otimes t^i,\\
h_{\alpha, i}&:=& h_\alpha \otimes t^i
\end{array}
\end{eqnarray}
We will often need to consider linear functions of $\g_n$ and $\h_n$. If $\lambda \in \h_n^*$ then we write
\begin{eqnarray}
\label{e:introducelambdarestricted}
\begin{array}{rcl}
\lambda_i & :=&  \lambda|_{\h_n^i}, \\
\lambda_{\ge i} &:=& \lambda|_{\h_n^{\ge i}},
\end{array}
\end{eqnarray}
 for $i=0,...,n$, and we often view $\lambda_i$ as an element of $\h^*$ via the obvious identification $\h = \h_n^i$. If $\lambda \in \h_n^*$ then we can also view it as an element of $\g_n^*$ by setting $\lambda(\n_n^{\pm}) = 0$.

\subsection{The symmetric invariants and the centre of the enveloping algebra}
\label{ss:centreofenvelopingalg}

We write $S(\g_n)$ and $U(\g_n)$ for the symmetric algebra and the universal enveloping algebra of $\g_n$, respectively. The algebra $U(\g_n)$ is equipped with the Poincar{\'e}--Birkhoff--Witt (PBW) filtration, and the associated graded algebra is $\gr U(\g_n) = S(\g_n)$ the symmetric algebra on $\g_n$. In the present section we describe the centre $Z(\g_n)$ of the enveloping algebra and its semi-classical limit.

The adjoint representation of $\g_n$ extends uniquely to an action of $\g_n$ on both $S(\g_n)$ and $U(\g_n)$ by derivations \cite[Proposition~2.4.9]{D}. The centre $Z(\g_n)$ is equal to $U(\g_n)^{\g_n}$ and the identification $\gr U(\g_n) = S(\g_n)$ is $\g_n$-equivariant. The invariant algebra $S(\g_n)^{\g_n}$ was first described by Ra{\"i}s--Tauvel \cite{RT}, as we now recall.


We define a series of (vector space) endomorphisms $\partial^{(0)},..., \partial^{(n)}$ of $S(\g_n)$ by putting $\partial^{(0)}$ equal to the identity map, and then inductively defining
\begin{eqnarray}
\label{e:HSder1}
& & \partial^{(k)}(x_j) = \left\{\begin{array}{cl} \binom{j}{k} x_{j-k} & \text{ if } j \ge k,\\ 0 & \text{ otherwise}, \end{array} \right.\\
\label{e:HSder2}
& & \partial^{(k)}(fg) = \sum_{i+j = k} \partial^{(i)}(f) \partial^{(j)}(g).
\end{eqnarray}
for $x \in \g$ and $f,g \in S(\g_n)$. These endomorphisms will be used to construct the basic invariant generators introduced in {\it op. cit.} We remark that the family of operators $(\partial^{(0)},...,\partial^{(n)})$ which we define here and an instance of higher order derivation \cite{SH}, but we shall not use this formalism.

Recall Chevalley's restriction theorem, which states that $S(\g)^G = S(\g)^\g \cong S(\h)^W$. Furthermore the Chevalley--Sheppard--Todd theorem implies that $S(\g)^\g$ is a graded polynomial algebra generated by $\rank(\g)$ homogeneous elements. Write $p_1,...,p_r$ for a choice of such elements.

The adjoint representation of $\g_n$ stabilises $\g_n^n$ and the action factors through the map $\g_n \onto \g_n / \g_n^{\ge 1} = \g_0 = \g$. Therefore we have a natural inclusion $S(\g)^\g \hookrightarrow S(\g_n^n)^{\g_n}$. Abusing notation we view $p_1,...,p_r$ as elements of $S(\g_n)^{\g_n}$.
\begin{theorem} \cite[\textsection 3]{RT}
\label{RT}
The invariant algebra $S(\g_n)^{\g_n}$ is a polynomial ring generated by $(n+1)r$ elements
\begin{eqnarray}
\label{e:symmetricinvariants}
\{\partial^{(k)} p_j \mid j=1,...,r, \ k = 0,...,n\}.
\end{eqnarray}
\end{theorem}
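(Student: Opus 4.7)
The plan is to verify three separate claims: first, that each $\partial^{(k)}p_j$ lies in $S(\g_n)^{\g_n}$; second, that the $(n+1)r$ elements $\{\partial^{(k)}p_j : 1 \le j \le r,\, 0 \le k \le n\}$ are algebraically independent; third, that they generate the full invariant subalgebra. The first two are relatively routine, whereas the third is where the essential content lies and is where I would expect the main difficulty.

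For invariance, I would first observe that the subspace $\g_n^n = \g \otimes t^n$ is a $\g_n$-submodule of $\g_n$ on which the adjoint action factors through the quotient $\g_n \onto \g_0 = \g$. The resulting inclusion $S(\g) \cong S(\g_n^n) \into S(\g_n)$ is therefore $\g_n$-equivariant, so each $p_j$ is $\g_n$-invariant. I would then establish the Leibniz-type commutation identity
\[
\partial^{(k)}(y_i\cdot f) \;=\; \sum_{m=0}^{k}\binom{i}{m}\, y_{i-m}\cdot\partial^{(k-m)}f, \qquad y\in\g,\ 0\le i\le n,\ f\in S(\g_n),
\]
checking it on the generators $f = x_j$ by Vandermonde's identity $\sum_{m+c=k}\binom{i}{m}\binom{j}{c}=\binom{i+j}{k}$ and extending to all of $S(\g_n)$ by induction on polynomial degree, using the Leibniz rule \eqref{e:HSder2}. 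Substituting an invariant $f = p_j$ kills the left-hand side, and induction on $k$ then forces $y_i \cdot \partial^{(k)} p_j = 0$ for all $y \in \g$ and $0 \le i \le n$.

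For algebraic independence, I would use the $t$-grading on $S(\g_n)$ in which $x_i$ has degree $i$. Each $\partial^{(k)}$ is homogeneous of $t$-degree $-k$, so $\partial^{(k)} p_j$ is $t$-homogeneous of degree $n d_j - k$, where $d_j := \deg p_j$. The top polynomial-degree components live in the graded piece $S(\g_n^n) \cong S(\g)$, and an inspection of these top forms translates any putative algebraic relation among the $\partial^{(k)} p_j$ into one among the $p_j$, which is then ruled out by the classical algebraic independence of $p_1, \ldots, p_r$ in $S(\g)^\g$.

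The hard part is generation. I would argue by a dimension/slice comparison. First, one establishes the dimension equality $\dim(\g_n^* /\!/ G_n) = (n+1)r$, obtained as in Ra{\"i}s--Tauvel by showing that a generic coadjoint stabiliser in $\g_n^*$ has dimension exactly $(n+1)r$. Combined with algebraic independence, the inclusion $A := \C[\partial^{(k)} p_j] \subseteq S(\g_n)^{\g_n}$ is then a finite extension of normal domains of equal Krull dimension. To promote this to an equality, I would construct a Kostant-type slice $\chi_n \subseteq \g_n^*$ of dimension $(n+1) r$ meeting every regular $G_n$-orbit transversally in a single point, and verify that the restriction $A \to \C[\chi_n]$ is already an isomorphism; this forces $A = S(\g_n)^{\g_n}$. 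The construction of $\chi_n$ and the regularity analysis for $\g_n^*$ constitute the technical heart of the Ra{\"i}s--Tauvel argument.
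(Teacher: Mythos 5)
The paper does not supply a proof of this theorem; it is imported verbatim from Ra\"is--Tauvel, so there is no in-text argument to compare against. Judged on its own terms, your outline is reasonable in shape but the invariance step contains a concrete error. The proposed commutation identity
\[
\partial^{(k)}(y_i\cdot f) \;=\; \sum_{m=0}^{k}\binom{i}{m}\, y_{i-m}\cdot\partial^{(k-m)}f
\]
is \emph{false} on the generators $x_j$ of $S(\g_n)$ precisely when $n < i+j \le n+k$. In that range the left-hand side vanishes, since $[y_i, x_j] = [y,x]\otimes t^{i+j} = 0$ once $i+j > n$; the right-hand side, evaluated naively by Vandermonde, gives $\binom{i+j}{k}[y,x]_{i+j-k}$, which survives because $i+j-k\le n$. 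For a concrete instance take $\g = \sl_2$, $n = 1$, $y = f_\alpha$, $x_j = e_{\alpha,1}$, $i = k = 1$: the left-hand side is $\partial^{(1)}([f_\alpha,e_\alpha]\otimes t^2) = 0$, while the right-hand side is $f_{\alpha,1}\cdot e_{\alpha,0} + f_{\alpha,0}\cdot e_{\alpha,1} = -2h_{\alpha,1}\neq 0$. The truncation $t^{n+1}=0$ is exactly what breaks the Vandermonde computation, and your Leibniz induction then propagates this failure rather than avoiding it. The conclusion $\ad(y_i)\partial^{(k)}p_j = 0$ is of course true, but it cannot be extracted from this identity; one must exploit the specific position $p_j\in S(\g_n^n)$ (on which only the degree-zero part of the adjoint action is nontrivial, and onto which $\partial^{(k)}$ interacts with the grading in a controlled way), which is effectively what Ra\"is--Tauvel's Lemma~3.2 organises.

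Two smaller remarks. Your algebraic-independence sketch via the $t$-grading is in the right spirit, though \eqref{e:propertiesofdp} makes the leading-term analysis cleaner than a raw inspection of ``top forms''. On generation, note that a finite extension of normal domains of equal Krull dimension need not be an equality ($\C[x^2]\subseteq\C[x]$ is a counterexample); one also needs the fraction fields to agree, which is precisely what a Kostant-type slice would deliver. You flag this as the hard part, so I regard it as under-specified rather than incorrect, but the logical gap should be stated honestly.
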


Let $d^{(k)} : S(\g_n^n) \to S(\g_n)$ be the partial derivative
$\sum_x x_{n-k} \frac{d}{dx_n}$
where the sum is taken over a basis for $\g$. Then it follows straight from the definitions \eqref{e:HSder1}, \eqref{e:HSder2} that there exist elements $q_j^{(k)} \in S(\g_n^{\ge n-k+1})$ such that
\begin{eqnarray}
\label{e:propertiesofdp}
\partial^{(k)}p_j = d^{(k)}p_j + q_j^{(k)} \text{ for } j=1,...,r, \ k = 0,...,n.
\end{eqnarray}
We refer the reader to \cite[Lemma~3.2(ii)]{RT} for the proof of \eqref{e:propertiesofdp}. The next observation follows directly from the definitions, and we record it as a lemma for later use.
\begin{lemma}
\label{L:dandhinvariants}
The map $d^{(k)}$ sends $\ad(\h)$-invariants to $\ad(\h)$-invariants. $\hfill\qed$
\end{lemma}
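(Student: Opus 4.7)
The plan is to choose a basis of $\g$ adapted to the $\h$-action and check that each summand in the formula for $d^{(k)}$ individually commutes with the action of $\ad(\h)$ on $S(\g_n)$. More precisely, I would pick a basis $\{x_a\}$ of $\g$ consisting of $\ad(\h)$-weight vectors, so that $[h,x_a] = \alpha_a(h) x_a$ for some $\alpha_a \in \h^*$ (with $\alpha_a = 0$ when $x_a$ lies in a fixed basis of $\h$). With respect to this basis we have
\begin{equation*}
d^{(k)} = \sum_a x_{a,n-k}\,\frac{\partial}{\partial x_{a,n}}.
\end{equation*}

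Next I would analyse weights. Since the $\h$-grading on $\g_n$ is induced by $\h = \h \otimes 1 \subseteq \h_n$ acting via the adjoint action (and this action preserves each graded piece $\g_n^i$), the generator $x_{a,j}$ has $\h$-weight $\alpha_a$, so the partial derivative $\frac{\partial}{\partial x_{a,n}}$ lowers $\h$-weight by $\alpha_a$. Consequently the operator $x_{a,n-k}\,\frac{\partial}{\partial x_{a,n}}$ preserves $\h$-weights for every $a$, and so does their sum $d^{(k)}$. Any $\ad(\h)$-invariant $f \in S(\g_n^n)$ is a sum of weight-zero homogeneous elements, and its image under $d^{(k)}$ remains a sum of weight-zero elements, i.e.\ is $\ad(\h)$-invariant.

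There is no real obstacle here, since the only subtlety is the compatibility of the partial derivatives with the $\h$-weight grading, and that follows immediately once a weight basis is fixed; one could also phrase the argument basis-free by observing that $d^{(k)}$ is the composition of the $\h$-equivariant "shift" $\g_n^n \to \g_n^{n-k}$, $x_n \mapsto x_{n-k}$, extended as a derivation from $S(\g_n^n)$ into $S(\g_n)$ via the identity element of $\g \otimes \g^*$, which is manifestly $\h$-invariant.
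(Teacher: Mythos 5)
Your proof is correct and follows essentially the same route as the paper's own (omitted) argument: identify the $\ad(\h)$-invariants with the zero weight space of $S(\g_n)$ and observe that $d^{(k)}$ preserves $\ad(\h)$-weight spaces, which you make explicit by writing $d^{(k)}$ in a weight basis and checking each summand $x_{a,n-k}\,\partial/\partial x_{a,n}$ raises and then lowers the weight by the same $\alpha_a$. The basis-free reformulation at the end, viewing $d^{(k)}$ as built from the $\h$-invariant identity tensor in $\g\otimes\g^*$, is a clean way to package the same observation.
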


Finally we give a description of the centre $Z(\g_n)$ of the enveloping algebra. There is an isomorphism of $\g_n$-modules $\omega : S(\g_n) \to U(\g_n)$ called the symmetrisation map \cite[\textsection 2.4]{D}. It is defined by the rule
\begin{eqnarray}
\label{e:omegadefn}
\omega : x^1\cdots x^m \mapsto \frac{1}{m!} \sum_{\sigma\in \Sf_m} x^{\sigma 1} \cdots x^{\sigma m}
\end{eqnarray}
where $x^1,...,x^m \in \g_n$ are any elements, so that $x^1\cdots x^m \in S(\g_n)$ is a monomial of degree $m$, and $\Sf_m$ denotes the symmetric group on $m$ letters. If $\gr : U(\g_n) \to S(\g_n)$ is the (non-linear) map defined by taking the top degree component with respect to the Poincar{\'e}--Birkoff--Witt filtration, then
\begin{eqnarray}
\label{e:omegaleftinverse}
\gr \circ \ \omega \text{ is equal to the identity mapping on each graded piece of } S(\g_n).
\end{eqnarray}

Since the isomorphism $\omega : S(\g_n)^{\g_n} \isoto U(\g_n)^{\g_n} = Z(\g_n)$ respects filtrations we can describe the $Z(\g_n)$.
\begin{corollary}
$Z(\g_n)$ is a polynomial algebra generated by $(n+1)r$ elements
\begin{eqnarray}
\label{e:centralgenerators}
\{\omega(\partial^{(k)} p_j) \mid j=1,...,r, \ k = 0,...,n\}.
\end{eqnarray}
\end{corollary}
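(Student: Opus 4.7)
The plan is to deduce the corollary directly from Theorem~\ref{RT} by lifting invariants through the symmetrisation map $\omega$ using the PBW filtration; the author's phrasing that ``$\omega$ respects filtrations'' already points to exactly this strategy.

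First I would collect the two properties of $\omega$ that will be used. Since $\omega$ is $\g_n$-equivariant, it restricts to a linear isomorphism $S(\g_n)^{\g_n} \isoto U(\g_n)^{\g_n} = Z(\g_n)$. Moreover, by \eqref{e:omegaleftinverse}, if $f \in S(\g_n)$ is homogeneous of degree $d$ then $\omega(f) \in U^{\le d}(\g_n)$ has top PBW symbol $\gr \omega(f) = f$. Applying this to the Ra{\"i}s--Tauvel generators, each central element $z_{j,k} := \omega(\partial^{(k)} p_j)$ sits in the appropriate filtered piece and has $\partial^{(k)} p_j$ as its top symbol.

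Next I would identify $\gr Z(\g_n)$ with $S(\g_n)^{\g_n}$. The inclusion $\gr Z(\g_n) \subseteq S(\g_n)^{\g_n}$ is automatic, since the adjoint action preserves the PBW filtration and the identification $\gr U(\g_n) = S(\g_n)$ is $\g_n$-equivariant. The reverse inclusion is provided by $\omega$ itself: any homogeneous $f \in S(\g_n)^{\g_n}$ of degree $d$ is the top symbol of $\omega(f) \in Z(\g_n) \cap U^{\le d}(\g_n)$, whence $f \in \gr Z(\g_n)$.

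With the equality $\gr Z(\g_n) = S(\g_n)^{\g_n}$ in hand, the conclusion is a routine filtration-lifting argument. Let $A \subseteq Z(\g_n)$ be the subalgebra generated by the $z_{j,k}$; multiplicativity of top symbols together with Theorem~\ref{RT} shows that $\gr A$ contains every $\partial^{(k)} p_j$, hence $\gr A = S(\g_n)^{\g_n} = \gr Z(\g_n)$, which by induction on PBW degree forces $A = Z(\g_n)$. For algebraic independence, any polynomial relation among the $z_{j,k}$ would, after passing to top symbols, yield a polynomial relation among the $\partial^{(k)} p_j$ in $S(\g_n)$, contradicting Theorem~\ref{RT}. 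No serious obstacle is expected; the one step that requires any care is the identification $\gr Z(\g_n) = S(\g_n)^{\g_n}$, which is precisely what the existence of the equivariant section $\omega$ supplies.
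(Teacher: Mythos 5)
Your proposal is correct and follows essentially the same route as the paper: use the $\g_n$-equivariant symmetrisation map $\omega$ together with the PBW filtration to transport Theorem~\ref{RT} from $S(\g_n)^{\g_n}$ to $Z(\g_n)$, concluding that the proposed generators generate by the $\gr A = \gr Z(\g_n) \Rightarrow A = Z(\g_n)$ argument, and that they are algebraically independent because a relation would descend, via top symbols, to a relation among the $\partial^{(k)} p_j$. The paper's own proof is terser (it simply asserts $\gr Z = \gr Z(\g_n)$ and the rest), and your write-up supplies the justifications it leaves implicit — in particular the identification $\gr Z(\g_n) = S(\g_n)^{\g_n}$ via the equivariance and bijectivity of $\omega$.
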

\begin{proof}
Let $Z \subseteq Z(\g_n)$ be the subalgebra generated by the elements \eqref{e:centralgenerators}. Since the inclusion $\gr Z \subseteq \gr Z(\g_n)$ is an equality it follows that the inclusion $Z \subseteq Z(\g_n)$ is also an equality. If these elements admit a non-trivial algebraic relation then taking $\gr$ and using \eqref{e:omegaleftinverse} we see that the elements \eqref{e:symmetricinvariants} would admit a non-trivial relation. By Theorem~\ref{RT} we see that  \eqref{e:centralgenerators} are algebraically independent.
\end{proof}

\section{Category $\calO$ for Truncated Current Lie algebras}
\label{S:catO}
\subsection{Definition and first properties of $\calO$}

We begin by stating the definition of category $\calO$ for $\g_n$.

\begin{definition}
\label{catOdef}
The category $\calO(\g_n)$ is the full subcategory of $U(\g_n)$-Mod with objects $M$ satisfying the following:

\begin{enumerate}
\setlength{\itemsep}{4pt}
    \item[$(\calO1)$] $M$ is finitely generated.
    \item[$(\calO2)$] $\h_n^0$ acts semisimply on $M$.
    \item[$(\calO3)$] $\h_n^{\ge 1}\oplus \n_n^+$ acts locally finitely on $M$.
\end{enumerate}
Note that $\O(\g_0)$ is nothing but the Bernstein--Gelfand--Gelfand (BGG) category $\O$ for $\g_0 = \g$. 
\end{definition}

We refer the reader to \cite{H} for a fairly comprehensive introduction to the algebraic study of $\O$, and to \cite{HTT} for a discussion of the relationship between $\O$ and $\mathcal{D}$-modules on the flag variety.

It is not hard to see that $\calO(\g_n)$ is closed under submodules, quotients, and finite direct sums. Furthermore by ($\calO1)$ every $M \in \calO(\g_n)$ is noetherian, since $U(\g_n)$ is noetherian.

Let $M \in \calO(\g_n)$ and $\lambda \in \h^*$. Define the {\it weight space of weight $\lambda$} by
\begin{eqnarray}
M^\lambda = \{ v\in M \mid h_0 v = \lambda(h) v\text{ for all } h\in \h\}.
\end{eqnarray}
By ($\O2$) we have $M = \bigoplus_{\lambda \in \h^*} M^\lambda$ and this is a module grading of $M$, if we equip $U(\g_n)$ with its natural grading by the root lattice. The elements of $M^\lambda$ are called \emph{weight vectors of weight $\lambda \in \h^*$}. If $m \in M^\lambda$ is a weight vector satisfying $\n_n^+ \cdot m = 0$ then we say that $m$ is a {\it maximal vector of weight $\lambda$}.

Now let $\lambda \in \h_n^*$ and recall that $\lambda_i := \lambda|_{\h_n^i}$. We say that $m \in M^{\lambda_0}$ is a \emph{highest weight vector of weight $\lambda$} if $m$ is maximal of weight $\lambda$ and 
\begin{eqnarray}
h \cdot m = \lambda(h)m \text{ for all } h\in \h_n.
\end{eqnarray}

The following basic properties of weight spaces of $M \in \O(\g_n)$ can be proven using the same argument as in BGG category $\O$, see \cite[\S1.1]{H}:
\begin{eqnarray}
\label{fdweightspaces}
\begin{array}{l}
\dim(M^\lambda) < \infty \text{ for all } \lambda \in \h^*,\vspace{6pt}\\
\{\lambda \in \h^* : M^{\lambda} \neq 0\} \subseteq \bigcup_{\lambda \in I} (\lambda - \Z_{\geq 0}\Phi^+) \text{ for some finite subset }I \subseteq \h^*.
\end{array}
\end{eqnarray}

If $m\in M \in \O(\g_n)$ is a maximal vector then we can find a highest weight vector in $U(\h_n) v$ thanks to $(\O3)$. This proves the following result.
\begin{lemma} \cite[Corollary~3.3]{Ch}
\label{maxhwcorollary}
Suppose that $M \in \calO$ admits a nonzero maximal vector of weight $\lambda \in \h^*$ in $M$. Then  $M$ admits a nonzero highest weight vector of weight $\mu$ for some $\mu \in \h_n^*$ satisfying $\mu_0 = \lambda$. $\hfill\qed$
\end{lemma}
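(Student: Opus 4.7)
The plan is to upgrade the given maximal vector to a simultaneous eigenvector for $\h_n^{\ge 1}$, using that $\h_n$ is abelian and that the weight space $M^\lambda$ is finite dimensional.

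First I introduce the subspace
\begin{eqnarray*}
V := \{ w \in M^\lambda \mid \n_n^+ \cdot w = 0\} \subseteq M^\lambda.
\end{eqnarray*}
By the hypothesis $V \neq 0$, and by the first line of \eqref{fdweightspaces} the ambient space $M^\lambda$ is finite dimensional, so $V$ is finite dimensional.

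Next I verify that $\h_n^{\ge 1}$ stabilises $V$. Since $\h_n$ is abelian, $[\h_n^{\ge 1}, \h_n^0] = 0$, hence $\h_n^{\ge 1}\cdot M^\lambda \subseteq M^\lambda$. To check preservation of the condition $\n_n^+ \cdot w = 0$, take $w \in V$, $h \in \h_n^{\ge 1}$ and $e \in \n_n^+$. A direct computation using the commutation relations $[\h_n^i, \n_n^{+,j}] \subseteq \n_n^{+,i+j}$ gives
\begin{eqnarray*}
e(hw) = h(ew) + [e,h]w = [e,h]w,
\end{eqnarray*}
and since $[e,h] \in \n_n^{+,\ge 1} \subseteq \n_n^+$, the hypothesis $\n_n^+ w = 0$ forces $[e,h]w = 0$. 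Thus $hw \in V$.

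Now $\h_n^{\ge 1}$ is a finite dimensional abelian Lie algebra acting by commuting linear operators on the nonzero finite dimensional complex vector space $V$. Standard linear algebra (simultaneous triangularisation of commuting operators over $\C$) produces a nonzero common eigenvector $v' \in V$, with eigenvalues described by a linear functional $\chi \in (\h_n^{\ge 1})^*$. I then define $\mu \in \h_n^*$ by setting $\mu|_{\h_n^0} := \lambda$ (under the identification $\h = \h_n^0$) and $\mu|_{\h_n^{\ge 1}} := \chi$. By construction $v'$ is a highest weight vector of weight $\mu$, and $\mu_0 = \lambda$ as required.

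There is no real obstacle in this argument; the only nontrivial step is the stability of $V$ under $\h_n^{\ge 1}$, which follows cleanly from abelianness of $\h_n$ and the fact that $\ad(\h_n^{\ge 1})$ raises the $t$-degree on $\n_n^+$. Note that the local finiteness assumption $(\O3)$ is not explicitly invoked in this step, since the finite dimensionality of $V$ comes directly from $(\O2)$ via \eqref{fdweightspaces}.
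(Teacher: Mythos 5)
Your proof is correct and takes essentially the same approach as the paper: the paper's (terse) argument considers the finite-dimensional space $U(\h_n)\cdot v$ and extracts a common eigenvector, which is exactly what you do via the stable subspace $V$ of maximal vectors; your write-up simply makes explicit the check that $\h_n^{\ge 1}$ preserves maximal vectors. One small correction: your closing remark that finite dimensionality of $V$ "comes directly from $(\O2)$" and does not invoke $(\O3)$ is misleading — the finite dimensionality of weight spaces asserted in \eqref{fdweightspaces} is established using all three axioms (one needs local finiteness of $\h_n^{\ge 1}\oplus\n_n^+$ on a finite generating set, then freeness of $U(\g_n)$ over $U(\b_n)$), and the paper explicitly attributes the key finiteness to $(\O3)$.
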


\subsection{Highest weight modules}
\label{ss:hwmodules}
We say $M$ is a {\it highest weight module of weight $\lambda\in \h_n^*$} if $M$ is generated by a highest weight vector of weight $\lambda$. The following result on highest weight filtrations is analogous to the situation for BGG category $\O$, see \cite[Corollary 1.2]{H}. The proof is essentially the same as \cite[Lemma~3.4]{Ch}.
\begin{lemma}
\label{finitefiltration}
Let $M \in \calO(\g_n)$. Then $M$ has a finite filtration  $0 = M_0 \subseteq M_1 \subseteq \cdots \subseteq M_k = M$
such that each $M_{i+1}/M_i$ is a highest weight module. We call such a filtration a {\it highest weight filtration}. $\hfill\qed$
\end{lemma}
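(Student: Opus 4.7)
The plan is to mimic the standard argument for BGG category $\O$ (e.g. \cite[Theorem~1.11]{H}) which was already adapted in \cite[Lemma~3.4]{Ch} to the Takiff case; the present setting requires no new ideas beyond replacing $\h$ with $\h_n$ and being slightly more careful because $\b_n = \h_n \oplus \n_n^+$ is larger.

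First, using $(\O1)$ and $(\O2)$, I would choose a finite generating set consisting of weight vectors $v_1,\dots,v_m$, and put
\begin{eqnarray*}
N := U(\b_n)\cdot\vspan(v_1,\dots,v_m) = U(\h_n^{\ge 1}\oplus\n_n^+)\cdot\vspan(v_1,\dots,v_m).
\end{eqnarray*}
By $(\O3)$ each $v_i$ generates a finite dimensional $U(\h_n^{\ge 1}\oplus \n_n^+)$-submodule, so $N$ is a finite dimensional, $\b_n$-stable, weight-graded subspace of $M$ that generates $M$ as a $\g_n$-module. The proof will proceed by induction on $\dim N$.

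For the inductive step, since $N$ is finite dimensional and weight-graded, its set of weights has a maximal element with respect to the standard partial order on $\h^*$; any nonzero weight vector of maximal weight in $N$ is a maximal vector in the sense of Section~\ref{S:catO}. Apply Lemma~\ref{maxhwcorollary} inside the finite dimensional $\h_n$-stable space of maximal vectors of that weight to produce a nonzero highest weight vector $w \in N$ of some weight $\mu\in\h_n^*$. Then $M_1 := U(\g_n)\cdot w$ is a highest weight submodule of $M$, and $M/M_1 \in \O(\g_n)$ since $\O(\g_n)$ is closed under quotients.

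Now I would consider the image $\overline{N}$ of $N$ in $M/M_1$; it is $\b_n$-stable, generates $M/M_1$ as a $\g_n$-module, and its dimension strictly drops because $w\in N$ maps to zero. By induction hypothesis $M/M_1$ has a highest weight filtration, which lifts to a highest weight filtration of $M$ containing $M_1$ as its first step. The only subtle point, and the one I would take most care over, is the existence of the finite dimensional $\b_n$-stable generating space $N$: in contrast to the BGG case one has to know that $U(\h_n^{\ge 1})$ acts locally finitely (which is part of $(\O3)$), since $\h_n^{\ge 1}$ is no longer contained in $\b = \h\oplus \n^+$ in the classical sense. Once this is in place the remainder is formal, and the resulting filtration is finite because $\dim N$ is a non-negative integer that strictly decreases at each inductive step.
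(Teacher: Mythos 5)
Your argument is correct and follows essentially the same route as the paper's (omitted) proof, which is itself the same as \cite[Lemma~3.4]{Ch}: choose weight vector generators, form the finite dimensional $\b_n$-stable subspace $N$ using $(\O3)$, extract a highest weight vector of maximal weight via Lemma~\ref{maxhwcorollary}, and induct on $\dim N$ after passing to the quotient.
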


For $\lambda\in \h_n^*$ we define the {\it Verma module of weight $\lambda$} via
\begin{eqnarray}
M_{\lambda} := U(\g_n) \otimes_{U(\mathfrak{b}_n)} \C_{\lambda},
\end{eqnarray}
where $\C_{\lambda}$ is the one dimensional $U(\mathfrak{b}_n)$-module upon which $\h_n$ acts via $\lambda$, and $\n_n$ acts by 0. The Verma modules are the universal highest weight modules, in the sense that every highest weight module is a quotient of a Verma module.

They enjoy the following nice properties, generalising the classical case $n = 0$, see \cite[\textsection 1]{H}: 
\begin{enumerate}
\setlength{\itemsep}{4pt}
\item $\dim M_{\lambda}^{\lambda_0}= 1$, and hence $\dim M^{\lambda_0} = 1$ for every highest weight module of weight $\lambda\in \h_n^*$.
\item Every highest weight module $M$ admits a central character: for every $\lambda \in \h_n^*$ there is a homomorphism $\chi_\lambda : Z(\g_n) \to \C$ such that $z\cdot m = \chi_\lambda(z) m$ for all $z\in Z(\g_n)$ and all $m \in M$ where $M$ is a highest weight module, of weight $\lambda$.
\item $M_\lambda$ admits a unique maximal submodule and a unique simple quotient, which we denote $L_\lambda$.
\item Every simple object in $\O(\g_n)$ is isomorphic to precisely one of these simple modules (by Lemma~\ref{finitefiltration}). Thus the modules
$$\{L_{\lambda} \mid \lambda \in \h_n^*\}$$
give a complete set of representatives for the isomorphism classes of simple modules in $\O(\g_n)$.
\end{enumerate}

\subsection{Jordan decomposition for $\calO(\g_n)$}
The standard approach to studying modules in BGG category $\O$ is to consider modules with a fixed central character for $U(\g)$. This refinement is also useful in our more general setting (see Theorem~\ref{vermaexttheorem}), however as a first approximation we decompose $\O(\g_n)$ in terms of generalised eigenvalues for $\h_n^{\ge 1}$. 

Fix $M \in \O(\g_n)$ and $\mu \in (\h_n^{\ge 1})^*$. We define the {\it generalised eigenspace of eigenvalue $\mu$} via
$$M^{(\mu)} = \{m \in M \mid (h -\mu(h))^k m = 0 \text{ for all } k \gg 0, \ h \in \h_n^{\ge 1}\}.$$

The following result is a slight generalisation of \cite[Lemma~3.7]{Ch}, and we supply a sketch of the proof for the reader's convenience.
\begin{lemma}
\label{generalisedeigenvalues}
Every $M \in \calO(\g_n)$ admits a direct sum decomposition of $\g_n$-modules
\begin{eqnarray}
\label{e:Mdecomposes}
M = \bigoplus_{\mu \in (\h^*)^n} M^{(\mu)}.
\end{eqnarray}
\end{lemma}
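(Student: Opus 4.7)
The strategy is to first obtain a decomposition of $M$ as a vector space into joint generalised eigenspaces for the commutative Lie algebra $\h_n^{\ge 1}$, and then to show that each such eigenspace is a $\g_n$-submodule.

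For the vector-space decomposition, axiom $(\calO3)$ ensures that $\h_n^{\ge 1}$ acts locally finitely on $M$, so $M$ is the union of its finite-dimensional $\h_n^{\ge 1}$-stable subspaces. Since $\h_n^{\ge 1}$ is abelian, every such finite-dimensional subspace decomposes uniquely into joint generalised eigenspaces, and these decompositions are compatible under inclusion. Passing to the union, and using the identification $\h_n^{\ge 1} = \bigoplus_{i=1}^n \h \otimes t^i \cong \h^{\oplus n}$, one obtains a direct sum decomposition of vector spaces $M = \bigoplus_{\mu \in (\h^*)^n} M^{(\mu)}$.

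The crux is to verify that each $M^{(\mu)}$ is $\g_n$-stable. The key observation is that $\ad(h)$ is nilpotent on $\g_n$ for every $h \in \h_n^{\ge 1}$: indeed $\ad(h)$ maps $\g_n^j$ into $\g_n^{\ge j+1}$, so $\ad(h)^{n+1} = 0$. Fix $h \in \h_n^{\ge 1}$, $x \in \g_n$ and $m \in M^{(\mu)}$, and set $\lambda := \mu(h)$. A routine induction on $k$ yields the commutator identity of operators on $M$:
\begin{eqnarray*}
(h-\lambda)^k \cdot x = \sum_{l=0}^{k} \binom{k}{l} \ad(h)^l(x) \cdot (h-\lambda)^{k-l}.
\end{eqnarray*}
Only the terms with $l \le n$ survive by the nilpotence above, while $(h-\lambda)^{k-l} m$ vanishes once $k-l$ exceeds a threshold depending on $m$; choosing $k$ sufficiently large annihilates every term in the sum, showing that $xm$ lies in the generalised $\mu(h)$-eigenspace of $h$. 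Ranging over $h \in \h_n^{\ge 1}$ gives $xm \in M^{(\mu)}$, so each $M^{(\mu)}$ is a submodule. I expect the main obstacle to be this commutator manipulation, but once the nilpotence of $\ad(h)$ on $\g_n$ is in hand the computation is routine and follows the pattern of \cite[Lemma~3.7]{Ch}.
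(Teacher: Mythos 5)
Your proof is correct and follows essentially the same route as the paper's: both decompose $M$ as a vector space into joint generalised eigenspaces for the abelian algebra $\h_n^{\ge 1}$ and then verify that each summand is $\g_n$-stable using the nilpotence of $\ad(h)$ on $\g_n$ for $h \in \h_n^{\ge 1}$. The only cosmetic difference is that you obtain the vector-space decomposition from the local finiteness axiom $(\calO3)$, whereas the paper first passes to the finite-dimensional $\h$-weight spaces of $M$ and decomposes those; and you make the commutator identity explicit where the paper only alludes to a ``direct calculation'' with the root space decomposition.
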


\begin{proof}
Since $\h_n$ preserves the weight spaces of $M$, which are finite dimensional, it follows that each $M^\lambda$ decomposes into generalised eigenspaces for $\h_n^{\ge 1}$. Therefore $M$ admits a decomposition \eqref{e:Mdecomposes} and it suffices to show that each summand is a $\g_n$-module. This follows by a direct calculation, using the fact that $\g$ admits an eigenbasis for $\h$ (root space decomposition) and $\h_n^{\ge 1}$ acts nilpotently on $\g_n$.
\end{proof}

Now we define the {\it Jordan block of $\O(\g_n)$ of weight $\mu \in (\h_n^{\ge 1})^*$} to be the full subcategory $\calO(\g_n)$ whose objects are the modules $M$ such that $M = M^{(\mu)}$. We then have the following {\it Jordan decomposition}
\begin{eqnarray}
\label{calOdecomposition}
\calO(\g_n) = \bigoplus_{\mu \in (\h_n^{\ge 1})^*} \O^{(\mu)}(\g_n).
\end{eqnarray}

\begin{remark}
\label{R:Vermasinblocks}
It is not hard to see that if $\lambda \in \h_n^*$ and $\mu = \lambda|_{\h_n^{\ge 1}}$ then both $M_{\lambda}$ and $L_\lambda$ lie in $\O^{(\mu)}(\g_n)$ (see also \cite[Lemma~3.9]{Ch}).  Combining \eqref{calOdecomposition} with the fact that Verma modules have unique maximal submodules, and are therefore indecomposable, it follows that $L_{\lambda}$ cannot occur as a subquotient of $M_{\nu}$ unless $\lambda_{\ge 1} = \nu_{\ge 1}$

\end{remark}

Let $\g_n \to \g_{n-1}$ be the natural quotient map with kernel $\g_n^n$, and consider the pull-back functor
\begin{eqnarray}
\label{e:isafunctor}
p : \O(\g_{n-1}) \longrightarrow \O(\g_n).
\end{eqnarray}

\begin{lemma}
\label{simple0modules}
Let $\lambda \in \h_{n-1}^*$ and define $\nu \in \h_n^*$ by $\nu(h_i) = \lambda(h_i)$ for $i=0,...,n-1$ and $\nu(h_n) = 0$ for all $h\in \h$. Then
$p(L_\lambda)\cong L_{\nu}$ as $\g_n$-modules.
\end{lemma}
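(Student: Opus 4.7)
The plan is to show directly that $p(L_\lambda)$ is a simple highest weight $\g_n$-module of weight $\nu$, from which the isomorphism $p(L_\lambda) \cong L_\nu$ follows by the classification of simples in $\O(\g_n)$ recorded in Section~\ref{ss:hwmodules}.

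First I would unpack the definition of $p$: since the $\g_n$-action on $p(L_\lambda)$ factors through $\g_n \onto \g_{n-1}$ with kernel $\g_n^n$, the ideal $\g_n^n$ (and in particular $\h_n^n$) acts by zero. Let $v \in L_\lambda$ be a highest weight vector for the $\g_{n-1}$-structure, so $\n_{n-1}^+ \cdot v = 0$ and $h_i \cdot v = \lambda(h_i) v$ for $h \in \h$, $0 \le i \le n-1$. In $p(L_\lambda)$ we also have $\n_n^n \cdot v = 0$ and $h_n \cdot v = 0 = \nu(h_n) v$. Consequently $v$ is a highest weight vector of weight $\nu$ for $\g_n$, and since $v$ generates $L_\lambda$ over $U(\g_{n-1})$ it also generates $p(L_\lambda)$ over $U(\g_n)$ (the $\g_n$-action factoring through $\g_{n-1}$). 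Hence $p(L_\lambda)$ is a highest weight $\g_n$-module of weight $\nu$, and in particular lies in $\O(\g_n)$ (the axioms $(\calO1)$--$(\calO3)$ transfer along $p$ without difficulty).

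The only remaining point is simplicity. This is where one has to be slightly careful, but it is immediate from the surjectivity of $\g_n \onto \g_{n-1}$: any $\g_n$-submodule of $p(L_\lambda)$ is automatically a $\g_{n-1}$-submodule of $L_\lambda$, and vice versa, so the lattices of submodules coincide. Since $L_\lambda$ is simple as a $\g_{n-1}$-module, $p(L_\lambda)$ is simple as a $\g_n$-module. Combining the two conclusions, $p(L_\lambda)$ is a simple highest weight $\g_n$-module of weight $\nu$, and hence $p(L_\lambda) \cong L_\nu$ by the uniqueness of simple quotients of Verma modules.

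No step is truly hard here; if anything, the only place to be attentive is the translation between the triangular decompositions of $\g_{n-1}$ and $\g_n$, i.e.\ checking that $\n_n^+ = \n_{n-1}^+ \oplus \n^+\otimes t^n$ annihilates $v$ (the second summand does so because $\g_n^n$ is killed in the pullback), and that the weight of $v$ as an element of $\h_n^*$ is exactly $\nu$ as defined.
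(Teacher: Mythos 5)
Your proof is correct and takes essentially the same route as the paper, which simply asserts that $p(L_\lambda)$ is a simple highest weight $\g_n$-module of highest weight $\nu$ and then invokes the classification of simples from Section~\ref{ss:hwmodules}. You have merely supplied the routine verifications (that $v$ remains a highest weight vector, that pullback along the surjection $\g_n \onto \g_{n-1}$ preserves the submodule lattice and hence simplicity, and that the axioms $(\calO1)$--$(\calO3)$ transfer) which the paper leaves implicit.
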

\begin{proof}
Certainly $p(L_\lambda)$ is a simple highest weight module of highest weight $\nu$, and the proof follows.
\end{proof}

Now we state and prove some easy equivalences between Jordan blocks of $\O(\g_n)$ which arise by tensoring with one dimensional $\g_n$-modules. We write $\g' = [\g,\g]$ for the derived subalgebra. We note that $(\g_n)' = (\g')_n$, and so we may use the notation $\g_n'$ unambiguously.

For $\lambda\in \h_n^*$ we recall the notation $\lambda_{\ge 1}:=\lambda|_{\h_n^{\ge 1}}$. Any such $\lambda$ can be extended to an element of $\g_n^*$ via $\lambda(\n_n^{\pm}) = 0$, and we may abuse notation by identifying $\h_n^*$ with a subspace of $\g_n^*$. For $\lambda \in \Ann_{\g^*_n} (\g'_n)$ let $\C_{\lambda}$ be the one dimensional $\g_n$-module afforded by $\lambda$. 
 
\begin{lemma}
\label{L:supportonthecentreequiv}
Suppose that $\lambda, \nu \in \h_n^*$ such that $\lambda|_{\g_n'} = \nu|_{\g_n'}$. Then $(\bullet) \otimes_{U(\g_n)} \C_{\lambda - \nu}$ and $(\bullet) \otimes_{U(\g_n)} \C_{\nu - \lambda}$ are quasi-inverse equivalences between $\O^{(\lambda_{\ge 1})}(\g_n)$ and $\O^{(\nu_{\ge 1})}(\g_n)$.
\end{lemma}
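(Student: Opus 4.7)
The plan is to verify that the two tensor product functors are well-defined functors between the claimed Jordan blocks, and then to observe that they are inverse to one another via the obvious isomorphism of one-dimensional modules.

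First I would confirm that $\C_{\lambda - \nu}$ and $\C_{\nu - \lambda}$ really are $\g_n$-modules. The hypothesis $\lambda|_{\g_n'} = \nu|_{\g_n'}$ says that $\lambda - \nu$ annihilates $\g_n'$, so in particular vanishes on $\n_n^\pm \subseteq \g_n'$ and is a character of $\g_n$. Thus $\C_{\lambda-\nu}$ is well defined, and similarly for $\C_{\nu-\lambda}$. Moreover $\C_{\lambda-\nu} \otimes_{\C} \C_{\nu-\lambda} \cong \C_0$ as $\g_n$-modules, where $\C_0$ is the trivial module, so for any $M$ we have a natural isomorphism $(M \otimes \C_{\lambda-\nu}) \otimes \C_{\nu-\lambda} \cong M$ of $\g_n$-modules. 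This will give the quasi-inverse property as soon as we have shown the two functors land in the correct categories.

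Next I would show that tensoring with $\C_{\lambda-\nu}$ sends $\O^{(\nu_{\ge 1})}(\g_n)$ into $\O^{(\lambda_{\ge 1})}(\g_n)$. Given $M \in \O^{(\nu_{\ge 1})}(\g_n)$, write $\phi := \lambda - \nu$ and let $1_\phi$ spans $\C_\phi$. For $m \in M$ and $x \in \g_n$ we compute $x \cdot (m \otimes 1_\phi) = (xm) \otimes 1_\phi + \phi(x)(m \otimes 1_\phi)$. From this identity each of the category $\O$ axioms is easy to verify:
\begin{itemize}
\item[$(\O1)$] If $\{m_i\}$ generates $M$, then $\{m_i \otimes 1_\phi\}$ generates $M \otimes \C_\phi$.
\item[$(\O2)$] The $\h$-weight spaces of $M \otimes \C_\phi$ are just the shifted weight spaces $M^{\mu - \phi|_{\h_n^0}} \otimes 1_\phi$, so $\h$ acts semisimply.
\item[$(\O3)$] For $h \in \h_n^{\ge 1}$, the operator $h - \phi(h)$ acts on $M \otimes \C_\phi$ as $h$ acts on $M$; since $h$ acts locally finitely on $M$, the operator $h$ acts locally finitely on $M \otimes \C_\phi$. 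Similarly for $\n_n^+$.
\end{itemize}
The same formula $(h-\phi(h))(m \otimes 1_\phi) = (hm) \otimes 1_\phi$ shows that if $m$ lies in the generalised $\nu_{\ge 1}$-eigenspace for $\h_n^{\ge 1}$ then $m \otimes 1_\phi$ lies in the generalised $\nu_{\ge 1} + \phi|_{\h_n^{\ge 1}} = \lambda_{\ge 1}$-eigenspace. Hence $M \otimes \C_{\lambda-\nu} \in \O^{(\lambda_{\ge 1})}(\g_n)$, and by symmetry the other functor lands in $\O^{(\nu_{\ge 1})}(\g_n)$.

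Finally, since the composition of the two functors sends $M$ to $M \otimes \C_0 \cong M$ naturally in $M$, they are quasi-inverse equivalences. There is no real obstacle here — the only minor point requiring care is the bookkeeping in the third axiom of category $\O$, where one must note that shifting an operator by a scalar does not affect local finiteness.
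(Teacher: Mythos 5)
Your proof is correct and follows essentially the same route as the paper: observe that $\pm(\lambda-\nu)$ vanishes on $\g_n'$ and hence affords a one-dimensional $\g_n$-module, note the two tensor functors are quasi-inverse autoequivalences of $U(\g_n)$-Mod, and then check the definitions to confirm they respect the stated Jordan blocks. The paper dispatches that last verification with ``follows directly from the definitions,'' whereas you spell it out axiom by axiom; that is the only difference. One small point worth flagging: your computation correctly shows that $(\bullet)\otimes\C_{\lambda-\nu}$ carries $\O^{(\nu_{\ge 1})}(\g_n)$ into $\O^{(\lambda_{\ge 1})}(\g_n)$, which is the opposite direction from what the paper's proof asserts; the paper's phrasing appears to have the source and target swapped (harmlessly, since the lemma only asserts an equivalence ``between'' the two blocks), and your orientation is the correct one.
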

\begin{proof}
Since $\pm(\lambda - \nu)$ vanishes on $\g_n'$ it defines a one dimensional representation of $\g_n$, and the named functors are quasi-inverse autoequivalences of $\g_n\lmod$. To complete the proof it suffices to observe check that $(\bullet) \otimes_{U(\g_n)} \C_{\lambda - \nu}$ sends $\O^{(\lambda_{\ge 1})}(\g_n)$ to $\O^{(\nu_{\ge 1})}(\g_n)$, which follows directly from the definitions.
\end{proof}

\section{Parabolic Induction}
\label{S:parabolicinduction}

In this section we prove Theorem~\ref{T:introparabolic}, which allows us to relate the category $\O^{(\mu)}(\g_n)$ with a Jordan block of $\O(\l_n)$ for a Levi subalgebra $\l$. Recall that if $\nu \in \h^*$ then we extend $\nu$ to an element of $\g^*$ via $\nu(\n^\pm) = 0$, and write $\g^\nu$ for the coadjoint centraliser. 

\begin{theorem}
\label{maintheorem}
Let $\lambda \in \h_n^*$. Suppose that the centraliser $\l = \g^{\lambda_n}$ is in standard Levi form and let $\p := \l + \n^+ =  \l \oplus \r$ be a parabolic subalgebra with Levi factor $\l$ and nilradical $\r$. Write $\mu = \lambda_{\ge 1}$ (notation \eqref{e:introducelambdarestricted}). The categories $\calO^{(\mu)}(\l_n)$ and $\calO^{(\mu)}(\g_n)$ are equivalent. The quasi-inverse functors inducing the equivalence are parabolic induction and  $\r_n$-invariants:
\begin{eqnarray*}
\begin{array}{rcccl}
\Ind & : & \calO^{(\mu)} (\l_n) &\longrightarrow & \calO^{(\mu)} (\g_n) \vspace{3pt} \\
& & M &\longmapsto & U(\g_n) \otimes_{U(\p_n)} M\vspace{6pt} \\
(\bullet)^{\r_n} & : & \calO^{(\mu)} (\g_n) &\longrightarrow  &
\calO^{(\mu)} (\l_n) \vspace{3pt} \\
& & M &\longmapsto & M^{\mathfrak{r}_n}.
\end{array}
\end{eqnarray*}
\end{theorem}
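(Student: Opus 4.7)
The plan is to verify that both functors take values in the stated Jordan blocks, prove each is exact, and show that the unit and counit of the adjunction $\Ind \dashv (\bullet)^{\r_n}$ are isomorphisms by reduction to highest weight modules via Lemma~\ref{finitefiltration}.

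Well-definedness and exactness of $\Ind$ follow quickly from PBW. Writing $\r^- \subseteq \n^-$ for the nilradical of the opposite parabolic, we have $\Ind(M) \cong U(\r^-_n) \otimes_\C M$ as $\h$-modules, which makes axioms $(\calO1)$–$(\calO3)$ routine, and exactness follows from freeness of $U(\g_n)$ over $U(\p_n)$. The Jordan block condition $\Ind(M) \in \calO^{(\mu)}(\g_n)$ is a direct check using that $\h_n^{\ge 1} \subseteq \l_n$ and that its $\ad$-action on $U(\r^-_n)$ is locally nilpotent. For $(\bullet)^{\r_n}$, well-definedness on $\calO^{(\mu)}(\l_n)$ is easy: $[\l_n, \r_n] \subseteq \r_n$ so $M^{\r_n}$ is an $\l_n$-submodule, inheriting the axioms and the Jordan block condition from $M$.

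The crux is exactness of $(\bullet)^{\r_n}$. Given a short exact sequence $0 \to A \to B \to C \to 0$ in $\calO^{(\mu)}(\g_n)$, one must show $B^{\r_n} \to C^{\r_n}$ is surjective, equivalently that the connecting map $C^{\r_n} \to H^1(\r_n, A)$ vanishes. Using Lemma~\ref{finitefiltration} I would reduce to the case where $A$ and $C$ are highest weight, and then invoke Theorem~\ref{vermaexttheorem} together with the central character analysis from Section~\ref{ss:centreofenvelopingalg}: the hypothesis $\l = \g^{\lambda_n}$ forces $\lambda_n$ to be regular on the roots of $\r$, so that highest weight modules whose highest weights differ by a nonzero weight of $\r$ carry different central characters and cannot produce a nonzero extension. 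This is the main obstacle of the proof, and the step where the Ra{\"i}s--Tauvel description of $Z(\g_n)$ does the essential work; in comparison to the Takiff case, the higher-order generators $\omega(\partial^{(k)} p_j)$ make the separation of central characters considerably more delicate.

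Once exactness of both functors is established, the adjunction provides a unit $\eta_M : M \to (\Ind M)^{\r_n}$ and a counit $\varepsilon_N : \Ind(N^{\r_n}) \to N$, and by the five-lemma applied to a highest weight filtration it suffices to prove these are isomorphisms when $M$ and $N$ are highest weight. For a highest weight $M$ of weight $\lambda$, the module $\Ind(M) \cong U(\r^-_n) \otimes M$ is highest weight over $\g_n$ of the same weight, and a weight-space argument, using the grading of $U(\r^-_n)$ by $-\Z_{\ge 0} \Phi(\r)$ together with the fact that the $\r_n$-action strictly decreases this grading on an invariant's support, shows that $\r_n$-invariants are concentrated in $1 \otimes M$; hence $\eta_M$ is an isomorphism. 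The counit isomorphism then follows by exactness of both functors, the unit being a natural isomorphism, and the triangle identities of the adjunction.
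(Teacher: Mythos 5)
Your architecture correctly mirrors the paper's (adjunction, reduction to highest weight modules via Lemma~\ref{finitefiltration}, exactness of $(\bullet)^{\r_n}$ as the crux), but the mechanism you propose for that exactness has a genuine gap. You reduce to a short exact sequence $0 \to A \to B \to C \to 0$ with $A$, $C$ highest weight and invoke the fact that highest weight modules whose highest weights differ by a nonzero weight of $\r$ carry distinct central characters, so there are no extensions between them. This only disposes of the case $\lambda_0 - \lambda_0' \notin \C\Phi_{\lambda_n}$. In the complementary case where the two highest weights lie in a common coset of $\C\Phi_{\lambda_n}$ --- the same-central-character case, where $\Ext^1_{\g_n}(C,A)$ need not vanish --- you give no reason why $B^{\r_n} \to C^{\r_n}$ should still be surjective, and this is exactly the case the argument must handle.

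The paper's route around this is Proposition~\ref{highestweightmodinvariants}: for any indecomposable $M \in \calO^{(\mu)}(\g_n)$ with $\g^{\mu_n}$ in standard Levi form, $M^{\r_n}$ is precisely $\bigoplus_{\nu\in\Xi_M} M^{\nu}$ for a single coset $\Xi_M \in \h^*/\C\Phi_{\mu_n}$. Once one has this, exactness is immediate, since a surjection of $\h$-semisimple modules is a surjection on each $\h$-weight space. Establishing Proposition~\ref{highestweightmodinvariants} needs Theorem~\ref{vermaexttheorem}, but also the intermediate Corollary~\ref{C:vermaextensions}: an indecomposable object has finite dimensional weight spaces and therefore a generalised central character, forcing every highest weight subquotient to have $\lambda_0$ in the same coset $\Xi_M$, which is what controls the full weight support of $M$. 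Your sketch does not engage with this step. The weight-space argument you do invoke for the unit --- that $\r_n$-invariants of $\Ind(M)$ concentrate in $1\otimes M$ --- is precisely Proposition~\ref{highestweightmodinvariants} specialised to the module $\Ind(M)$; what exactness requires is the analogous statement for an arbitrary indecomposable module $M$, and that needs the central-character control of Corollary~\ref{C:vermaextensions} which is missing from your outline.
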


Theorem~\ref{maintheorem} is inspired by a category equivalence in modular representation theory due to Friedlander and Parshall \cite[Theorem 2.1]{FP}. The case $n = 1$ is due to the first author \cite{Ch} and our method here is a generalisation of {\it loc. cit.}

We observe that $\Ind$ is left adjoint to $(\bullet)^{\r_n}$ since we have inverse isomorphisms $$\Hom_{\l_n}(M, N^{\mathfrak{r}_n}) \underset{\eta}{\overset{\theta}{\myrightleftarrows{\rule{1cm}{0cm}}}} \Hom_{\g_n}(\Ind M, N)$$ 
given by $\theta(f)(u \otimes m) = u \cdot f(m)$ and $\eta(g)(m) = g(1 \otimes m)$ for $u\in U(\g_n)$ and $m \in M$.

Let $\Id_\mathcal{C}$ denote the identity endofunctor of a category $\mathcal{C}$. In order to show that the adjoint functors $\Ind$ and $(\bullet)^{\r_n}$ are equivalences we consider the unit and counit of the adjunction, see \cite[II.7]{HS} for example. The unit is the natural transformation $\psi : \Id_{\O(\g_n)} \to (\bullet)^{\r_n} \circ \Ind$ which is obtained by applying $\eta$ to the identity mapping $N^{\r_n} \to N^{\r_n}$, whilst the counit is the natural transformation $\varphi : \Ind \circ (\bullet)^{\r_n} \to \Id_{\O(\l_n)}$ obtained by applying $\theta$ to the identity mapping $\Ind M \to \Ind M$. In particular we have
\begin{eqnarray}
\label{e:adjunctionmorphisms}
\begin{array}{rcl}
\psi_M &:& M \longrightarrow (\Ind M)^{\mathfrak{r}_n}\\
& & m \longmapsto 1\otimes m,\vspace{4pt}\\
\varphi_N & : & \Ind (N^{\mathfrak{r}_n}) \longrightarrow N\\
& & u\otimes m \longmapsto u\cdot m.
\end{array}
\end{eqnarray} 
In order to complete the proof of Theorem~\ref{maintheorem} it suffices to show that $\psi$ and $\varphi$ are both natural equivalences. The proof, which is given is Section~\ref{ss:exactinvariants}, depends heavily on the exactness of $(\bullet)^{\r_n}$, which will occupy the majority of Sections~\ref{ss:extandcc} and \ref{ss:exactinvariants}.

\subsection{Central characters}
\label{ss:extandcc}

If two $\g_n$-modules admit different infinitesimal central characters then there can be no extensions between them. The main step in proving exactness of $(\bullet)^{\r_n}$ is the following result which leads to a vanishing criterion for extensions. For $\nu \in \h^*$ we make the notation $$\Phi_\nu := \{\alpha \in \Phi : \nu(h_\alpha) = 0\}.$$

For $\lambda \in \h_n^*$ recall the notation $\chi_\lambda$ for central characters, introduced in Section~\ref{ss:hwmodules}, (2).

\begin{theorem}
\label{vermaexttheorem}
Let $\lambda, \lambda' \in \h^*_n$ such that $\lambda_{\ge 1} = \lambda'_{\ge 1}$ and $\g^{\lambda_n}$ is in standard Levi form. Then $\chi_\lambda = \chi_{\lambda'}$ if and only if $\lambda_0 - \lambda'_0 \in \C\Phi_{\lambda_n}$.
\end{theorem}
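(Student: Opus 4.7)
The strategy is to compute the central characters on a generating set of $Z(\g_n)$ via the Harish--Chandra projection
\[
\xi : U(\g_n) = U(\h_n) \oplus \bigl(\n_n^- U(\g_n) + U(\g_n)\n_n^+\bigr) \longrightarrow U(\h_n) = S(\h_n),
\]
for which $\chi_\lambda(z) = \xi(z)(\lambda)$ on every central $z$. By the Corollary of Theorem~\ref{RT}, $Z(\g_n)$ is generated by the elements $\omega(\partial^{(k)}p_j)$ in \eqref{e:centralgenerators}, so it suffices to give a closed form for $\xi(\omega(\partial^{(k)} p_j))$. Two preliminary reductions will streamline this. First, relations \eqref{e:HSder1}--\eqref{e:HSder2} imply $\partial^{(k)} = \tfrac{1}{k!}(\partial^{(1)})^k$; letting $\tilde D$ denote the derivation of $U(\g_n)$ extending the Lie algebra derivation $x_i \mapsto i\,x_{i-1}$, the $\ad$-equivariance characterisation of $\omega$ gives $\omega\circ\partial^{(1)} = \tilde D\circ\omega$. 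Second, $\tilde D$ stabilises each of $\h_n$ and $\n_n^\pm$, so it stabilises both summands of the Harish--Chandra decomposition and thus commutes with $\xi$. Consequently $\xi(\omega(\partial^{(k)} p_j)) = \tfrac{1}{k!}\tilde D^k\bigl(\xi(\omega(p_j))\bigr)$.

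The pivotal calculation is $\xi(\omega(p_j)) = \bar p_j(h_n)$, where $\bar p_j := p_j|_\h \in S(\h)^W$ is the Chevalley restriction. Because $n \ge 1$, the graded piece $\g_n^n$ is abelian, so $\omega(p_j) = p_j$ viewed in $S(\g_n^n) \subseteq U(\g_n)$. Split $p_j = \bar p_j(h) + \tilde p_j$: the $\ad(\h)$-weight-zero condition, together with $\Phi^+ \subseteq \sum_{\alpha\in\Delta}\Z_{\ge 0}\alpha$, forces every monomial of $\tilde p_j$ to contain at least one factor $e_{\alpha,n}$ with $\alpha\in\Phi^+$, since a monomial involving only $h$'s and $f_\alpha$'s would have strictly negative weight unless it involves no $f_\alpha$ at all. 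Inside the commutative subalgebra $S(\g_n^n)$ this factor may be pushed to the right, placing $\tilde p_j \in U(\g_n)\n_n^+$, so $\xi(\tilde p_j) = 0$. Then applying the ring homomorphism $\exp(T\tilde D)$ on $S(\h_n)$, which sends $h_n \mapsto \sum_{k=0}^n \binom{n}{k} T^k h_{n-k}$, produces the generating function
\[
\sum_{k\ge 0} T^k\, \chi_\lambda\bigl(\omega(\partial^{(k)} p_j)\bigr) \;=\; \bar p_j\Bigl(\sum_{k=0}^n \binom{n}{k} T^k \lambda_{n-k}\Bigr) \;\in\; \C[T].
\]

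Under the hypothesis $\lambda_{\ge 1} = \lambda'_{\ge 1}$, the polynomial arguments $\mu(T) := \sum_k \binom{n}{k} T^k\lambda_{n-k}$ and $\mu'(T)$ agree in degrees $<n$ and differ only in the $T^n$-coefficient, by $\delta := \lambda_0 - \lambda'_0$. A Taylor expansion shows
\[
\bar p_j(\mu(T)) - \bar p_j(\mu'(T)) \;\equiv\; T^n\,(d\bar p_j)_{\lambda_n}(\delta) \pmod{T^{n+1}},
\]
because quadratic and higher Taylor corrections in the perturbation $T^n\delta$ contribute only to degrees $\ge 2n > n$. Matching coefficients of $T^0,\dots,T^n$ for all $j$ therefore reduces the equality $\chi_\lambda = \chi_{\lambda'}$ to the purely linear condition $\delta \in \bigcap_{j=1}^r \ker\bigl((d\bar p_j)_{\lambda_n}\bigr)$.

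The final step, which I expect to be the main technical obstacle, is the classical identification
\[
\bigcap_{j=1}^r \ker\bigl((d\bar p_j)_{\lambda_n}\bigr) \;=\; \C\Phi_{\lambda_n}.
\]
The inclusion $\supseteq$ is formal: the $W$-invariance of $\bar p_j$ makes $(d\bar p_j)_{\lambda_n}\in\h$ a $W_{\lambda_n}$-invariant covector, and any such covector annihilates every $\alpha\in\Phi_{\lambda_n}$ (since $\alpha$ is a $(-1)$-eigenvector of $s_\alpha\in W_{\lambda_n}$). For the reverse inclusion, the standard Levi hypothesis ensures that $W_{\lambda_n}$ is a parabolic reflection subgroup of $W$, so the Chevalley--Shephard--Todd theorem applies to $W_{\lambda_n}$ and a dimension count on the scheme-theoretic fibre of $p = (p_1,\dots,p_r)$ at $\lambda_n$---whose local ring is isomorphic to the coinvariant algebra of $W_{\lambda_n}$ acting on its reflection representation $\C\Phi_{\lambda_n}$---forces the kernel to have dimension exactly $\dim\C\Phi_{\lambda_n}$, yielding equality and completing the proof.
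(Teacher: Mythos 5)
Your proof is correct and takes a genuinely different, and arguably cleaner, route than the paper's. The paper reaches the central-character formula by introducing two auxiliary maps $\xi_\mu,\eta_\mu:\h^*\to\C^r$, showing their difference is constant in $\nu$ by reducing to the Takiff case via the embedding $\g_1\hookrightarrow\g_n$ (Lemma~\ref{L:xiandetaswap}), and invoking several results from \cite{Ch}; it also needs Lemma~\ref{L:characterlemma} to cut the list of generators down to the $z_i^{(n)}$. You instead observe that $\partial^{(k)}=\tfrac{1}{k!}(\partial^{(1)})^k$ (which follows immediately from \eqref{e:HSder1}--\eqref{e:HSder2}), that $\partial^{(1)}$ comes from the Lie algebra derivation $x_i\mapsto i\,x_{i-1}$ so that $\omega\circ\partial^{(1)}=\tilde D\circ\omega$, and that $\tilde D$ preserves the Harish--Chandra decomposition. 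Together with the direct computation $\xi(\omega(p_j))=\bar p_j(h_n)$ (valid because $\g_n^n$ is abelian and every zero-weight monomial with a root-vector factor must contain some $e_{\alpha,n}$), this packages all the central-character values into a single generating function $\bar p_j\bigl(\sum_k\binom{n}{k}T^k\lambda_{n-k}\bigr)$, and the Taylor expansion then isolates the linear differential $(d\bar p_j)_{\lambda_n}(\delta)$ in the $T^n$-coefficient, with lower coefficients insensitive to $\lambda_0$ and higher ones falling outside the generating set. This replaces the paper's Lemmas~\ref{L:xiandetaswap} and \ref{L:characterlemma}, and the reliance on \cite[Lemma~4.6(b)]{Ch}, with a self-contained calculation. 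The final identification $\bigcap_j\ker(d\bar p_j)_{\lambda_n}=\C\Phi_{\lambda_n}$ is the same endpoint the paper reaches by citing \cite[Lemma~4.6(e)]{Ch}; your inclusion $\supseteq$ via $W_{\lambda_n}$-invariance and your dimension count via the coinvariant algebra of $W_{\lambda_n}$ are both sound (note that by Steinberg's theorem $W_{\lambda_n}$ is always generated by the reflections $s_\alpha$, $\alpha\in\Phi_{\lambda_n}$, so the standard Levi hypothesis is not really doing work in this step, consistent with the paper's Remark~\ref{remark} after the Corollary that the theorem holds without it).
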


\begin{corollary}
\label{C:vermaextensions}
If $M \in \calO^{(\mu)}(\g_n)$ is indecomposable and $\g^{\mu_n}$ is in standard Levi form, then there is a unique coset $\Xi_M \in \h^*/ \C\Phi_{\mu_n}$ such that if $N$ is a highest weight subquotient of $M$ of weight $\lambda \in \h_n^*$ then $\lambda_{\ge 1} = \mu$ and $\lambda_0 + \C\Phi_{\mu_n} = \Xi_M$.
\end{corollary}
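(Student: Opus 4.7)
The plan is to use the highest weight filtration supplied by Lemma~\ref{finitefiltration} to extract a finite list of central characters appearing on $M$, then use the indecomposability of $M$ together with a Chinese remainder argument to cut this list to a single character, and finally translate that single central character into a single coset in $\h^*/\C\Phi_{\mu_n}$ using Theorem~\ref{vermaexttheorem}.

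Concretely, let $M \in \O^{(\mu)}(\g_n)$ be nonzero and indecomposable, and fix a highest weight filtration $0 = M_0 \subset M_1 \subset \cdots \subset M_k = M$ whose subquotients $N_i := M_i/M_{i-1}$ are highest weight of weights $\lambda^{(i)} \in \h_n^*$. Since each $N_i$ lies in $\O^{(\mu)}(\g_n)$, Remark~\ref{R:Vermasinblocks} forces $\lambda^{(i)}_{\ge 1} = \mu$ for every $i$; in particular $\lambda^{(i)}_n = \mu_n$, so any pair $\lambda^{(i)}, \lambda^{(j)}$ satisfies the hypothesis of Theorem~\ref{vermaexttheorem}. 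By Section~\ref{ss:hwmodules}(2) each $N_i$ is annihilated by the maximal ideal $\mathfrak{m}_i := \ker \chi_{\lambda^{(i)}} \subseteq Z(\g_n)$, so $\mathfrak{m}_i M_i \subseteq M_{i-1}$, and induction on $i$ yields $\mathfrak{m}_1 \mathfrak{m}_2 \cdots \mathfrak{m}_k \cdot M = 0$.

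Grouping repetitions among $\mathfrak{m}_1, \ldots, \mathfrak{m}_k$ produces distinct maximal ideals $\mathfrak{n}_1, \ldots, \mathfrak{n}_r \subseteq Z(\g_n)$ and exponents $a_j > 0$ with $\prod_j \mathfrak{n}_j^{a_j} \cdot M = 0$. Because powers of distinct maximal ideals are pairwise coprime, the Chinese remainder theorem yields $Z(\g_n)/\prod_j \mathfrak{n}_j^{a_j} \cong \prod_j Z(\g_n)/\mathfrak{n}_j^{a_j}$, and the component idempotents give orthogonal central idempotents $e_1, \ldots, e_r$ that sum to $1$ and act on $M$. The resulting decomposition $M = \bigoplus_j e_j M$ is a decomposition of $\g_n$-modules, and indecomposability of $M$ forces $r = 1$. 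Hence all $\chi_{\lambda^{(i)}}$ coincide, and Theorem~\ref{vermaexttheorem} gives $\lambda^{(i)}_0 - \lambda^{(j)}_0 \in \C\Phi_{\mu_n}$ for all $i,j$, so $\Xi_M := \lambda^{(1)}_0 + \C\Phi_{\mu_n}$ is a well-defined coset containing every $\lambda^{(i)}_0$.

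For an arbitrary highest weight subquotient $N$ of $M$ of weight $\lambda$, the unique maximal ideal $\mathfrak{n}$ of $Z(\g_n)$ acting locally nilpotently on $M$ also acts locally nilpotently on $N$, while the kernel of the central character of $N$ is a maximal ideal containing $\mathfrak{n}$ and hence equal to $\mathfrak{n}$. Thus $\chi_\lambda = \chi_{\lambda^{(1)}}$, and Theorem~\ref{vermaexttheorem} gives $\lambda_0 \in \Xi_M$; the equality $\lambda_{\ge 1} = \mu$ follows from Remark~\ref{R:Vermasinblocks}. The main obstacle in this plan is the CRT decomposition step, where indecomposability is consumed and one must verify that the idempotents lift coherently from $Z(\g_n)/\prod_j \mathfrak{n}_j^{a_j}$ to produce a genuine module decomposition; the remaining steps are essentially bookkeeping on top of Theorem~\ref{vermaexttheorem}.
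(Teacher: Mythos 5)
Your proposal is correct and follows essentially the same route as the paper: use the highest weight filtration (Lemma~\ref{finitefiltration}) and Remark~\ref{R:Vermasinblocks} to pin down $\lambda_{\ge 1}=\mu$, use indecomposability to force a single (generalized) central character on $M$, and then feed that into Theorem~\ref{vermaexttheorem}. The only difference is one of presentation: the paper asserts tersely that $M$ admits a generalized central character because its weight spaces are finite dimensional and $M$ is indecomposable, whereas you actually prove this by showing a product $\prod_j\mathfrak{n}_j^{a_j}$ annihilates $M$ and then running the Chinese remainder argument on $Z(\g_n)/\prod_j\mathfrak{n}_j^{a_j}$; that argument is fine, and no ``lifting'' of idempotents is needed since $Z(\g_n)$ already acts on $M$ through that finite quotient, so the central idempotents land in $\operatorname{End}_{\g_n}(M)$ directly. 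One small point worth smoothing: indecomposability gives you that exactly one $e_jM$ is nonzero (and equals $M$); to then conclude all $\chi_{\lambda^{(i)}}$ coincide, note that $\mathfrak{n}_{j_0}^{a_{j_0}}$ annihilates each section $N_i$, on which $Z(\g_n)$ acts by the character $\chi_{\lambda^{(i)}}$, so $\mathfrak{n}_{j_0}\subseteq\ker\chi_{\lambda^{(i)}}$ and maximality forces equality — that is what makes ``$r=1$'' the right conclusion rather than merely ``one block survives.''
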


Before we proceed, we explain how to deduce the Corollary from Theorem~\ref{vermaexttheorem}. Using the Jordan decomposition from Lemma~\ref{generalisedeigenvalues} along with Remark~\ref{R:Vermasinblocks} we see that all subquotients of an indecomposable module lie in the same Jordan block $\O^{(\mu)}(\g_n)$, which confirms that $\lambda_{\ge 1} = \mu$. Since $M$ has finite dimensional weight spaces \eqref{fdweightspaces} and is indecomposable it admits a generalised central character, i.e. there is a unique maximal ideal $\m$ of $Z(\g_n)$ such that $\m^k M = 0$ for $k \gg 0$. Thus all of the highest weight subquotients have the same central character (\textsection \ref{ss:hwmodules}, (2)). Now Corollary~\ref{C:vermaextensions} follows from Theorem~\ref{vermaexttheorem}.

\begin{remark}
Later we shall see that Theorem~\ref{vermaexttheorem} and Corollary~\ref{C:vermaextensions} hold without the standard Levi type hypothesis. This follows from Theorem~\ref{T:introtwisting}. However since the latter theorem relies on Theorem~\ref{vermaexttheorem} we retain this hypothesis to keep these dependences clear.
\end{remark}

We now proceed to prove Theorem~\ref{vermaexttheorem}. Recall from Section~\ref{ss:centreofenvelopingalg} that the symmetric invariants $S(\g)^\g$ are generated by algebraically independent homogeneous elements $p_1,...,p_r$. Furthermore there is an embedding $S(\g)^\g \hookrightarrow S(\g_n)^{\g_n}$ such that $S(\g_n)^{\g_n}$ is generated by $r(n+1)$ elements $\partial^{(k)} p_j, \ j=1,...,r, \ k =0,...,n$. There is a linear map $\omega : S(\g_n)^{\g_n} \to Z(\g_n)$ and we will be especially interested in the central elements
$$z_i^{(j)} := \omega(\partial^{(j)} p_i) \ \ \text{ for } i = 1,...,r.$$
Now let $U(\g_n)^{\h}$ be the invariant subalgebra under the adjoint action of $\h$. Also let $U(\g_n) \n_n^+$ be the left ideal generated by $\n_n^+$. The intersection $U(\g_n) \n_n^+ \cap U(\g_n)^{\h}$ is actually an ideal of $U(\g_n)^\h$, and it is not hard to see that the quotient by this ideal is isomorphic to $U(\h_n)$. Following the observations of \cite[\textsection 1.7]{H} verbatim we see that $\chi_\lambda$ coincides with the composition
\begin{eqnarray}
\label{e:centcharalternative}
U(\g_n)^{\h} \to U(\h_n) = \C[\h_n^*] \overset{\ev_\lambda}{\longrightarrow} \C
\end{eqnarray}
where $\ev_\lambda$ denotes evaluation at $\lambda$. This allows us to consider $\chi_\lambda(p)$ for any $p \in U(\g_n)^{\h}$, not just $p \in Z(\g_n)$.

Now for $\mu \in (\h_n^{\ge1})^*$ we define two maps $\h^* \to \C^r$. For $\nu \in \h^*$ we use the notation $(\nu, \mu)$ to denote the element of $\h_n^*$ which restricts as $\nu$ on $\h_n^0$ and to $\mu$ on $\h_n^{\ge 1}$.  The first map
\begin{eqnarray*}
\xi_\mu(\nu) := (\chi_{(\nu,\mu)}\circ \omega \circ d^{(n)} p_1,..., \chi_{(\nu, \mu)} \circ \omega \circ d^{(n)} p_r) \in \C^r
\end{eqnarray*}
whilst the second is
\begin{eqnarray*}
\eta_\mu(\nu) := (\theta_{(\nu, \mu)}(p_1),...,\theta_{(\nu, \mu)}(p_r)) \in \C^r
\end{eqnarray*}
where $\theta_{(\nu, \mu)}$ denotes the composition
$$S(\g) = \C[\g^*] \to \C[\h^*] \to \C[\h_n^*] \to \C$$
and $\C[\g^*] \to \C[\h^*]$ is restriction across the triangular decomposition of $\g^*$, whilst the second map is the restriction of $d^{(n)}$ to $\C[\h^*]$ and the third map is evaluation $\ev_{(\nu, \mu)}$.
\begin{lemma}
\label{L:xiandetaswap}
$\xi_\mu(\nu) - \eta_\mu(\nu)$ depends only on $\mu$.
\end{lemma}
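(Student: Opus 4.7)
The plan is to work with the Harish--Chandra type projection $\pi: U(\g_n)^\h \to U(\h_n)$ implicit in \eqref{e:centcharalternative}, which factors the central character as $\chi_\lambda = \ev_\lambda \circ \pi$. Under this, $\xi_\mu(\nu)_i = \ev_{(\nu,\mu)}\,\pi(\omega(d^{(n)} p_i))$ while $\eta_\mu(\nu)_i = \ev_{(\nu,\mu)}\,d^{(n)}(p_i|_\h)$. Since the evaluation at $(\nu,\mu)$ of any element of $U(\h_n^{\ge 1})$ depends only on $\mu$, it suffices to establish the congruence
\[ \pi(\omega(d^{(n)} p_i)) \equiv d^{(n)}(p_i|_\h) \pmod{U(\h_n^{\ge 1})}. \]
I will decompose $p_i = p_i|_\h + \tilde p_i$ with $\tilde p_i := p_i - p_i|_\h$ the $\h$-invariant part, each of whose monomials contains at least one non-Cartan factor. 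The term $d^{(n)}(p_i|_\h) \in S(\h_n^0 + \h_n^n)$ lies in the abelian subalgebra $S(\h_n)$ and is fixed by both $\omega$ and $\pi$, so what remains is the key claim that $\pi(\omega(d^{(n)} \tilde p_i)) \in U(\h_n^{\ge 1})$.

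Every monomial of $d^{(n)} \tilde p_i$ has the form $v_0\,Y$ with $v \in \g$ and $Y$ a monomial in $\g_n^n$, and it carries at least one non-Cartan factor (since $d^{(n)}$ does not remove factors from the input). For $n \geq 1$ one has $[\g_n^n, \g_n^n] = 0$, so the only nonzero commutators among the factors of $v_0\,Y$ take the form $[v_0, y_n] = [v, y]_n \in \g_n^n$. Expanding the symmetrization $\omega(v_0\,Y)$ as an average over the $m$ possible positions of $v_0$ and applying only these commutators yields an identity
\[ \omega(v_0\,Y) = v_0\,Y + R, \qquad R \in U(\g_n^n) \subseteq U(\g_n^{\ge 1}), \]
and hence $\pi(R) \in U(\h_n^{\ge 1})$.

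To handle the surviving term $\pi(v_0\,Y)$ I pass to the PBW decomposition $U(\g_n) = U(\n_n^-) \otimes U(\h_n) \otimes U(\n_n^+)$ and extend $\pi$ to a vector space map on all of $U(\g_n)$; this extension agrees with the original $\pi$ on $U(\g_n)^\h$. If $v \in \n^\pm$, every term of the PBW expansion of $v_0\,Y$ has either $v_0$ in its $\n_n^\pm$ slot (so that $\pi$ vanishes) or $v_0$ eliminated by commutators with factors of $Y$, leaving a product in $U(\g_n^n)$; thus $\pi(v_0\,Y) \in U(\h_n^n) \subseteq U(\h_n^{\ge 1})$. If instead $v = h \in \h$, a short PBW calculation gives $\pi(h_0\,Y) = h_0 \cdot \pi(Y)$, and in this case the non-Cartan factor of $v_0\,Y$ must reside in $Y$, forcing $\pi(Y) = 0$ and hence $\pi(h_0\,Y) = 0$. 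Summing over all monomials of $d^{(n)} \tilde p_i$ then establishes the key claim. The main subtlety will be the $v = h$ case, where one must use precisely the fact that $\tilde p_i$ has no pure-Cartan summands to force the vanishing $\pi(Y) = 0$.
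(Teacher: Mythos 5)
Your argument is correct, but it takes a genuinely different route from the paper. The paper observes that all the elements in play ($p_i$ viewed in $S(\g_n^n)$, $d^{(n)} p_i$, $d^{(n)}(p_i|_\h)$) live inside the image of the Lie algebra embedding $\g_1 \hookrightarrow \g_n$ given by $x_0 \mapsto x_0$, $x_1 \mapsto x_n$, so the statement reduces directly to the $n = 1$ (Takiff) case, which is then cited from \cite[Lemma~4.6(b)]{Ch}. Your proof instead performs the computation from scratch for general $n$: the decomposition $p_i = p_i|_\h + \tilde p_i$, the identity $\omega(v_0 Y) = v_0 Y + R$ with $R \in U(\g_n^n)$ (valid because $[\g_n^n, \g_n^n] = 0$ for $n \geq 1$), and the analysis of the extended Harish--Chandra projection $\pi$ on $v_0 Y$ by cases on $v \in \n^\pm$ versus $v \in \h$ are all sound, and the key observation that $\pi(h_0 Y) = h_0 \cdot \pi(Y) = 0$ because $Y$ retains a non-Cartan factor is exactly what closes the argument. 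Your approach has the advantage of being self-contained (it does not outsource the core calculation to the earlier paper), while the paper's is shorter and makes the structural point that the whole obstruction is already visible inside a copy of $\g_1$. Two small wording slips: $\tilde p_i$ is not ``the $\h$-invariant part'' --- both summands are $\h$-invariant, and what distinguishes $\tilde p_i$ is that each of its monomials contains a non-Cartan factor --- and the symmetrization average runs over $m+1$ positions of $v_0$ if $Y$ has $m$ factors, not $m$; neither affects the argument.
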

\begin{proof}
Note that we a Lie algebra embedding $\g_1 \hookrightarrow \g_n$ given by $x_0 \mapsto x_0$ and $x_1 \mapsto x_n$ for all $x\in \g$. We have $\xi_\mu(\nu) - \eta_\mu(\nu) \in S(\g_1) \subseteq S(\g_n)$ and this reduces the claim to the case $n = 1$. This was proven in \cite[Lemma~4.6(b)]{Ch}.
\end{proof}

\begin{lemma}
\label{L:characterlemma}
For $\lambda, \lambda' \in \h_n^*$ satisfying $\lambda_{\ge 1} = \lambda'_{\ge 1}$, 
 we have $\chi_\lambda = \chi_{\lambda'}$ if and only if $\chi_\lambda(z_i^{(n)}) = \chi_{\lambda'}(z_i^{(n)})$ for $i=1,...,r$.
\end{lemma}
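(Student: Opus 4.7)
The forward implication is immediate: if $\chi_\lambda = \chi_{\lambda'}$ as characters of $Z(\g_n)$, then they agree on each $z_i^{(n)} \in Z(\g_n)$. For the converse, I would invoke the corollary following Theorem~\ref{RT}, which states that $Z(\g_n)$ is the polynomial algebra on the $(n+1)r$ generators $\{z_i^{(k)} : 1 \le i \le r,\ 0 \le k \le n\}$. Hence $\chi_\lambda = \chi_{\lambda'}$ as soon as they agree on all these generators. The hypothesis covers the case $k = n$, so the task reduces to showing $\chi_\lambda(z_i^{(k)}) = \chi_{\lambda'}(z_i^{(k)})$ for $0 \le k < n$, using only the assumption $\lambda_{\ge 1} = \lambda'_{\ge 1}$.

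The key observation is that $z_i^{(k)} \in U(\g_n^{\ge 1})$ for every $k < n$. Indeed, by \eqref{e:propertiesofdp} we have $\partial^{(k)} p_i = d^{(k)} p_i + q_i^{(k)}$ with $q_i^{(k)} \in S(\g_n^{\ge n-k+1}) \subseteq S(\g_n^{\ge 1})$, while $d^{(k)} p_i = \sum_x x_{n-k}\, dp_i/dx_n$ is polynomial in the generators $x_{n-k}$ and $x_n$ as $x$ ranges over a basis of $\g$; all of these lie in $\g_n^{\ge 1}$ when $k < n$. Thus $\partial^{(k)} p_i \in S(\g_n^{\ge 1})$, and since $\omega$ acts on a monomial in elements of $\g_n^{\ge 1}$ by symmetrizing a product lying in the subalgebra $U(\g_n^{\ge 1})$, we obtain $z_i^{(k)} = \omega(\partial^{(k)} p_i) \in U(\g_n^{\ge 1})$.

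I would then conclude by showing that the Harish-Chandra projection $U(\g_n)^\h \to U(\h_n)$ described around \eqref{e:centcharalternative} restricts to a map $U(\g_n^{\ge 1})^\h \to U(\h_n^{\ge 1})$. Applying the PBW theorem to the triangular decomposition $\g_n^{\ge 1} = \n_n^{-,\ge 1} \oplus \h_n^{\ge 1} \oplus \n_n^{+,\ge 1}$, any $z \in U(\g_n^{\ge 1})$ is a unique sum of monomials $u_- u_0 u_+$ with $u_\pm \in U(\n_n^{\pm,\ge 1})$ and $u_0 \in U(\h_n^{\ge 1})$; these also form PBW monomials of $U(\g_n)$. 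Modulo $U(\g_n)\n_n^+$ only the terms with $u_+ = 1$ survive, and $\h$-invariance then forces the remaining $u_-$'s to be $1$ as well. Hence the image lies in $U(\h_n^{\ge 1})$, and so $\chi_\lambda(z_i^{(k)}) = \ev_\lambda(\bar z_i^{(k)})$ depends only on $\lambda|_{\h_n^{\ge 1}} = \lambda_{\ge 1}$. Since $\lambda_{\ge 1} = \lambda'_{\ge 1}$, the needed equalities follow and we conclude $\chi_\lambda = \chi_{\lambda'}$.

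The argument is essentially careful bookkeeping; no serious obstacle is anticipated. The only mild subtlety is verifying the two grading-compatibility facts (that $\omega$ preserves the subalgebra $U(\g_n^{\ge 1})$ and that the Harish-Chandra projection sends $U(\g_n^{\ge 1})^\h$ into $U(\h_n^{\ge 1})$), both of which follow directly from the fact that $\g_n^{\ge 1}$ is itself a Lie subalgebra with its own triangular decomposition inherited from that of $\g_n$.
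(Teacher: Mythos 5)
Your proof is correct and follows essentially the same approach as the paper: both reduce the problem to the observation that $z_i^{(k)}$ for $k<n$ lies in $U(\g_n^{\ge 1})$ (the paper establishes the sharper inclusion $z_i^{(j)}\in U(\g_n^{\ge n-j})$ directly from \eqref{e:HSder1}--\eqref{e:HSder2}, whereas you go through \eqref{e:propertiesofdp}, but this is immaterial) and then conclude that the Harish--Chandra image lies in $U(\h_n^{\ge 1})$, so $\chi_\lambda(z_i^{(k)})$ depends only on $\lambda_{\ge 1}$. You spell out the PBW/projection compatibility with the grading more explicitly than the paper does, but the underlying argument is the same.
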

\begin{proof}
Since the centre $Z(\g_n)$ is generated by the elements $\omega(\partial^{(k)} p_j)$ it follows that $\chi_\lambda = \chi_{\lambda'}$ if and only if the characters coincide on these elements. Thus if we can show that $\chi_\lambda(z_i^{(j)}) = \chi_{\lambda'}(z_i^{(j)})$ for all $j=0,...,n-1$ then the proof will be complete.

We have $p_i \in S(\g_n^n)$ and it follows from \eqref{e:HSder1} and \eqref{e:HSder2} that $\partial^{(j)}p_i \in S(\g_n^{\ge n-j})$. By the definition of $\omega$ \eqref{e:omegadefn} we have $z_i^{(j)} \in U(\g_n^{\ge n-j})$. Therefore $\chi_\lambda(z_i^{(j)})$ only depends on $\lambda_{\ge n-j}$. Since $\chi_\lambda$ is precisely the composition \eqref{e:centcharalternative}, we have reached the desired conclusion. This completes the proof.
\end{proof}

\begin{proofofextensiontheorem}
Let $\lambda, \lambda'$ satisfy the assumptions of the Theorem. Thanks to Lemma~\ref{L:characterlemma} we must show that $\chi_\lambda(z_i^{(n)}) = \chi_{\lambda'}(z_i^{(n)})$ for all $i$ is equivalent to the condition on $\lambda_0, \lambda_0'$.

Thanks to \eqref{e:propertiesofdp} we have $z_i^{(n)} = \omega(d^{(n)} p_j) + \omega(q_i^n)$. Since $q_i^n \in S(\g_n^{\ge 1})$ it follows from \eqref{e:omegadefn} that $\chi_\lambda(z_i^{(n)}) = \chi_{\lambda'}(z_i^{(n)})$ if and only if $\chi_\lambda\circ \omega \circ d^{(n)}(p_j) = \chi_{\lambda'} \circ\omega \circ d^{(n)}(p_j)$ for all $i=1,...,r$. We note that this second equality is well-defined because $d^{(n)}$ sends $\h$-invariants to $\h$-invariants (Lemma~\ref{L:dandhinvariants}), and $\omega$ is $\h_n$-equivariant, and $\chi_\lambda$ coincides with the composition \eqref{e:centcharalternative}.

Since $\mu := \lambda_{\ge 1} = \lambda'_{\ge 1}$ we can apply Lemma~\ref{L:xiandetaswap} to see that the central characters coincide if and only if $\eta_{\mu}(\lambda_0) = \eta_{\mu}(\lambda_0')$. Let $\pi : \h^* = \h^*/W \cong \C^r$ be the quotient map. It follows from the Chevalley restriction theorem \cite[3.1.37]{CGi} that we can write this in coordinates as $\pi(\nu) = (p_1|_{\h^*}(\nu),...,p_r|_{\h^*}(\nu))$. Now it is easy to see by a direct comparison of the two definitions that
$\eta_{\mu}(\lambda_0)$ coincides with the differential $d_{\lambda_n}\pi(\lambda_0)$ of the quotient map $\pi$.

By \cite[Lemma~4.6(e)]{Ch} we see that $\ker \eta_\mu = \ker d_{\lambda_n} \pi = \C\Phi_{\lambda_n}$. This implies that $\eta_{\mu}(\lambda_0) = \eta_{\mu}(\lambda_0')$ if and only if $\lambda_0 - \lambda_0' \in \C \Phi_{\mu_n}$ (this is where we use the fact that the centraliser in a standard Levi subalgebra). This completes the proof.
$\hfill\qed$
\end{proofofextensiontheorem}

\subsection{Exactness of the invariants functor}
\label{ss:exactinvariants}

In this section we fix a standard parabolic subalgebra $\p$ of $\g$, so that our fixed choice of positive root spaces are contained in $\p$. Pick a Levi decomposition $\p = \mathfrak{l} \oplus \mathfrak{r}$. The main result of this section is that $(\bullet)^{\r_n}$ is exact on $\O^{(\mu)}(\g_n)$ provided $\g^{\mu_n} = \l$. First we need the following Lemma.

\begin{proposition}
\label{highestweightmodinvariants}
Let $M \in \O^{(\mu)}(\g_n)$ be indecomposable module and let $\Xi_M \in \h^* / \C \Phi_{\mu_n}$ be the coset determined by $M$ in Corollary~\ref{C:vermaextensions}.  If $\l = \g^{\mu_n}$ is a standard Levi subalgebra then 
$$M^{\mathfrak{r}_n} = \bigoplus_{\nu \in \Xi_M} M^{\nu}.$$
\end{proposition}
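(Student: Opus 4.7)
\emph{Plan of proof.} The plan is to prove the two containments separately. Since $\l = \g^{\mu_n}$ is in standard Levi form, $\Phi_{\mu_n} = \Phi_\l$ and $\n_n^+ = (\l_n \cap \n_n^+) \oplus \r_n$, with $\l_n \cap \n_n^+$ carrying $\h$-weights in $\Phi_\l^+ \subseteq \C\Phi_{\mu_n}$ and $\r_n$ carrying $\h$-weights in $\Phi^+ \setminus \Phi_\l^+$. Linear independence of simple roots yields $\C\Phi_\l \cap \Z_{\ge 0}\Phi^+ = \Z_{\ge 0}\Phi_\l^+$, so: (a) if $\nu \notin \Xi_M$ and $\alpha \in \Phi_\l^+$ then $\nu + \alpha \notin \Xi_M$, and (b) if $\nu \in \Xi_M$ and $\gamma$ is a nonzero $\Z_{\ge 0}$-combination of roots in $\Phi^+ \setminus \Phi_\l^+$ then $\nu + \gamma \notin \Xi_M$.

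For $M^{\r_n} \subseteq \bigoplus_{\nu \in \Xi_M} M^\nu$, I would argue by contradiction. Since $\r_n$ is $\h$-homogeneous, $M^{\r_n}$ inherits a weight-space decomposition; by~\eqref{fdweightspaces} the weights of $M$ lie in a finite union of cones $\lambda - \Z_{\ge 0}\Phi^+$. Assuming $M^\nu \cap M^{\r_n} \ne 0$ for some $\nu \notin \Xi_M$, choose such $\nu$ maximal in the dominance order and a nonzero $v$ in this intersection. Since $[\l_n, \r_n] \subseteq \r_n$, the subspace $M^{\r_n}$ is $\l_n$-stable, so $(\l_n \cap \n_n^+) \cdot v \subseteq M^{\r_n}$ has weight components of the form $\nu + \alpha$ with $\alpha \in \Phi_\l^+$; by (a) these remain outside $\Xi_M$, so maximality of $\nu$ forces them to vanish, giving $(\l_n \cap \n_n^+) \cdot v = 0$ and hence $\n_n^+ v = 0$. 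Thus $v$ is a maximal vector of $\h$-weight $\nu$, Lemma~\ref{maxhwcorollary} produces a highest weight vector of some $\h_n^*$-weight $\tilde\nu$ with $\tilde\nu_0 = \nu$ generating a highest weight submodule of $M$, and Corollary~\ref{C:vermaextensions} forces $\nu = \tilde\nu_0 \in \Xi_M$, a contradiction.

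For the reverse inclusion, fix $\nu \in \Xi_M$ and $v \in M^\nu$. By $(\calO3)$, $\r_n \subseteq \n_n^+$ acts locally nilpotently, so $U := U(\r_n) \cdot v$ is finite-dimensional with weights in $\nu + \Z_{\ge 0}(\Phi^+ \setminus \Phi_\l^+)$. If $\r_n v \ne 0$ then $U$ contains nonzero weight vectors strictly above $\nu$; choose $w \in U$ of maximal weight $\nu + \gamma$. Any $x \in \r_n$ strictly raises the weight, so $xw = 0$ and $w \in M^{\r_n}$; but by (b), $\nu + \gamma \notin \Xi_M$, contradicting the first inclusion. The main obstacle is the first step, where one must bootstrap from an arbitrary $\r_n$-invariant of bad weight to an $\n_n^+$-maximal vector, and then (via Lemma~\ref{maxhwcorollary}) to a highest weight vector, so that Corollary~\ref{C:vermaextensions} can be brought to bear; the second inclusion then follows cleanly from finite-dimensionality of $U(\r_n) v$.
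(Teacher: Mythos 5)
Your proof is correct and rests on the same basic ingredients as the paper's — the weight combinatorics available once $\l = \g^{\mu_n}$ is a standard Levi, together with Corollary~\ref{C:vermaextensions} — but the argument is organized rather differently, and in one place more carefully. For the inclusion $M^{\r_n} \subseteq \bigoplus_{\nu \in \Xi_M} M^\nu$, the paper takes $v \in M^{\r_n}$ of weight $\nu$, extracts a common $\h_n$-eigenvector $w \in U(\h_n)\cdot v$, and asserts that a quotient of a Verma module $M_{\lambda'}$ then embeds in $M$, before invoking the central-character constraint; this tacitly requires $\n_n^+ w = 0$, which does not follow just from $\r_n w = 0$. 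You repair exactly this point: by choosing $\nu \notin \Xi_M$ \emph{maximal} with $M^\nu \cap M^{\r_n} \ne 0$, using $\l_n$-stability of $M^{\r_n}$ and your observation (a), you force $(\l_n \cap \n_n^+)\cdot v = 0$, so $v$ is a genuine $\n_n^+$-maximal vector, and Lemma~\ref{maxhwcorollary} plus Corollary~\ref{C:vermaextensions} then give the contradiction. For the reverse inclusion the paper argues directly from the highest weight filtration and Corollary~\ref{C:vermaextensions} that for $\nu \in \Xi_M$ and $\alpha \in \Phi^+ \setminus \Phi_{\mu_n}$ the weight $\nu + \alpha$ cannot occur in $M$, hence $\r_n M^\nu = 0$; you instead bootstrap from the already-established forward inclusion, producing an $\r_n$-invariant of weight $\nu + \gamma \notin \Xi_M$ whenever $\r_n v \ne 0$. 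Both routes work: the paper's yields the marginally stronger fact $M^{\nu+\alpha} = 0$, while yours is shorter once the first inclusion is in hand and avoids re-invoking the highest weight filtration.
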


\begin{proof}
Since $M$ is indecomposable and admits a highest weight subquotient of weight $\lambda$ we have $\lambda_n = \mu_n$. Since $\g^{\mu_n}$ is the Levi factor of a standard parabolic, we have $\mathfrak{r} = \vspan\{e_\alpha: \alpha \in \Phi^+ \backslash \Phi_{\lambda_n}\}$. By Lemma~\ref{finitefiltration} we have a finite filtration $0 \subseteq M_1 \subseteq \dots \subseteq M_k = M$ such that $M_i / M_{i-1}$ has highest weight $\lambda^{(i)} \in \h_n^*$. By \eqref{fdweightspaces} the weights of $M$ lie in the set
$\bigcup_{i=1}^k \{\lambda^{(i)}_0 - \sum_{\beta\in \Phi^+} k_\beta\beta \mid k_\beta \in \Z_{\ge 0}\}$.
Furthermore by Corollary~\ref{C:vermaextensions} there is an element $\Xi_M \in \h^*/ \C \Phi_{\mu_n}$ such that $\lambda_0^{(i)} + \C \Phi_{\mu_n} = \Xi_M$ for all $i$. It follows that the weights of $M$ actually lie in the set $\lambda^{(i)} + \C \Phi_{\mu_n} - \sum_{\beta \in \Phi^+} \Z_{\ge 0} \beta$, for any choice of $i$. 

 In particular if $\nu \in \h^*$ satisfies $\nu \in \lambda_0 + \Phi_{\lambda_n}$ then $\nu + \alpha$ does not lie in $\lambda^{(i)}_0 - \sum_{\beta \in \Phi^+} \Z_{\ge 0} \beta$ for any $\alpha \in \Phi^+ \setminus \Phi_{\lambda_n}$ and for any $i$. Therefore $\r_n \cdot M^{\nu} = 0$.

Conversely, suppose $v\in M^{\mathfrak{r}_n}$ is of weight $\nu\in \h^*$. Since $\h_n$ acts locally finitely and preserves weight spaces we can find a common eigenvector for $\h_n$ of weight $\nu$ in $U(\h_n)\cdot v$. Suppose the eigenvalue is $\lambda' \in \h_n^*$ (by assumption $\lambda'_0 = \nu$). Then a quotient of $M_{\lambda'}$ occurs as a submodule of $M$. All highest weight modules are indecomposable (they admit unique maximal submodules) and this forces the generalised central character of $M$ to be $\chi_{\lambda'}$. Now Theorem~\ref{vermaexttheorem} implies that $\nu \in \lambda_0' + \C\Phi_{\mu_n}$. By Corollary~\ref{C:vermaextensions} we see that $\lambda_0'$ and $\lambda_0$ lie in the same coset of $\h^*$ modulo $\C\Phi_{\mu_n}$, and so $\nu \in \lambda_0 + \C\Phi_{\mu_n}$.
\end{proof}


\begin{corollary}
\label{functorexactness}
Suppose that $\mu \in (\h_n^{\ge1})^*$ such that $\g^{\mu_n} = \mathfrak{l}$ is a standard Levi subalgebra. Then the functor $(\bullet)^{\mathfrak{r}_n} : \O^{(\mu)}(\g_n) \to \O^{(\mu)}(\g_n^{\mu_n})$ is exact.
\end{corollary}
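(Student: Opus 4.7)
The plan is to establish right exactness, since $(\bullet)^{\r_n}$ is automatically left exact as a kernel functor. Given a short exact sequence $0 \to M' \to M \to M'' \to 0$ in $\O^{(\mu)}(\g_n)$, I must show that the induced map $M^{\r_n} \to (M'')^{\r_n}$ is surjective.

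I would first reduce to the case that $M$ has a single generalised central character. Applying Lemma~\ref{finitefiltration} together with item (2) of \textsection\ref{ss:hwmodules}, the annihilator of $M$ in $Z(\g_n)$ contains a product $\prod_j \m_{\chi_j}^{k_j}$ of powers of finitely many distinct maximal ideals; the Chinese Remainder Theorem then gives a functorial decomposition $M = \bigoplus_j M_{\chi_j}$ into generalised central character eigenspaces, and the given short exact sequence decomposes accordingly. So it is enough to treat the case $M \in \O^{(\mu)}(\g_n)$ on which $\m_\chi$ acts locally nilpotently for a single character $\chi$.

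In this reduced setting I would show that $M^{\r_n} = \bigoplus_{\nu \in \Xi} M^\nu$ for a fixed coset $\Xi \in \h^*/\C\Phi_{\mu_n}$. By Theorem~\ref{vermaexttheorem} every highest weight subquotient of $M$ has its $\lambda_0$-component in one common coset $\Xi$, and the argument of Proposition~\ref{highestweightmodinvariants} then carries over with indecomposability replaced by the single central character hypothesis. Explicitly, the standard Levi assumption forces $\Phi \cap \C\Phi_{\mu_n} = \Phi_{\mu_n}$, so for $\nu \in \Xi$ and $\alpha \in \Phi^+ \setminus \Phi_{\mu_n}$ the weight $\nu + \alpha$ lies outside the weight support of $M$ (which sits in $\Xi - \sum_{\beta \in \Phi^+} \Z_{\ge 0}\beta$), giving $\r_n \cdot M^\nu = 0$; conversely any $\r_n$-invariant weight vector generates a highest weight submodule with the same generalised central character $\chi$, and Theorem~\ref{vermaexttheorem} forces its weight into $\Xi$.

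The proof then finishes formally: since $\h$ acts semisimply, the surjection $M \twoheadrightarrow M''$ restricts to a surjection on each weight space $M^\nu \twoheadrightarrow (M'')^\nu$, and summing over $\nu \in \Xi$ gives the required surjection $M^{\r_n} \twoheadrightarrow (M'')^{\r_n}$. The substantive content has been concentrated in Theorem~\ref{vermaexttheorem} and Proposition~\ref{highestweightmodinvariants}; the only genuinely new ingredient is the generalised central character reduction, whose main subtlety is the finiteness required to invoke the Chinese Remainder Theorem, which is supplied precisely by the finite highest weight filtration.
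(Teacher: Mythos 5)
Your proof is correct, and it takes a genuinely different route from the paper's. The paper reduces a surjection $M \to N$ to the case where both $M$ and $N$ are \emph{indecomposable} and then quotes Proposition~\ref{highestweightmodinvariants} as stated; this implicitly relies on every object of $\O^{(\mu)}(\g_n)$ decomposing into finitely many indecomposable summands and on the compatibility of those decompositions across a surjection (a point handled by observing that summands with different $\Xi$-cosets cannot map nontrivially to one another). You instead decompose by \emph{generalised central character} via the Chinese Remainder Theorem. This is the coarser, canonical decomposition that the indecomposable one refines, and it has two advantages: it is manifestly functorial, so the short exact sequence splits along it with no further argument; and it isolates exactly the hypothesis that the proof of Proposition~\ref{highestweightmodinvariants} actually uses --- indecomposability enters that proof only through the existence of a single generalised central character (via Corollary~\ref{C:vermaextensions}), and you correctly observe that the whole argument, including the containment $\supp(M) \subseteq \Xi - \Z_{\geq 0}\Phi^+$ and the reliance on the standard Levi condition to deduce $\r_n \cdot M^\nu = 0$, goes through verbatim under the weaker assumption. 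The cost is that you cannot cite Proposition~\ref{highestweightmodinvariants} directly but must note that its proof applies under your hypothesis; the paper's cost is a slightly more delicate ``without loss of generality.'' Both are sound, and your route is arguably the cleaner of the two.

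One small caveat that you inherit from the paper rather than introduce: the converse inclusion $M^{\r_n} \subseteq \bigoplus_{\nu \in \Xi} M^\nu$, in both your version and Proposition~\ref{highestweightmodinvariants}, needs a word more than ``any $\r_n$-invariant weight vector generates a highest weight submodule.'' An $\r_n$-invariant weight vector of weight $\nu$ need not be annihilated by $(\n^+\cap\l)_n$; one must pass inside $U(\b_n)v$ to a vector of weight $\nu'$ maximal in the support, observe that $\nu' - \nu \in \Z_{\geq 0}(\Phi^+ \cap \Phi_{\mu_n}) \subseteq \C\Phi_{\mu_n}$, and then apply Theorem~\ref{vermaexttheorem} to $\nu'$. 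This is a terseness in the source, not a flaw in your argument.
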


\begin{proof}
It suffices to take a surjective morphism $M \to N$ in $\O^{(\mu)}(\g_n)$, and show that the restriction $M^{\mathfrak{r}_n} \rightarrow N^{\mathfrak{r}_n}$ is surjective. Without loss of generality we can assume that $M, N$ are indecomposable. The existence of a nonzero map $M \to N$ forces $\Xi_M = \Xi_N$. Since objects of $\O(\g_n)$ are $\h$-semisimple, it follows that $M \to N$ is surjective on $\h$-weight spaces, and now the result follows immediately from Proposition~\ref{highestweightmodinvariants}.
\end{proof}

\begin{proof}[Proof of Theorem \ref{maintheorem}]
We let $\varphi_M$ and $\psi_N$ be the adjunction morphisms from \eqref{e:adjunctionmorphisms}. 

Note that $\Ind$ is an exact functor because $U(\g_n)$ is free over $U(\p_n)$, thanks to the PBW theorem. Furthermore $(\bullet)^{\r_n}$ is exact by Corollary~\ref{functorexactness}. If we can check that $\varphi_M$ and $\psi_N$ are isomorphisms on highest weight modules, then a standard argument using the length of a highest weight filtration (Lemma~\ref{finitefiltration}) can be used to conclude that $\varphi_M$ and $\psi_N$ are isomorphisms for all $M \in \O^{(\mu)}(\g_n)$ and all $N \in \O^{(\mu)}(\g_n^{\mu_n})$

The map $\psi_M$ is an isomorphism for highest weight modules $M$, thanks to Proposition~\ref{highestweightmodinvariants}.

Now suppose that $N \in \O^{(\mu)}(\g_n)$ is a highest weight module with highest weight generator $v$. Since $\r_n \cdot v = 0$ it follows that $1\otimes v$ lies in the image of $\varphi_N$ and so $\varphi_N$ is surjective. To prove injectivity let $K = \ker(\varphi_N)$ and consider the short exact sequence $K \to \Ind(N^{\r_n}) \to N$. By Corollary~\ref{functorexactness} we have another short exact sequence $K^{\mathfrak{r}_n} \to \Ind(N^{\r_n})^{\r_n} \to N^{\r_n}$.

Now set $M = N^{\r_n}$, which is a highest weight $\g^{\mu_n}_n$-module generated by $v$. The map $\Ind(M)^{\r_n} \to M$ is $U(\g_n)$-equivariant map uniquely determined by $1\otimes m \mapsto m$. Therefore it is the left inverse of $\psi_M$, which we have already shown to be bijective. It follows that $K^{\r_n} = 0$, but since every nonzero object in $\O(\g_n)$ admits a nonzero highest weight vector, it follows that $K = 0$ and so $\varphi_N$ is an isomorphism for highest weight $N$.

This concludes the proof.
\end{proof}

\section{Twisting Functors}
\label{S:twisting}
\subsection{Definition of twisting functors}

Now we proceed to proving Theorem \ref{T:introtwisting}, extending the results of the first author \cite[\textsection 5]{Ch}. Throughout this section we fix a simple root $\alpha \in \Delta$ and make the notation $U := U(\g_n)$ for the sake of brevity.

Recall that a right Ore set $S$ in a non-commutative ring $R$ is a multiplicatively closed set of elements such that for all $r\in R$ and $s\in S$ there exists $r' \in R$ and $s' \in S$ such that $rs' = sr'$. A left Ore set is defined dually. In order for $R$ to admit a right ring of quotients with respect to $S$ it is necessary that $S$ is a right Ore set. When $R$ has no zero divisors the condition is also sufficient, and when $S$ is both a right and left Ore set the left and right fraction fields are isomorphic; see \cite[\textsection 2.1]{MR} for a survey of these facts.

Also recall the notation $e_{\alpha, i}, h_{\alpha, i}, h_{\alpha, i}$ from \eqref{e:introduceehf}. Let $F_\alpha$ be the multiplicative set generated by $\{f_{\alpha,0},...,f_{\alpha, n}\}$ and note that these elements commute amongst themselves. The proof of the following result is almost identical to \cite[Lemma~5.1]{Ch}.
\begin{lemma}
$F_\alpha$ is both a left and right Ore set in $U$. $\hfill\qed$
\end{lemma}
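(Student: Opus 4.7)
The plan is to deduce both Ore conditions from the fact that, for each $i$, the operator $\ad(f_{\alpha,i})$ acts locally nilpotently on $U = U(\g_n)$. Since $U$ is a domain and the generators $f_{\alpha,0},\ldots,f_{\alpha,n}$ of $F_\alpha$ pairwise commute, the left and right Ore conditions for an arbitrary element $s = f_{\alpha,i_1}\cdots f_{\alpha,i_k}\in F_\alpha$ will follow by iterating the single-generator version and then using commutativity to rearrange the $f_{\alpha,i}$'s that emerge on the opposite side back into $F_\alpha$. This reduces the task to checking the two Ore conditions for a single generator $f_{\alpha,i}$.

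For the nilpotence step, I would argue that by the Leibniz rule it suffices to verify that $\ad(f_{\alpha,i})$ is nilpotent on the finite-dimensional Lie algebra $\g_n$. When $i \geq 1$, $\ad(f_{\alpha,i})$ shifts the grading $\g_n = \bigoplus_{j=0}^n \g_n^j$ by $+i$, so $\ad(f_{\alpha,i})^{n+1} = 0$ on $\g_n$. When $i = 0$, the operator $\ad(f_{\alpha,0})$ preserves the grading and acts on each graded piece $\g_n^j \cong \g$ as the classical $\ad(f_\alpha)$, which is nilpotent since $f_\alpha$ is a root vector in $[\g,\g]$.

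Given $u \in U$ and $s = f_{\alpha,i}$, I would choose $N$ with $\ad(s)^N(u) = 0$. Noting that left and right multiplication $L_s$, $R_s$ commute as operators on $U$, and that $\ad(s) = L_s - R_s$, the identity $R_s^N = (L_s - \ad(s))^N$ expands to
\begin{equation*}
u s^N \;=\; \sum_{k=0}^{N}(-1)^k\binom{N}{k} s^{N-k}\ad(s)^k(u).
\end{equation*}
The $k = N$ term vanishes by the choice of $N$, and every remaining summand carries a factor of $s$ on the left; hence $u s^N = s u'$ for some $u' \in U$, which is the right Ore condition for $s$. The mirror identity $L_s^N = (R_s + \ad(s))^N$ yields $s^N u = u'' s$ and hence the left Ore condition.

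I do not expect a serious obstacle, and indeed the excerpt notes that the argument is essentially the same as that of \cite[Lemma~5.1]{Ch}. The only subtlety requiring attention is confirming that when one iterates the single-generator Ore condition to treat a product of generators, the resulting element on the opposite side lies back in $F_\alpha$, which is ensured by pairwise commutativity of the $f_{\alpha,i}$.
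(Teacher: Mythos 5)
Your argument is correct and matches the paper's (hidden) proof in substance: both rest on the nilpotence of $\ad(f_{\alpha,i})$ together with the binomial commutation identity, reduce to a single generator, and assemble the general case using that $F_\alpha$ is commutative and multiplicatively closed. The only cosmetic difference is that you phrase the identity via the operators $L_s$, $R_s$, $\ad(s)$ on all of $U$ and treat both Ore conditions by symmetry, whereas the paper states the identity for $x \in \g_n$, extends over PBW monomials, and obtains the right Ore condition by passing to the opposite algebra.
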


We now wish to explicitly construct the localisation $U$ with respect to $F_\alpha$. It certainly exists, by our previous remarks. Consider the $U$-algebra $U_{\alpha} := U[ F_\alpha^{-1}]$ which is freely generated by symbols $\{f_{\alpha, i}^{-1} \mid i=0,1,...,n\}$, subject to the relations $f_{\alpha, i} f_{\alpha,i}^{-1} = 1$.

We introduce some notation to describe a basis for $U_\alpha$. If 
\begin{eqnarray}
\label{e:introduceklm}
\begin{array}{l}
k : \Phi^+ \times \{0,1,...,n\} \to \Z_{\ge 0},\\
l : \Delta \times \{0,1,...,n\} \to \Z_{\ge 0},\\
m : \Phi^+ \setminus \{\alpha\} \times \{0,1,...,n\} \to \Z_{\ge 0}
\end{array}
\end{eqnarray}
are arbitrary maps of sets then we let
\begin{eqnarray}
\label{e:introducev}
v(k,l,m) = \Big(\prod_{i=0}^n \prod_{\beta \in \Phi^+} e_{\beta, i}^{k_{\beta, i}})
 \Big(\prod_{i=0}^n \prod_{\beta \in \Delta} h_{\beta, i}^{l_{\beta, i}})
  \Big(\prod_{i=0}^n \prod_{\beta \in \Phi^+\setminus \{\alpha\}} f_{\beta, i}^{m_{\beta, i}}\Big) \in U
\end{eqnarray}
where the product is taken with respect to some fixed choice of ordering on the basis of $\g$. More succinctly these elements are precisely the PBW monomials in $U$ which have no factor in $F_\alpha$.

\begin{lemma}
\label{L:basisLemma}
\begin{enumerate}
\item $U_\alpha$ is the (left and right) localisation of $U$ at the Ore set $F_\alpha$. 
\item A basis for $U_\alpha$ is given by the elements $f_{\alpha, 0}^{i_0} \cdots f_{\alpha, n}^{i_n} v(k,l,m)$ where $i_j \in \Z$  and $k,l,m$ are as in \eqref{e:introduceklm}.
\item A basis for $U_\alpha$ is given by the elements $v(k,l,m)f_{\alpha, 0}^{i_0} \cdots f_{\alpha, n}^{i_n}$ where $i_j \in \Z$  and $k,l,m$ are as in \eqref{e:introduceklm}.
\end{enumerate}

\end{lemma}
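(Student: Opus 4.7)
The strategy is to prove (2) (and analogously (3)) directly by establishing a normal form for elements of $U_\alpha$, then to deduce (1) by matching $U_\alpha$ with the classical Ore localization. Let $Q$ denote the Ore localization $U[F_\alpha^{-1}]_{\mathrm{Ore}}$, which exists by the preceding lemma and is a domain containing $U$. The universal property of the $U$-algebra $U_\alpha$ produces a $U$-algebra homomorphism $\phi\colon U_\alpha \to Q$ sending each formal symbol $f_{\alpha,i}^{-1}$ to the genuine inverse in $Q$, and $\phi$ is surjective because $F_\alpha$ is commutative and hence $F_\alpha^{-1}\subset Q$ is generated as a submonoid by the $f_{\alpha,i}^{-1}$. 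Accordingly, (1) will follow once $\phi$ is shown to be injective, which is a consequence of (2).

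The key computational input is the commutation identity
\[
x\, f_{\alpha,j}^{-1} \;=\; \sum_{k=0}^{l-1} f_{\alpha,j}^{-k-1}\, (\ad f_{\alpha,j})^k(x), \qquad x \in \g_n,
\]
valid for any $l$ exceeding the nilpotency index of $\ad(f_{\alpha,j})$ on $\g_n$; such an $l$ exists because $\g_n$ is a finite direct sum of copies of $\g$ and $\ad(f_\alpha)$ is nilpotent on $\g$. This identity is obtained by iterating the elementary rearrangement $x f_{\alpha,j}^{-1} = f_{\alpha,j}^{-1} x + f_{\alpha,j}^{-1}[f_{\alpha,j},x] f_{\alpha,j}^{-1}$, with termination supplied by the nilpotency. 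Let $V\subseteq U_\alpha$ denote the $\C$-span of the elements in (2). It already contains $U$ (via the modified PBW basis of $U$ in which the $f_{\alpha,i}$-factors are listed leftmost in each monomial) and each $f_{\alpha,i}^{-1}$; a careful bookkeeping argument using the displayed identity, the commutativity of $F_\alpha$, and this same modified PBW basis then shows that $V$ is closed under multiplication, so that $V = U_\alpha$.

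Linear independence of the type (2) elements is established by transporting along $\phi$: given a purported relation $\sum c_\gamma f_{\alpha,0}^{i_0}\cdots f_{\alpha,n}^{i_n}v(k,l,m)=0$ in $U_\alpha$, apply $\phi$ to land in $Q$ and then multiply on the left by $f_{\alpha,0}^{N_0}\cdots f_{\alpha,n}^{N_n}$ for each $N_j$ larger than every $|i_j|$ occurring. This operation is injective in $Q$ because the $f_{\alpha,j}$ are units there, and the result is a relation inside $U\subseteq Q$ among distinct elements of the modified PBW basis, forcing every $c_\gamma=0$. This proves (2); in particular $\phi$ carries the basis of $U_\alpha$ from (2) to a linearly independent set in $Q$, so $\phi$ is injective, and combined with the earlier surjectivity this yields (1). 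Part (3) is obtained by the mirror-image argument using the opposite PBW ordering of $U$ and the analogous identity $f_{\alpha,j}^{-1}x = \sum_{k=0}^{l-1}(-1)^k(\ad f_{\alpha,j})^k(x)\, f_{\alpha,j}^{-k-1}$.

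The main obstacle is the closure of $V$ under multiplication: a direct application of the commutation identity introduces a growing number of $f_{\alpha,j}^{-1}$ symbols at each step, so a naive induction on the number of inverses fails to terminate. The remedy is to first exploit the commutativity of $F_\alpha$ to collect all $f_{\alpha,i}^{\pm 1}$ factors on the left of each monomial, and only then apply the commutation identity to commute the isolated $U$-part past the block of negative $f_\alpha$-powers. This cleanly separates the non-terminating rewriting from the classical PBW normalization inside $U$, after which a joint induction on the length of the $U$-factor and the number of adjacent formal inverses terminates.
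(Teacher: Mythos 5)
Your argument is correct and rests on the same two pillars as the paper's proof: for spanning, move every $f_{\alpha,j}^{-1}$ to the left of a monomial and then normalise the remaining factor inside $U$ using a PBW ordering with the $f_{\alpha,j}$ leftmost; for linear independence, clear denominators by left-multiplying by a suitable $f_{\alpha,0}^{N_0}\cdots f_{\alpha,n}^{N_n}$ to land on distinct PBW monomials of $U$. The organisation, however, is reversed. The paper establishes (1) first, by observing that the presentation of $U_\alpha$ realises the universal property of the localisation (so $U_\alpha$ coincides with both the left and right Ore fraction ring by the cited result of McConnell--Robson), which already gives the embedding $U\hookrightarrow U_\alpha$ used to run the clearing-denominators argument entirely inside $U_\alpha$. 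You prove (2) first, transporting the linear-independence computation across a surjection $\phi\colon U_\alpha\to Q$ to the abstract Ore localisation $Q$ (where $U\hookrightarrow Q$ is already known), and then read off (1) from the injectivity of $\phi$. Both routes are sound; the paper's is more economical since it invokes the abstract left Ore condition rather than the explicit iterated commutator formula, while yours is more self-contained in that it does not rely on recognising $U_\alpha$ as satisfying the universal property up front. Your final paragraph's phrase ``first \ldots collect all $f_{\alpha,i}^{\pm1}$ factors on the left'' is slightly misleading, since that collection step is itself the nontrivial rewriting, but the surrounding description makes clear you mean the correct thing: nilpotency of $\ad f_{\alpha,j}$ bounds the depth of the iterated commutators, and the appropriate induction is on the number of $\g_n$-factors standing to the left of some inverse symbol, which strictly decreases even as the count of inverse symbols may grow.
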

\begin{proof}
$U_\alpha$ satisfies the universal property of the localisation by construction, and so it is isomorphic to both the left and right localisation, thanks to \cite[Corollary~2.1.4]{MR}. This proves (1), and also implies that $U$ embeds inside $U_\alpha$, and $U_\alpha$ is an integral domain.

Note that $U_\alpha$ is spanned by unordered monomials in $\g_n$ and $F_\alpha^{-1}$. Using the left Ore condition we can rewrite any such monomial as a span of monomials of the form described in (2). Furthermore, if there is a linear dependence between the latter monomials, we can left multiply by appropriate elements of $F_\alpha$ to obtain a linear dependence between PBW monomials in $U$, which must be zero. This proves (2), and (3) follows by a symmetrical argument.
\end{proof}

We will need more precise relations between generators of $U_\alpha$.
\begin{lemma}
\label{relationslemma}
For any $i, j \in \Z$, the following relations hold in $U_\alpha$, for any $h \in \h$ and any $\beta \in \Phi^+ \setminus \{\alpha\}$:
\begin{align}
\label{relation1}
[e_{\alpha, i}, f_{\alpha, j}^{-1}] &= - f_{\alpha, j}^{-2} h_{\alpha, i+j} - 2 f_{\alpha, j}^{-3} f_{\alpha, i+2j}.\\
\label{relation2}
[h_i, f_{\alpha, j}^{-1}] &= \alpha(h) f_{\alpha, j}^{-2} f_{\alpha, i+j}. \\
\label{relation3}
[e_{\beta, i}, f_{\alpha, j}^{-1}] &= a f_{\alpha, j}^{-2} e_{\beta-\alpha, i+j} + b f_{\alpha, j}^{-3} e_{\beta-2\alpha, i+2j} + c f_{\alpha, j}^{-4} e_{\beta-3\alpha, i+3j}
\end{align}
for some $a,b,c \in \C$. We adopt the convention $e_{\gamma, i} = 0$ if $\gamma \notin \Phi$ or if $i > n$.
\end{lemma}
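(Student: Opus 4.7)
The crux is the elementary identity
\[ [x, y^{-1}] = -y^{-1}[x,y] y^{-1} \qquad (\dagger) \]
valid for any $x \in U_\alpha$ and any $y \in F_\alpha$, which follows by applying the derivation property of $[\cdot, -]$ to $0 = [x, 1] = [x, y y^{-1}]$. All three relations will be derived by applying $(\dagger)$ once or more, combined with elementary Lie brackets in $\g_n$ and the observation that the $f_{\alpha, j}$ commute amongst themselves, because $\g_{-\alpha}$ is one-dimensional.

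For \eqref{relation1}, apply $(\dagger)$ with $x = e_{\alpha, i}$ and $y = f_{\alpha, j}$; using $[e_{\alpha, i}, f_{\alpha, j}] = h_{\alpha, i+j}$ this gives $[e_{\alpha, i}, f_{\alpha, j}^{-1}] = -f_{\alpha, j}^{-1} h_{\alpha, i+j} f_{\alpha, j}^{-1}$. To reorder the central factor onto the right, apply $(\dagger)$ a second time: since $[h_{\alpha, i+j}, f_{\alpha, j}] = -2 f_{\alpha, i+2j}$ and $f_{\alpha, i+2j}$ commutes with $f_{\alpha, j}$, one obtains $[h_{\alpha, i+j}, f_{\alpha, j}^{-1}] = 2 f_{\alpha, j}^{-2} f_{\alpha, i+2j}$, and substituting yields \eqref{relation1}. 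For \eqref{relation2}, a single application of $(\dagger)$ is already enough, since the output $f_{\alpha, i+j}$ of the first commutator commutes with $f_{\alpha, j}$.

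Relation \eqref{relation3} is obtained by iterating this procedure. The structural input is the bound of $4$ on the length of any $\alpha$-string through a root in a reduced root system (achieved only in $G_2$); in particular $\beta - 4\alpha \notin \Phi$, so the convention $e_{\gamma, i} = 0$ for $\gamma \notin \Phi$ forces $(\ad f_{\alpha, j})^4(e_{\beta, i}) = 0$. Writing $[e_{\beta}, f_{\alpha}] = a' e_{\beta-\alpha}$ for the relevant structure constant, one application of $(\dagger)$ gives $[e_{\beta, i}, f_{\alpha, j}^{-1}] = -a' f_{\alpha, j}^{-1} e_{\beta-\alpha, i+j} f_{\alpha, j}^{-1}$. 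Commuting the central factor past the right-hand $f_{\alpha, j}^{-1}$ via another application of $(\dagger)$ yields the first claimed term, plus a remainder proportional to $f_{\alpha, j}^{-2} e_{\beta-2\alpha, i+2j} f_{\alpha, j}^{-1}$. A second iteration extracts the middle term and leaves a remainder involving $e_{\beta - 3\alpha, i+3j}$; a third iteration extracts the final term, and the next step would involve $e_{\beta - 4\alpha, i+4j} = 0$, so the expansion terminates. The coefficients $a, b, c$ are signed products of the structure constants along the $\alpha$-string, but the lemma asserts only their existence.

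The principal obstacle is the bookkeeping for \eqref{relation3}: at each stage one must verify that the remainder factors as the correct power of $f_{\alpha, j}^{-1}$ on the left times the next element of the $\alpha$-string on the right, and that the iteration terminates after exactly three steps thanks to the root-string length bound. Beyond this, the argument is a direct sequence of applications of $(\dagger)$.
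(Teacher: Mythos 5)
Your proposal is correct and is essentially the paper's approach: the paper verifies each relation by clearing denominators (multiplying through by suitable powers of $f_{\alpha,j}$ to reduce to an identity in $U$, then multiplying by the inverses), which is the same bookkeeping you organize via iterated applications of the identity $[x,y^{-1}]=-y^{-1}[x,y]y^{-1}$. Your iterative formulation is a little more systematic and makes the termination of \eqref{relation3} via the root-string length bound visibly clean, but the underlying computation is the same.
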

\begin{proof}
These can be verified by multiplying by powers of $f_{\alpha, j}$ to obtain an expression which holds in $U$. We show the calculation for (\ref{relation1}); the other relations are proved similarly. Using the relations in $U$ we have:
\begin{align*}
f_{\alpha, j}^3 e_{\alpha, i} = f_{\alpha, j}^2 e_{\alpha, i} f_{\alpha, j} - f_{\alpha, j} h_{\alpha, i+j} f_{\alpha, j} - 2 f_{\alpha, i+2j} f_{\alpha, j}.
\end{align*}
We then multiply on the left by $f_{\alpha, j}^{-3}$ and on the right by $f_{\alpha, j}^{-1}$ to obtain (\ref{relation1}).
\end{proof}

Let $V_\alpha \subseteq U_\alpha$ be the span of the monomials appearing in Lemma~\ref{L:basisLemma}(2) such that $i_j \ge 0$ for some $j = 0,...,n$. Using an argument identical to the one used in the second half of the proof of \cite[Lemma~5.3]{Ch} we see that $V_\alpha$ can be defined symmetrically as the the span of the monomials appearing in Lemma~\ref{L:basisLemma}(3), subject to the condition $i_j \ge 0$ for some $j$. Using an argument identical to {\it loc. cit.} once again we obtain the following.

\begin{lemma}
\label{subbimodulelemma}
$V_\alpha$ is a $U$-$U$-sub-bimodule of $U_\alpha$.  $\hfill \qed$
\end{lemma}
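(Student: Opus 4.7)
The plan is to prove $U \cdot V_\alpha \subseteq V_\alpha$; the companion statement $V_\alpha \cdot U \subseteq V_\alpha$ follows by a symmetric argument based on the right-handed basis description of $V_\alpha$ from Lemma~\ref{L:basisLemma}(3), exactly as in the second half of the proof of \cite[Lemma~5.3]{Ch}. By linearity it suffices to verify $x \cdot b \in V_\alpha$ for every generator $x \in \g_n$ and every basis monomial $b = f_{\alpha, 0}^{i_0} \cdots f_{\alpha, n}^{i_n} v(k, l, m) \in V_\alpha$, that is, with at least one $i_{j_0} \ge 0$.

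The cleanest route is to decompose $V_\alpha$ into subspaces indexed by the position of a non-negative $f_\alpha$-exponent. For each $j \in \{0, 1, \ldots, n\}$ let
\[
V_\alpha^{(j)} := \vspan\bigl\{f_{\alpha, 0}^{i_0} \cdots f_{\alpha, n}^{i_n} v(k, l, m) : i_j \ge 0\bigr\}.
\]
Every basis monomial of $V_\alpha$ lies in some $V_\alpha^{(j_0)}$, so $V_\alpha = \sum_{j=0}^n V_\alpha^{(j)}$; since a finite sum of sub-bimodules is again a sub-bimodule, it suffices to show that each $V_\alpha^{(j)}$ is closed under left multiplication by the generators of $U$. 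If $x \in F_\alpha$ this is immediate from the commutativity of $F_\alpha$: left multiplication by $f_{\alpha, s}$ either preserves the $j$-exponent (if $s \neq j$) or increases it by one. Otherwise, I would compute $x \cdot b$ by moving $x$ rightward through the $f_\alpha$-string, one factor at a time, applying relations (\ref{relation1})--(\ref{relation3}).

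The key structural observation is that passage past a factor $f_{\alpha, j''}^{i_{j''}}$ with $i_{j''} \ge 0$ (including the distinguished factor at position $j$ itself) stays inside $U$ and introduces no negative $f_\alpha$-powers, whereas passage past $f_{\alpha, j'}^{-1}$ (necessarily with $j' \neq j$) yields commutator corrections of the form $f_{\alpha, j'}^{-p} z$, in which $z$ is either $h_{\alpha, \bullet}$, an element $e_{\gamma, \bullet}$ with $\gamma \in \Phi$, or a fresh factor $f_{\alpha, q} \in F_\alpha$. In every such correction the $j$-exponent is unchanged (when $z \notin F_\alpha$, or when $z = f_{\alpha, q}$ with $q \neq j$) or is increased by one (when $q = j$); the newly introduced elements $h_{\alpha, \bullet}$ and $e_{\gamma, \bullet}$ must themselves be commuted further rightward through any remaining $f_\alpha$'s, but this recursion produces corrections of the same shape and hence preserves the invariant. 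The process terminates after finitely many steps because each step moves a non-$F_\alpha$ element strictly rightward through a finite string, and once everything is collected in the standard basis form of Lemma~\ref{L:basisLemma}(2) via PBW in $\g_n$ (which does not involve $F_\alpha$), every resulting monomial has $j$-exponent $\ge 0$. The main technical obstacle is the bookkeeping for the recursive corrections from relation (\ref{relation3}), which introduces only additional negative $f_\alpha$-powers without compensating positive ones; this is handled by observing that all negative-exponent commutations take place at indices $j' \neq j$, so the distinguished $j$-exponent is never disturbed, completing the verification.
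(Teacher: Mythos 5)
Your proposal is correct but takes a more laborious route than the paper's. The paper's argument (following \cite[Lemma~5.3]{Ch}) proves one-sided closure via a single PBW observation: with $b = f_{\alpha,0}^{i_0}\cdots f_{\alpha,n}^{i_n}v(k,l,m)$ and a basis monomial $c = f_{\alpha,0}^{j_0}\cdots f_{\alpha,n}^{j_n}v'(k',l',m')$ of $U$, the inner product $v(k,l,m)\,f_{\alpha,0}^{j_0}\cdots f_{\alpha,n}^{j_n}\,v'(k',l',m')$ lies entirely in $U$, so its PBW expansion has all $f_\alpha$-exponents $\ge 0$; prepending the prefix $f_{\alpha,0}^{i_0}\cdots f_{\alpha,n}^{i_n}$ can only increase exponents, so the $j$-position with $i_j \ge 0$ stays nonnegative. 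No commutator relations (\ref{relation1})--(\ref{relation3}) are needed and no recursion has to be tracked. Left closure then follows by the mirror argument with the basis of Lemma~\ref{L:basisLemma}(3). Your approach instead introduces the decomposition $V_\alpha = \sum_j V_\alpha^{(j)}$ by the position of the nonnegative exponent and verifies closure under $\g_n$ by passing generators through the $f_\alpha$-string with (\ref{relation1})--(\ref{relation3}) and controlling the recursive corrections. This is correct (and you do isolate the right invariant, namely that the $j$-exponent can never go negative because any pass through $f_{\alpha,j}^{i_j}$ with $i_j \ge 0$ is an ordinary $U$-computation), but the termination and bookkeeping you flag as ``the main technical obstacle'' are precisely the work the paper's PBW argument avoids. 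Note also that the pieces $V_\alpha^{(j)}$ you define are nothing but the intermediate localizations $U[f_{\alpha,j'}^{-1} : j' \neq j]$ sitting inside $U_\alpha$, i.e.\ they are subrings; recognising this would let you skip the commutation bookkeeping entirely, since a subring containing $U$ is automatically a $U$-$U$-sub-bimodule. One small wording slip: you write ``a finite sum of sub-bimodules is again a sub-bimodule, it suffices to show each $V_\alpha^{(j)}$ is closed under left multiplication,'' but your computation only establishes left closure of each piece, so the sentence should invoke a sum of left submodules; the right-sided half is then supplied by your opening reduction via basis (3), which is fine.
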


We now consider the $U$-$U$-bimodule
\begin{eqnarray}
S_\alpha := U_\alpha / V_\alpha.
\end{eqnarray}
By Lemma~\ref{L:basisLemma}, we see that $S_\alpha$ has a basis given by
\begin{eqnarray}
\label{e:onebasis}
\{f_{\alpha, 0}^{i_0} \dots f_{\alpha, n}^{i_n} v(k,l,m) \mid \text{ for } v, k, l, m \text{ as per \eqref{e:introduceklm}, \eqref{e:introducev} and } i_j < 0 \text{ for all } j \}
\end{eqnarray}
and another basis given by:
\begin{eqnarray}
\label{e:anotherbasis}
\{ v(k,l,m) f_{\alpha, 0}^{i_0} \dots f_{\alpha, n}^{i_n} \mid \text{ for } v, k, l, m \text{ as per \eqref{e:introduceklm}, \eqref{e:introducev} and } i_j < 0 \text{ for all } j \}
\end{eqnarray}
As a slight abuse of notation we denote an element of $U_\alpha$ and its coset in $S_\alpha$ by the same symbol. 

Now we pick a special automorphism of $\g$. The simple root $\alpha$ which we have fixed throughout this section gives rise to a reflection $s_\alpha \in W = N_G(\h) / \h$. We lift $s_\alpha$ arbitrarily to an element of $N_G(\h)$, which defines an automorphism of $\g$ via the adjoint representation. We denote this automorphism by $\phi_\alpha$. 

Note that $\phi_\alpha$ acts on the root spaces as $s_\alpha$, i.e. it sends $\g_\beta$ to $\g_{s_\alpha(\beta)}$ and preserves $\h$. Furthermore, after rescaling $e_\alpha$ and $f_\alpha$ if necessary, we may assume that:
\begin{align*}
\phi_\alpha(e_\alpha) &= f_\alpha\\
\phi_\alpha(f_\alpha) &= e_\alpha.
\end{align*}

We extend $\phi_\alpha$ to an automorphism of $\g_n$ by the rule $\phi_\alpha(x_i) = \phi_\alpha(x)_i$ for all $x \in \g$.

If $M$ is a left $U$-module, we denote by $\phi_\alpha(M)$ the module obtained by twisting the left action on $M$ by $\phi_\alpha$ and write $\cdot_\alpha$ for this action. More precisely, if $m \in M$ and $u \in U$ then $u \cdot_\alpha m := \phi_\alpha(u) \cdot m$. Similarly, we can similarly twist the action by $\phi_\alpha^{-1}$,  and use notation $\cdot_{\alpha^{-1}}$ in this case.

Let $\mathcal{C}$ be the full subcategory of $U$-mod whose objects are the $\h$-semisimple modules. The proof of the next result is the same as \cite[Lemma~5.9]{Ch}.
\begin{lemma}
\label{extensionlemma}
$\O(\g_n)$ is a Serre subcategory of the category of $\mathcal{C}$. $\hfill \qed$
\end{lemma}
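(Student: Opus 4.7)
The plan is to verify that $\O(\g_n)$ is closed under extensions within $\mathcal{C}$; closure under subobjects and quotients was already recorded in the discussion immediately after Definition~\ref{catOdef}. So I will take a short exact sequence $0 \to M' \to M \to M'' \to 0$ in $\mathcal{C}$ with $M', M'' \in \O(\g_n)$, and verify axioms $(\O 1)$--$(\O 3)$ for $M$.

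Axiom $(\O 2)$ is the defining condition for membership in $\mathcal{C}$, so it holds by hypothesis. Condition $(\O 1)$ is routine: lifting a finite generating set of $M''$ to $M$ and combining it with a finite generating set of $M'$ gives a finite generating set of $M$.

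The only non-formal point is $(\O 3)$, i.e.\ the local finiteness of the $\h_n^{\ge 1} \oplus \n_n^+$-action on $M$. Fix $m \in M$ with image $\bar m \in M''$, and set $V'' := U(\h_n^{\ge 1} \oplus \n_n^+) \cdot \bar m$, which is finite-dimensional since $M''$ satisfies $(\O 3)$. Lift a basis of $V''$ to elements $m = m_1, m_2, \ldots, m_k \in M$. Fix a basis $x_1, \ldots, x_p$ of the (finite-dimensional) Lie subalgebra $\h_n^{\ge 1} \oplus \n_n^+$; for each pair $(i,j)$, the element $x_i \cdot m_j$ has image in $V''$, so subtracting a suitable linear combination of the $m_l$ yields an element $n_{ij} \in M'$. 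Since $M' \in \O(\g_n)$, the space $V' := U(\h_n^{\ge 1} \oplus \n_n^+) \cdot \{n_{ij}\}_{i,j} \subseteq M'$ is finite-dimensional. Setting $W := V' + \vspan(m_1, \ldots, m_k)$, a direct calculation using the definition of the $n_{ij}$ shows that $W$ is stable under each $x_i$, hence under $\h_n^{\ge 1} \oplus \n_n^+$; consequently $U(\h_n^{\ge 1} \oplus \n_n^+) \cdot m \subseteq W$ is finite-dimensional, giving $(\O 3)$.

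The argument is essentially bookkeeping and no conceptual obstacle appears: passage from $n = 1$ to general $n$ changes nothing since the only ingredient used is finite-dimensionality of $\h_n^{\ge 1} \oplus \n_n^+$ and the axioms for $M'$ and $M''$. This justifies the author's deferral to the proof of \cite[Lemma~5.9]{Ch}.
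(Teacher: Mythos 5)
Your argument is correct, and the only nontrivial step is indeed $(\O3)$; $(\O1)$ and $(\O2)$ are handled exactly as you say. However, the route you take for $(\O3)$ differs from the paper's. Your proof is a direct lifting argument: you lift a basis of $V'' = U(\h_n^{\ge 1}\oplus\n_n^+)\cdot\bar m$, collect the $M'$-valued error terms $n_{ij}$, take their $U(\h_n^{\ge 1}\oplus\n_n^+)$-closure $V'$ (finite-dimensional by $(\O3)$ for $M'$), and verify that $W = V' + \vspan(m_1,\dots,m_k)$ is invariant. This is a general-purpose argument showing that local finiteness of a finite-dimensional Lie algebra action is closed under extensions, and it does not use $\h$-semisimplicity of $M$ at all beyond its role in $(\O2)$. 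The paper's proof (following \cite[Lemma~5.9]{Ch}, whose sketch can be seen in the commented source) instead exploits the weight structure: since $\dim M^\lambda = \dim (M')^\lambda + \dim (M'')^\lambda$, the module $M$ has finite-dimensional weight spaces with support bounded above as in \eqref{fdweightspaces}, and then $(\O3)$ is immediate because $\n_n^+$ raises weights into a finite region and $\h_n^{\ge 1}$ preserves the finite-dimensional weight spaces. Your version is more elementary and self-contained (it would work for any finite-dimensional Lie algebra, with no weight hypotheses), while the paper's is shorter once the weight-space dimension count is observed and has the side benefit of also yielding the finite-dimensional weight space property \eqref{fdweightspaces} for $M$, which is used elsewhere. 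Both are valid proofs.
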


We define a functor $\mathcal{H}$ from $U$-mod to $\mathcal{C}$ by letting $\mathcal{H}(M)$ be the sum of the $\h$-weight spaces in $M$. 

Now we define two endofunctors of $\mathcal{C}$ by setting
\begin{align}
\label{e:introduceTandG}
T_\alpha M &:= \phi_\alpha(S_\alpha \otimes_U M) \\
G_\alpha M &:= \mathcal{H}(\Hom_U(S_\alpha, \phi_\alpha^{-1}(M))).
\end{align}
Note that the action of $U$ on $G_\alpha M$ is given by
$$(u \cdot f)(s) = f(s \cdot u) \text { for } u \in U, \ f \in G_\alpha M, \ s \in S_\alpha.$$
Also note that if $M, N \in \mathcal{C}$ and $\chi\in \Hom_{\mathcal{C}}(M,N)$, then $T_\alpha(\chi): T_\alpha M \rightarrow T_\alpha N$ and $G_\alpha(\chi): G_\alpha M \rightarrow G_\alpha N$ are given by
\begin{eqnarray*}
& & T_\alpha(\chi)(s \otimes m) = s \otimes \chi(m) \text{ for } s \in S_\alpha, \ m \in M,\\ & & G_\alpha(\chi)(\rho) = \chi \circ \rho \text{ for } \rho \in \Hom_U(S_\alpha, \phi_\alpha^{-1}(M))).
\end{eqnarray*}

In order to see these functors are well defined, the only non-trivial check is that $T_\alpha M \in \mathcal{C}$ for any $M \in \mathcal{C}$. For $M \in \mathcal{C}$, we see that $T_\alpha M$ is spanned by $\{f_{\alpha, 0}^{-i_0} \dots f_{\alpha, n}^{-i_n} \otimes w : i_j > 0, w \in \mathcal{H}(M)\}$. For $w \in M^{\lambda}$ and $h \in \h$ we have
\begin{align*}
h \cdot_\alpha (f_{\alpha, 0}^{-i_0} \dots f_{\alpha, n}^{-i_n} \otimes w) &= (s_\alpha(h)f_{\alpha, 0}^{-i_0} \dots f_{\alpha, n}^{-i_n}) \otimes w \\
&= f_{\alpha, 0}^{-i_0} \dots f_{\alpha, n}^{-i_n} (s_\alpha(h) - (i_0 + \dots + i_n) \alpha(h))\otimes w \\
&= f_{\alpha, 0}^{-i_0} \dots f_{\alpha, n}^{-i_n} \otimes (s_\alpha(h) - (i_0 + \dots + i_n) \alpha(h)) w \\
&= f_{\alpha, 0}^{-i_0} \dots f_{\alpha, n}^{-i_n} \otimes (s_\alpha(\lambda) - (i_0 + \dots + i_n)\alpha)(h) w \\
&= (s_\alpha(\lambda) - (i_0 + \dots + i_n)\alpha)(h) (f_{\alpha, 0}^{-i_0} \dots f_{\alpha, n}^{-i_n} \otimes w).
\end{align*}
To summarise, for any $w \in M$ of weight $\lambda$ we have
\begin{align}
\label{tensorweight}
f_{\alpha, 0}^{-i_0} \dots f_{\alpha, n}^{-i_n} \otimes w \in (T_\alpha M)^{s_\alpha(\lambda) - (i_0 + \dots + i_n)\alpha}.
\end{align}
In particular, $T_\alpha M$ is spanned by weight vectors, and so is $\h$-semisimple. The following result can be proven by a calculation almost identical to \cite[Lemma~5.4]{Ch}.

\begin{lemma}
\label{weightfunctionslemma}
Let $g \in \Hom_U(S_\alpha, \phi_\alpha^{-1}(M))$. Then $g$ has weight $\lambda\in \h^*$ if and only if $g(f_{\alpha, 0}^{-i_0} \dots f_{\alpha, n}^{-i_n})$ has weight $\lambda + (i_0 + \dots + i_n)\alpha$ in $\phi^{-1}_\alpha(M)$ for all $i_0,...,i_n \in \Z_{\ge 0}$, which is if and only if this vector has weight $s_\alpha(\lambda) - (i_0 + \dots + i_n) \alpha$ in $M$.
\end{lemma}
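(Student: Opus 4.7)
The plan is to reduce the lemma to a single commutator computation, after which everything follows by tracking the twists $\phi_\alpha^{\pm 1}$ on weights. Set $s := f_{\alpha,0}^{-i_0}\cdots f_{\alpha,n}^{-i_n}$ and $|i| := i_0+\cdots+i_n$. The first identity I would establish is
\begin{equation*}
sh \;=\; (h - |i|\,\alpha(h))\,s \qquad \text{in } U_\alpha,
\end{equation*}
which follows by inverting the commutator $[h,f_{\alpha,j}] = -\alpha(h) f_{\alpha,j}$ — equivalently, by specialising~\eqref{relation2} at $i=0$ and noting $f_{\alpha,j}^{-2} f_{\alpha,j} = f_{\alpha,j}^{-1}$ — to get $f_{\alpha,j}^{-1} h = (h-\alpha(h)) f_{\alpha,j}^{-1}$, and then iterating. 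Because the $f_{\alpha,j}^{-1}$ commute among themselves, the induction on $|i|$ is immediate.

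Next I would unpack the weight condition on $g$. By the definition of the $U$-action on $G_\alpha M$ and the $U$-linearity of $g$ into the twisted module $\phi_\alpha^{-1}(M)$, one has $(h\cdot g)(s) = g(sh)$ and $g(hs) = \phi_\alpha^{-1}(h)\cdot g(s) = s_\alpha(h)\cdot g(s)$ as an element of $M$. Combining these with the displayed identity gives
\begin{equation*}
(h\cdot g)(s) \;=\; s_\alpha(h)\cdot g(s) \;-\; |i|\,\alpha(h)\, g(s).
\end{equation*}
Hence $h\cdot g = \lambda(h)g$ is equivalent, on such $s$, to $s_\alpha(h)\cdot g(s) = (\lambda + |i|\alpha)(h)\, g(s)$ for all $h \in \h$. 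Replacing $h$ by $s_\alpha(h)$ and using $s_\alpha^2 = \id$ and $s_\alpha(\alpha) = -\alpha$, this translates to $g(s)$ having weight $s_\alpha(\lambda) - |i|\alpha$ in $M$; the equivalent statement that $g(s)$ has weight $\lambda + |i|\alpha$ in $\phi_\alpha^{-1}(M)$ is then the general fact that twisting by $\phi_\alpha^{-1}$ sends an $M$-weight $\nu$ to the $\phi_\alpha^{-1}(M)$-weight $s_\alpha(\nu)$.

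For the converse direction I still need to check that the weight condition on the pure $f$-monomials alone implies that $g$ is a $\lambda$-weight vector on all of $S_\alpha$. This uses the basis~\eqref{e:anotherbasis}: every basis element has the form $v\cdot s$ with $v \in U$ a PBW monomial, and the $U$-linearity $g(v\cdot s) = \phi_\alpha^{-1}(v)\cdot g(s)$ reduces the check on $(vs)h = v(sh)$ back to the pure case via the identity $sh = (h-|i|\alpha(h))\,s$ already established. I do not expect any real obstacle: once the single commutator $[h,f_{\alpha,j}^{-1}]$ has been computed, the rest is careful bookkeeping among the three actions on $M$, on $\phi_\alpha^{-1}(M)$, and on $G_\alpha M$, as in the analogous Lemma~5.4 of~\cite{Ch}.
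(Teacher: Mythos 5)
Your proposal is correct and proceeds exactly by the kind of direct calculation the paper has in mind when it refers the reader to \cite[Lemma~5.4]{Ch}. The key commutator identity $f_{\alpha,j}^{-1}h = (h-\alpha(h))f_{\alpha,j}^{-1}$ does follow by specialising~\eqref{relation2} at $i=0$, the bookkeeping through $\phi_\alpha^{-1}|_{\h} = s_\alpha$ is right, and the reduction of the converse to pure $f$-monomials via the basis~\eqref{e:anotherbasis} and left $U$-linearity of $g$ closes the argument cleanly.
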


We immediately obtain the following consequence.
\begin{corollary}
\label{weightfunctionscor}
Let $g \in \Hom_U(S_\alpha, \phi_\alpha^{-1}(M))$, Then $g$ is a weight vector if and only if $g(f_{\alpha, 0}^{-i_0} \dots f_{\alpha, n}^{-i_n})$ is a weight vector for all $i, j \geq 0$.  In particular, $G_\alpha(M)$ is the direct sum of such vectors $g$.
\end{corollary}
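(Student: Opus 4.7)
The plan is to derive the corollary directly from Lemma~\ref{weightfunctionslemma} together with the $\h$-semisimplicity of $M$. The forward direction is immediate: if $g$ has weight $\lambda \in \h^*$, then by Lemma~\ref{weightfunctionslemma} each vector $g(f_{\alpha,0}^{-i_0}\cdots f_{\alpha,n}^{-i_n})$ lies in the single weight space $M^{s_\alpha(\lambda) - (i_0+\cdots+i_n)\alpha}$ and is therefore a weight vector in $M$.

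For the backward direction, suppose each $g(f_{\alpha,0}^{-i_0}\cdots f_{\alpha,n}^{-i_n})$ is a weight vector in $M$. For each $\lambda \in \h^*$ I would define $g_\lambda \in \Hom_U(S_\alpha, \phi_\alpha^{-1}(M))$ by sending $f_{\alpha,0}^{-i_0}\cdots f_{\alpha,n}^{-i_n}$ to the projection of $g(f_{\alpha,0}^{-i_0}\cdots f_{\alpha,n}^{-i_n})$ onto the weight space $M^{s_\alpha(\lambda) - (i_0+\cdots+i_n)\alpha}$, and extending $U$-linearly. By construction only finitely many $g_\lambda$ contribute on any fixed generator, and Lemma~\ref{weightfunctionslemma} ensures that each non-zero $g_\lambda$ is a weight vector of weight $\lambda$. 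The equality $g = \sum_\lambda g_\lambda$ then exhibits $g$ as an element of the sum of $\h$-weight spaces in $\Hom_U(S_\alpha,\phi_\alpha^{-1}(M))$, which is precisely $G_\alpha(M)$; this also yields the last assertion of the corollary.

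The main technical step is checking that each $g_\lambda$ so defined is a well-defined $U$-module homomorphism. This rests on the observation that $S_\alpha$ carries a natural $\ad(\h)$-grading for which the basis vectors in \eqref{e:onebasis} are homogeneous, and that the relations among the generators encoded by Lemma~\ref{relationslemma} respect this grading. Combined with the $\h$-semisimplicity of $\phi_\alpha^{-1}(M)$, this guarantees that the $\h$-weight projections commute with the $U$-action on the generating set $\{f_{\alpha,0}^{-i_0}\cdots f_{\alpha,n}^{-i_n}\}$ of $S_\alpha$, so that the prescription defining $g_\lambda$ extends consistently.
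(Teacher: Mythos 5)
Your forward direction is correct. However, your backward direction establishes only that $g$ lies in $G_\alpha(M)$ — that it is a \emph{sum} of weight vectors — whereas the corollary asserts the stronger claim that $g$ is itself a \emph{single} weight vector. Ending the argument at the decomposition $g=\sum_\lambda g_\lambda$ therefore leaves a gap: you must additionally show that only one $g_\lambda$ is nonzero, equivalently that the weights of the nonzero vectors $g(f_{\alpha,0}^{-i_0}\cdots f_{\alpha,n}^{-i_n})$ are coherent, i.e.\ of the form $\nu-(i_0+\cdots+i_n)\alpha$ for a single fixed $\nu\in\h^*$, at which point Lemma~\ref{weightfunctionslemma} applies directly.

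The missing observation is that $g$ is a $U$-module homomorphism, so $e_{\alpha,k}\cdot g(f_{\alpha,0}^{-i_0}\cdots f_{\alpha,n}^{-i_n}) = g(f_{\alpha,0}^{-i_0}\cdots f_{\alpha,k}^{-i_k+1}\cdots f_{\alpha,n}^{-i_n})$. This gives two consequences: adjacent nonzero values have weights differing by $\alpha$, and the support $\{(i_0,\dots,i_n) : g(f_{\alpha,0}^{-i_0}\cdots f_{\alpha,n}^{-i_n})\neq 0\}$ is closed under increasing any coordinate. Any two nonempty subsets of $\Z_{>0}^{n+1}$ of the latter type meet (they both contain a common upper bound), so the weight relation propagates consistently over the entire support, giving the required coherence. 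In the language of your construction: the supports of the $g_\lambda$ are pairwise disjoint because each $g(f_{\alpha,0}^{-i_0}\cdots f_{\alpha,n}^{-i_n})$ lies in a single weight space, yet each support is an up-set, so at most one $g_\lambda$ is nonzero and $g=g_\lambda$. Your well-definedness check for each $g_\lambda$ (projections commuting with the $e_{\alpha,k}$-action on $\phi_\alpha^{-1}(M)$, together with Lemma~\ref{homextensionlemma}) is essentially sound, but without the disjoint-support argument the conclusion of the corollary does not follow.
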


\begin{lemma}
$T_\alpha$ is right exact and $G_\alpha$ is left exact.
\end{lemma}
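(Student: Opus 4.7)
The plan is to decompose each functor into a composition of simpler functors whose exactness behaviour is standard, and to assemble the conclusions. Write $\Psi_\alpha$ for the exact autoequivalence of $U\text{-mod}$ induced by twisting the action by $\phi_\alpha$, and $\Psi_\alpha^{-1}$ for the analogous functor coming from $\phi_\alpha^{-1}$; both are exact since they are merely relabellings of the $U$-action.

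For $T_\alpha$, I would write $T_\alpha = \Psi_\alpha \circ (S_\alpha \otimes_U -)$. The functor $S_\alpha \otimes_U -$ is right exact on $U$-mod as it is a left adjoint of $\Hom_U(S_\alpha, -)$ (we use here that $S_\alpha$ is a $U$-$U$-bimodule thanks to Lemma \ref{subbimodulelemma}). Composing with the exact functor $\Psi_\alpha$ preserves right exactness, and the weight computation \eqref{tensorweight} shows that the output actually lies in $\mathcal{C}$. So $T_\alpha : \mathcal{C} \to \mathcal{C}$ is right exact.

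For $G_\alpha$, I would write $G_\alpha = \mathcal{H} \circ \Hom_U(S_\alpha, -) \circ \Psi_\alpha^{-1}$. The middle factor is left exact as the covariant Hom functor, and $\Psi_\alpha^{-1}$ is exact. It therefore remains to check that the weight-space functor $\mathcal{H}$ is left exact as a functor $U\text{-mod} \to \mathcal{C}$. Given any left exact sequence $0 \to A \xrightarrow{\iota} B \xrightarrow{\pi} C$ of $U$-modules, the restriction of $\iota$ to $\mathcal{H}(A)$ is injective, and if $v \in \mathcal{H}(B)$ satisfies $\pi(v) = 0$ then $v \in \iota(A)$ and is still a sum of $\h$-weight vectors when viewed inside $A$ (the $\h$-action is unchanged by $\iota$). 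Hence $\mathcal{H}$ preserves left exactness, and the composition $G_\alpha$ is left exact.

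I do not anticipate any substantial obstacle: the proof is a formal assembly of the fact that tensor is right exact, Hom is left exact, algebra automorphisms give exact twisting functors, and $\mathcal{H}$ is left exact. The only mildly delicate point is verifying that the images of $T_\alpha$ and $G_\alpha$ genuinely land in $\mathcal{C}$, which is immediate from \eqref{tensorweight} and Corollary \ref{weightfunctionscor} respectively.
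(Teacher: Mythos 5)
Your proposal is correct and follows essentially the same route as the paper: both express $T_\alpha$ as a tensor functor (right exact) post-composed with an exact twist, and $G_\alpha$ as a composition of the left exact functors $\Hom_U(S_\alpha, \phi_\alpha^{-1}(\bullet))$ and $\mathcal{H}$. The only cosmetic difference is that you keep the twist as a separate exact functor $\Psi_\alpha$, whereas the paper absorbs it into the bimodule $\phi_\alpha(S_\alpha)$; your explicit verification that $\mathcal{H}$ is left exact is a small, welcome addition.
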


\begin{proof}
For any module $M \in U\lmod$
$$\phi_\alpha(S_\alpha \otimes_U M) \cong \phi_\alpha(S_\alpha) \otimes_U M$$ and $(\bullet) \otimes_U M$ is right exact, hence $T_\alpha$ is right exact. 

Similarly $G_\alpha$ is a composition of two left exact functors, $\Hom_U(S_\alpha, \phi^{-1}_\alpha(\bullet))$ and $\mathcal{H}$, hence it is also left exact.
\end{proof}

\subsection{Twisting functors between blocks of category $\O$}

Note that the Weyl group action on $\h^*$ extends naturally to an action on $(\h_n^{\ge1})^*$ acting diagonally through the identification $(\h_n^{\ge 1})^* = (\h^*)^{\oplus n}$, and this vector space parameterises the Jordan blocks of $\O(\g_n)$. Retaining the notation of the previous section, we can now precisely state Theorem~\ref{T:introtwisting}.
\begin{theorem}
\label{maintwistingtheorem}
Let $\mu \in (\h_n^{\ge 1})^*$ be such that $\mu(h_{\alpha,n}) \ne 0$. The functors $T_\alpha$ and $G_\alpha$ from \eqref{e:introduceTandG} restrict to functors
\begin{eqnarray*}
T_\alpha &:& \calO^{(\mu)} (\g_n) \longrightarrow \calO^{(s_\alpha(\mu))}(\g_n)\\
G_\alpha &:& \calO^{(s_\alpha(\mu))}(\g_n) \longrightarrow \calO^{(\mu)}(\g_n).
\end{eqnarray*}
These form a quasi-inverse pair of equivalences.
\end{theorem}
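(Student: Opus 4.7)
The plan is to adapt the proof of \cite[Theorem~5.7]{Ch} from the Takiff case ($n=1$) to arbitrary $n$, following the same three-step outline but with substantially more intricate calculations arising from the higher loop modes in the commutator relations \eqref{relation1}--\eqref{relation3}. First I would verify that $T_\alpha$ and $G_\alpha$ send the indicated Jordan blocks into each other. The $\h_n^0$-semisimplicity and weight pattern of $T_\alpha M$ were already recorded in \eqref{tensorweight}. To identify the generalised eigenvalue of $h_i$ on a vector $f_{\alpha,0}^{-j_0}\cdots f_{\alpha,n}^{-j_n} \otimes w$, with $w$ in the $\mu$-generalised eigenspace of $M$, I would commute $h_i$ past the inverse factors using \eqref{relation2}: the leading contribution is $\mu(s_\alpha(h)_i) = s_\alpha(\mu)(h_i)$ while the correction terms involving $f_{\alpha, i+k}$ are locally nilpotent. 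Local nilpotence of $\n_n^+$ on $T_\alpha M$ follows from \eqref{relation3}. The hypothesis $\mu(h_{\alpha,n}) \ne 0$ first enters when verifying axiom $(\O1)$: since $h_{\alpha,n}$ has nonzero generalised eigenvalue on $M$ and $[h_{\alpha,n}, f_{\alpha,0}] = 2 f_{\alpha,n}$, the negative powers of $f_{\alpha,n}$ required in a generating set of $T_\alpha M$ are bounded. The corresponding check for $G_\alpha$ is dual, via Lemma~\ref{weightfunctionslemma} and Corollary~\ref{weightfunctionscor}.

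Second, I would show that both $T_\alpha$ and $G_\alpha$, restricted to these blocks, are exact (rather than merely right or left exact). For $T_\alpha$ this amounts to the vanishing of $\mathrm{Tor}_1^U(S_\alpha, M)$ for $M \in \O^{(\mu)}(\g_n)$. Since $U_\alpha$ is flat over $U$ (as an Ore localisation), the long exact sequence of Tor coming from $0 \to V_\alpha \to U_\alpha \to S_\alpha \to 0$ identifies $\mathrm{Tor}_1^U(S_\alpha, M)$ with the kernel of the natural map $V_\alpha \otimes_U M \to U_\alpha \otimes_U M$, and using the explicit bases \eqref{e:onebasis}, \eqref{e:anotherbasis} together with the invertibility of $h_{\alpha,n}$ (again the point where $\mu(h_{\alpha,n}) \ne 0$ is indispensable) one verifies that this kernel vanishes. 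Exactness of $G_\alpha$ follows by a dual argument.

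Third, tensor-Hom adjunction intersected with $\mathcal{H}$ yields a unit $\eta : \Id \to G_\alpha T_\alpha$ and counit $\epsilon : T_\alpha G_\alpha \to \Id$, and the task is to show both are natural isomorphisms. Thanks to Step~2 and the highest weight filtration of Lemma~\ref{finitefiltration}, a five-lemma argument reduces this to the case of Verma modules. The main obstacle, which I expect to be the hardest part and which corresponds to the forthcoming Lemmas~\ref{homconstruction} and \ref{maintwistingpartc}, is the explicit construction of a distinguished element of $\Hom_U(S_\alpha, \phi_\alpha^{-1}(M_\lambda))$ producing a highest weight vector of the expected $s_\alpha$-twisted weight, together with the verification that it generates the target. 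In the Takiff case the element can be written down using inverses of $f_{\alpha,0}$ and $f_{\alpha,1}$ alone; here it must absorb error terms indexed by all mixed loop modes $f_{\alpha, i+2j}$ and $e_{\beta-k\alpha, i+kj}$ appearing in \eqref{relation1}--\eqref{relation3}, making the bookkeeping significantly more delicate. Once this element has been constructed, a character comparison via \eqref{tensorweight} and Lemma~\ref{weightfunctionslemma} identifies $\eta_{M_\lambda}$ and $\epsilon_{M_{s_\alpha(\lambda)}}$ as bijective, completing the proof.
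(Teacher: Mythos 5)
Your plan diverges from the paper's proof in one main respect: you aim to prove exactness of $T_\alpha$ and $G_\alpha$ on these blocks as an intermediate step, so that a five-lemma argument reduces the unit and counit isomorphisms to highest weight modules. The paper never isolates exactness as a lemma; instead, Lemmas~\ref{maintwistingpartb} and \ref{maintwistingpartc} prove the adjunction maps $\psi_M$ and $\epsilon_N$ are isomorphisms for \emph{all} objects directly, by means of the explicit construction in Lemma~\ref{homconstruction}, and exactness emerges only as a corollary of the equivalence. Your route is in principle viable and somewhat more conceptual, but the two supporting claims you offer are not yet proofs, and in both cases the missing step is the substantial part.

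For exactness of $T_\alpha$: using $S_\alpha\cong A\otimes_{U(\mathfrak{a})} U$ and freeness of $U$ over $U(\mathfrak{a})$, the vanishing of $\operatorname{Tor}_1^U(S_\alpha, M)$ reduces to $\operatorname{Tor}_1^{U(\mathfrak{a})}(A, M)=0$, which would follow if $M$ were flat (in fact free) as a $U(\mathfrak{a})$-module. That freeness is a genuine theorem resting on Wilson's result that for $(\sl_2)_n$ with $\mu(h_{\alpha,n})\neq 0$ every highest weight module in the block is a Verma module; ``explicit bases plus invertibility of $h_{\alpha,n}$'' is not an argument for it. For axiom $(\O1)$ of $T_\alpha M$ when $M$ is highest weight with generator $v$: the vectors $f_{\alpha,0}^{-i_0}\cdots f_{\alpha,n}^{-i_n}\otimes v$ with all $i_k>0$ all occur in $T_\alpha M$, and the nontrivial claim is that they lie in the cyclic submodule generated by $f_{\alpha,0}^{-1}\cdots f_{\alpha,n}^{-1}\otimes v$; your commutator bound does not establish this. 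The paper's proof of Lemma~\ref{maintwistingparta} achieves it by identifying the $(\sl_2)_n$-span of these vectors with a Verma module $M_\gamma$ for $(\sl_2)_n$ with $\gamma(h_{\alpha,n})=(s_\alpha\lambda)(h_{\alpha,n})\neq 0$ and then invoking Theorem~\ref{maintheorem} for $(\sl_2)_n$ to conclude that $M_\gamma$ is simple. Two smaller slips: your five-lemma reduction gives highest weight modules, not necessarily Vermas; and a character match does not by itself yield bijectivity of $\psi_M$ or $\epsilon_N$ --- the injectivity has to come from somewhere, and the paper extracts it from Lemma~\ref{tensorzerolemma}.
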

The proof of Theorem~\ref{maintwistingtheorem} will be broken down into a series of lemmas, which we record and prove over the course of this section. To be more precise, the theorem will follow directly from Lemmas~\ref{maintwistingparta}, \ref{maintwistingpartb}, \ref{maintwistingpartc}, \ref{maintwistingpartd} and \ref{L:twistingnatural}.

For the rest of the section we keep $\alpha$ fixed and let $\mu \in (\h_n^{\ge 1})^*$ be such that $\mu(h_{\alpha,n}) \neq 0$.

The next result is the first step in the proof of Theorem~\ref{maintwistingtheorem}, and is a generalisation of \cite[Lemma~5.3]{Ch}. The proof given here is an alternative, shorter argument.
\begin{lemma}
\label{maintwistingparta}
$T_\alpha$ restricts to a functor $\calO^{(\mu)}(\g_n) \rightarrow \calO^{(s_\alpha(\mu))}(\g_n)$.
\end{lemma}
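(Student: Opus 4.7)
The plan is to verify, for $M \in \calO^{(\mu)}(\g_n)$, that $N := T_\alpha M$ satisfies axioms $(\calO 1)$--$(\calO 3)$ of Definition~\ref{catOdef} together with the generalised eigenvalue condition $N = N^{(s_\alpha \mu)}$. As a preliminary reduction, I observe that $\calO^{(s_\alpha \mu)}(\g_n)$ is a Serre subcategory of $\mathcal{C}$, combining Lemma~\ref{extensionlemma} with the Jordan decomposition \eqref{calOdecomposition}. Since $T_\alpha$ is right exact, applying it to a highest weight filtration (Lemma~\ref{finitefiltration}) reduces everything to the case when $M$ is a highest weight module generated by some $v$ of weight $\lambda$ with $\lambda_{\ge 1} = \mu$.

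Axiom $(\calO 2)$ and the description of twisted weights are already recorded in \eqref{tensorweight}. The $\h_n^{\ge 1}$ portion of $(\calO 3)$ and the eigenvalue condition follow from a direct calculation on basis elements $f_{\alpha, 0}^{-i_0}\cdots f_{\alpha, n}^{-i_n}\otimes w$: by \eqref{relation2}, commuting $\phi_\alpha(h_j) = s_\alpha(h)_j$ past any factor $f_{\alpha, k}^{-1}$ produces a correction whose total $t$-degree strictly increases, and since $\g_n$ is graded of top degree $n$ this iteration terminates; combined with $w \in M^{(\mu)}$ this yields nilpotency of $h_j - (s_\alpha \mu)(h_j)$. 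For the $\n_n^+$ portion of $(\calO 3)$, expanding any element of $M$ via a PBW basis with the symbols $\{f_{\alpha, i}\}$ placed leftmost and applying \eqref{e:onebasis} shows that the twisted weights of $N$ are bounded above by $s_\alpha(\lambda_0) - (n+1)\alpha$; combined with finite dimensionality of weight spaces inherited from $M$, this implies local nilpotence of $\n_n^+$.

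The main obstacle is $(\calO 1)$, finite generation, and this is where the hypothesis $\mu(h_{\alpha, n}) \ne 0$ enters essentially. I would show that $N$ is cyclic, generated by $\xi := f_{\alpha, 0}^{-1}\cdots f_{\alpha, n}^{-1}\otimes v$ under the twisted action. Writing a general basis element as $f_{\alpha, 0}^{-j_0}\cdots f_{\alpha, n}^{-j_n}\otimes (uv)$ with $u$ a PBW monomial in root vectors outside $\{f_{\alpha, i}\}$, one accesses the $u$-factor by applying the twisted action of $\phi_\alpha^{-1}(u)$, thereby reducing the task to producing all pure vectors $f_{\alpha, 0}^{-j_0}\cdots f_{\alpha, n}^{-j_n}\otimes v$ with $j_i \ge 1$. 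For these, the twisted operators $f_{\alpha, n-k}\cdot_\alpha$ act as left multiplication by $e_{\alpha, n-k}$ in $S_\alpha$, and using \eqref{relation1} iteratively together with $e_{\alpha, n-k}v = 0$ and $h_{\alpha, n}v = \mu(h_{\alpha, n})v$ one shows that these operators, applied appropriately, raise some $j_k$ by one with leading scalar a nonzero multiple of $\mu(h_{\alpha, n})$, modulo corrections in a strictly finer filtration. A triangular inversion then extracts each pure vector, completing an induction on $|j| = j_0 + \cdots + j_n$. The non-vanishing hypothesis $\mu(h_{\alpha, n}) \ne 0$ is indispensable at precisely this step, ensuring that the extracted leading coefficients are invertible.
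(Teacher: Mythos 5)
Your proof is essentially correct, but for the crucial cyclicity step it takes a different route from the published argument, and it is worth spelling out the contrast. You reduce to the highest weight case in the same way (right exactness of $T_\alpha$ plus the Serre subcategory observation of Lemma~\ref{extensionlemma}), and your weight-bound argument via \eqref{e:onebasis} correctly gives that the twisted weights of $T_\alpha M$ are $\leq s_\alpha(\lambda_0)-(n+1)\alpha$. To show that $\xi = f_{\alpha,0}^{-1}\cdots f_{\alpha,n}^{-1}\otimes v$ generates $T_\alpha M$, you propose a direct computation: apply the twisted $f_{\alpha,n-k}\cdot_\alpha = e_{\alpha,n-k}\cdot$ operators, commute via \eqref{relation1}, and use that the leading coefficient in front of $f_{\alpha,0}^{-i_0}\cdots f_{\alpha,k}^{-i_k-1}\cdots\otimes v$ is a nonzero multiple of $\mu(h_{\alpha,n})$, then invert triangularly. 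This is precisely the computational strategy of the Takiff case in \cite[Lemma~5.3]{Ch}, and it works; be warned, however, that the corrections appearing in \eqref{relation1} live at the \emph{same} total degree $|j|$, so the induction cannot simply be on $|j|$ — one needs a nested induction (in \cite{Ch}: descending on the index $k$, ascending on $i_k$, with a careful accounting of which indices $j_1\leq j_2\leq j_3$ can occur in the corrections). The paper instead takes a shorter, more structural route: the span $L$ of the pure tensors $\{f_{\alpha,0}^{-i_0}\cdots f_{\alpha,n}^{-i_n}\otimes v : i_k\geq 1\}$ is recognised as a highest weight $(\sl_2)_n$-module isomorphic to a Verma module $M_\gamma$, and its simplicity is deduced by applying Theorem~\ref{maintheorem} to $(\sl_2)_n$ with the Cartan as Levi factor (where all highest weight modules are trivially simple). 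This uses $\mu(h_{\alpha,n})\neq 0$ in exactly the same place but replaces the delicate combinatorial induction with an invocation of the parabolic-induction equivalence already established. Finally, note that once $\xi$ is shown to be a highest weight vector generating $T_\alpha M$, the module is a quotient of $M_{s_\alpha\bullet_n\lambda}$ and all of $(\calO 1)$--$(\calO 3)$ together with the Jordan-block condition follow at once, so your separate verifications of $(\calO 2)$, $(\calO 3)$ and the eigenvalue condition, while correct, are not strictly needed.
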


\begin{proof}
For $M \in \calO^{(\mu)}(\g_n)$ we let $l(M)$ denote the minimal length of a filtration $0 = M_0 \subseteq M_1 \subseteq \dots \subseteq M_{k-1} \subseteq M_k = M$ such that the sections are highest weight modules (Cf. Lemma \ref{finitefiltration}). We have an exact sequence:
\[0 \rightarrow M_1 \rightarrow M \rightarrow M/M_1 \rightarrow 0\]
and since $T_\alpha$ is right exact, we have an exact sequence
\[T_\alpha M_1 \rightarrow T_\alpha M \rightarrow T_\alpha(M/M_1) \rightarrow 0\]
Using Lemma~\ref{extensionlemma} and the fact that $T_\alpha M_1$ is a highest weight module, we can reduce the claim that $T_\alpha(M) \in \O^{(\mu)}(\g_n)$ to the case where $l(M) = 1$, i.e. $M$ is a highest weight module.


For the rest of the proof we fix $M$ highest weight of weight $\lambda \in \h_n^*$, and let $v \in M$ be a highest weight generator of $M$. We will show that $f_{\alpha, 0}^{-1} \dots f_{\alpha, n}^{-1} \otimes v$ is highest weight and generates $T_\alpha M$, which will complete the proof of the lemma.

To see that the vector is maximal, use \eqref{tensorweight} to see that  $e_{\alpha, i} f_{\alpha, 0}^{-1} \dots f_{\alpha, n}^{-1} \otimes v \in T_\alpha M$ lies in an $\h$-eigenspace which is not a weight of $T_\alpha M$ (for any $i \ge 0$ and $\alpha\in \Phi^+$). To see that it is a genuine highest weight vector, one can use \eqref{relation2} to show that $h_i$ acts via $s_{\alpha_i}(h_i)$.

 To see that $f_{\alpha, 0}^{-1} \dots f_{\alpha, n}^{-1} \otimes v$ generates $T_\alpha M$ we use the fact that $Uv = M$ and that $S_\alpha$ has two bases, \eqref{e:onebasis} and \eqref{e:anotherbasis}, to check that every element of $T_\alpha M$ lies in the submodule generated by the set $\{f_{\alpha, 0}^{-i_0} \dots f_{\alpha, n}^{-i_n} \otimes v \mid i_k > 0\}$. Let $L$ denote the span of this set. Note that it is an $(\sl_2)_n$-module, where $(\sl_2)_n$ is the truncated current algebra on $\sl_2 = \langle e_\alpha, h_\alpha, f_\alpha\rangle$. To complete the current proof we show that $L$ is a simple $(\sl_2)_n$-module.
 
Let $\mathfrak{t}_n \subseteq (\sl_2)_n$ denote the span on $h_{\alpha,0},...,h_{\alpha, n}$ and let $\gamma = \lambda|_{\mathfrak{t}_n} \in \mathfrak{t}_n^*$ give the action on $f_{\alpha, 0}^{-1} \dots f_{\alpha, n}^{-1} \otimes v$. If $M_\gamma$ denotes the Verma module of highest weight $\gamma$ then there is a nonzero homomorphism $M_\gamma \to L$ and the dimensions of the weight spaces are the same. Therefore it remains to show that the Verma module $M_\gamma$ is simple. By \eqref{relation2} we see that $\gamma(h_{\alpha, n}) = (s_\alpha \lambda)(h_{\alpha, n}) \ne 0$ and so we can apply Theorem~\ref{maintheorem} to see that $\O^{(\gamma_{\ge 1})}((\sl_2)_n)$ is equivalent to $\O^{(\gamma_{\ge 1})}(\mathfrak{t}_n)$. In the latter category, all highest weight modules are simple, and it follows that $M_\gamma$ is simple, as required. This completes the proof.
\end{proof}

Consider the abelian Lie algebra $\mathfrak{a} = \langle f_{\alpha, 0}, \dots, f_{\alpha, n} \rangle$ contained in $\g_n$, and let $A := \vspan \{f_{\alpha, 0}^{-i_0} \dots f_\alpha^{-i_n}: i_0, \dots, i_n > 0 \} \subseteq S_\alpha$, which is a $U(\mathfrak{a})$-$U(\mathfrak{a})$-subbimodule of $S_\alpha$. Thanks to the description of the two bases \eqref{e:onebasis} and \eqref{e:anotherbasis} of $S_\alpha$, we have an isomorphism of $U$-$U(\mathfrak{a})$-bimodules
\[S_\alpha \cong U \otimes_{U(\mathfrak{a})} A,\]
and and an isomorphism of $U(\mathfrak{a})$-$U$-bimodules
\[S_\alpha \cong A \otimes_{U(\mathfrak{a})} U.\]
In particular, for any left $U$-module $M$, we have an isomorphism of left $U(\mathfrak{a})$-modules
\begin{align*}
S_\alpha \otimes_U M \cong (A \otimes_{U(\mathfrak{a})} U) \otimes_U M \cong A \otimes_{U(\mathfrak{a})} M.
\end{align*}

Recall that $\mu \in \h_n^*$ satisfies $\mu(h_{\alpha,n}) \neq 0$.
\begin{lemma} \cite[Lemma~5.13]{Ch}
\label{tensorzerolemma}
Let $M \in \calO^{(\mu)}$. Let $m \in M^\lambda \backslash \{0\}$ for some $\lambda \in \h^*$. Then the following are equivalent:
\begin{enumerate}
\item[(a)] In $T_\alpha M$, we have $f_{\alpha, 0}^{-i_0} \dots f_{\alpha, n}^{-i_n} \otimes m = 0$.

\item[(b)] For any vector space $V$ and $U(\mathfrak{a})$-balanced map $\varphi: A \times M \rightarrow V$ we have $\varphi(f_{\alpha, 0}^{-i_0} \dots f_{\alpha, n}^{-i_n}, m) = 0$.

\item[(c)] There exist $m_0, \dots m_n \in M$ such that $m = f_{\alpha, 0}^{i_0} \cdot m_0 + \dots + f_{\alpha, n}^{i_n} \cdot m_n$.
\end{enumerate}
\end{lemma}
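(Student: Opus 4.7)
The plan is to prove the equivalences in the cyclic order (a) $\Leftrightarrow$ (b), (c) $\Rightarrow$ (a), and (b) $\Rightarrow$ (c). The equivalence (a) $\Leftrightarrow$ (b) is immediate from the universal property of the tensor product once one uses the $U(\mathfrak{a})$-module isomorphism $S_\alpha \otimes_U M \cong A \otimes_{U(\mathfrak{a})} M$ noted just before the lemma, together with the general fact (which can be extracted from the standard construction of $A \otimes_{U(\mathfrak{a})} M$ as a quotient of the free abelian group on $A \times M$) that a simple tensor $a \otimes m$ is zero if and only if every $U(\mathfrak{a})$-balanced map from $A \times M$ annihilates $(a,m)$.

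For (c) $\Rightarrow$ (a), I would compute directly: if $m = \sum_k f_{\alpha,k}^{i_k} \cdot m_k$, then by $U$-bilinearity the tensor $f_{\alpha,0}^{-i_0}\cdots f_{\alpha,n}^{-i_n} \otimes m$ is a sum of terms of the form $f_{\alpha,0}^{-i_0}\cdots f_{\alpha,k-1}^{-i_{k-1}} f_{\alpha,k}^{0} f_{\alpha,k+1}^{-i_{k+1}}\cdots f_{\alpha,n}^{-i_n} \otimes m_k$; each such element has a nonnegative power of $f_{\alpha,k}$ and hence lies in the image of $V_\alpha$, so vanishes in $S_\alpha \otimes_U M$.

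The hard direction is (b) $\Rightarrow$ (c), where the strategy is to suppose that $m$ cannot be written in the form asserted in (c) and to explicitly produce a $U(\mathfrak{a})$-balanced map detecting this. I would reduce in three stages, mirroring the argument of \cite[Lemma~5.13]{Ch}. First, without loss of generality assume $M$ is indecomposable. Second, pass to the subalgebra $(\mathfrak{sl}_2)_\alpha := \langle e_\alpha, h_\alpha, f_\alpha \rangle \subseteq \g$: let $N_1 \subseteq M$ be the $((\mathfrak{sl}_2)_\alpha)_n$-submodule generated by $m$; since $\mu(h_{\alpha,n}) \neq 0$ this lies in a Jordan block of $\O(((\mathfrak{sl}_2)_\alpha)_n)$ satisfying the hypothesis of the lemma, and an analogue of the splitting result Corollary~\ref{splittingcor} of the Takiff paper provides a $U(\mathfrak{a})$-complement, so $M = N_1 \oplus N_2$ as $U(\mathfrak{a})$-modules. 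A balanced map on $A \times N_1$ extends to one on $A \times M$ by zero on the complement, reducing the problem to $\g = \mathfrak{sl}_2$. Third, inside $\mathfrak{sl}_2$ apply Lemma~\ref{finitefiltration} to obtain a filtration of $N_1$ whose sections are highest weight, and invoke Theorem~\ref{maintheorem} together with $\mu(h_{\alpha,n}) \neq 0$ to conclude that these sections are Verma modules which are in fact simple.

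The remaining base case is that of a Verma module $M_\gamma$ for $(\mathfrak{sl}_2)_n$ with $\gamma_n \neq 0$. Here $M_\gamma$ has a PBW basis $\{f_{\alpha,0}^{a_0} \cdots f_{\alpha,n}^{a_n} v_\gamma \mid a_j \geq 0\}$, and the condition (c) for a vector $m = \sum k_{a_0,\dots,a_n} f_{\alpha,0}^{a_0} \cdots f_{\alpha,n}^{a_n} v_\gamma$ is precisely that $k_{a_0,\dots,a_n} = 0$ whenever $a_j < i_j$ for every $j$. If this fails, pick any $(b_0,\dots,b_n)$ with $0 \leq b_j < i_j$ and $k_{b_0,\dots,b_n} \neq 0$, and define a balanced map $\varphi : A \times M_\gamma \to \C$ on basis pairs by
\begin{equation*}
\varphi(f_{\alpha,0}^{-i_0}\cdots f_{\alpha,n}^{-i_n},\, f_{\alpha,0}^{a_0}\cdots f_{\alpha,n}^{a_n} v_\gamma) \;=\; \begin{cases} 1 & \text{if } a_k = i_k - (i_k - b_k) \text{ for all } k,\\ 0 & \text{otherwise,} \end{cases}
\end{equation*}
extended linearly, and check $U(\mathfrak{a})$-balancedness from the fact that both sides shift by the same $f_{\alpha,k}^{\pm 1}$ pattern. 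By construction $\varphi$ witnesses $\varphi(f_{\alpha,0}^{-i_0}\cdots f_{\alpha,n}^{-i_n}, m) \neq 0$, contradicting (b) and completing the reduction. The main technical obstacle I anticipate is the induction step in the $\mathfrak{sl}_2$ case: if the image of $m$ in the quotient $N_1/M_1$ already satisfies (c) we must correct by an element $v \in M_1$ and then lift a balanced map produced on $M_1$ back to all of $N_1$, which is precisely where the splitting Corollary~\ref{splittingcor} is indispensable.
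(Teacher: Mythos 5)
Your proof is correct and follows essentially the same route as the cited argument from \cite[Lemma 5.13]{Ch}: the equivalence of (a) and (b) via the bimodule isomorphism $S_\alpha \otimes_U M \cong A \otimes_{U(\mathfrak{a})} M$ and the balanced-map characterization of the tensor product, (c)~$\Rightarrow$~(a) by direct computation, and the reduction of (b)~$\Rightarrow$~(c) to the $(\mathfrak{sl}_2)_n$-Verma base case via the $U(\mathfrak{a})$-module splitting lemma and a highest-weight filtration. The only minor variations are that you invoke Theorem~\ref{maintheorem} rather than \cite[Theorem~7.1]{W} to conclude simplicity of these Verma modules, and that you pass to the $(\mathfrak{sl}_2)_\alpha$-submodule before filtering rather than after, neither of which changes the structure of the argument.
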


The following fact only depends on the existence of inverses for $f_{\alpha, i}$ in $S_\alpha$. We omit the proof.
\begin{lemma} \cite[Lemma~5.15]{Ch}
\label{elementformlemma}
Let $M \in U$-mod. Then any element of $S_\alpha \otimes M$ can be written in the form $f_{\alpha, 0}^{-i_0} \dots f_{\alpha, n}^{-i_n} \otimes m$, for some $i_0, \dots, i_n > 0$ and $m \in M$.
\end{lemma}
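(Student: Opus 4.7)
\textbf{Proof plan for Lemma~\ref{elementformlemma}.} Any $x \in S_\alpha \otimes_U M$ is a finite sum $\sum_j s_j \otimes m_j$ with $s_j \in S_\alpha$ and $m_j \in M$. Using the basis of $S_\alpha$ described in \eqref{e:onebasis}, I would first expand each $s_j$ as a $\C$-linear combination of elements of the form $f_{\alpha, 0}^{-a_{j,0}} \cdots f_{\alpha, n}^{-a_{j,n}} v_j$, where all exponents $a_{j,k}$ are strictly positive and $v_j$ is a PBW monomial in $U$ containing no factor from $F_\alpha$. Since the $v_j$ lie in $U$ and the tensor is taken over $U$, we may absorb them into the second factor:
\[(f_{\alpha, 0}^{-a_{j,0}} \cdots f_{\alpha, n}^{-a_{j,n}} v_j) \otimes m_j \;=\; (f_{\alpha, 0}^{-a_{j,0}} \cdots f_{\alpha, n}^{-a_{j,n}}) \otimes v_j \cdot m_j.\]
After this absorption, $x$ is a finite sum of pure tensors of the form $f_{\alpha, 0}^{-a_{j,0}} \cdots f_{\alpha, n}^{-a_{j,n}} \otimes m_j'$ with all $a_{j,k} > 0$.

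To merge these into a single tensor I would use a common denominator argument. Setting $i_k := \max_j a_{j,k}$ for $k = 0,\ldots,n$ (which is strictly positive since each $a_{j,k}>0$), and using that the elements $f_{\alpha, 0}, \ldots, f_{\alpha, n}$ pairwise commute, we have
\[f_{\alpha, 0}^{-a_{j,0}} \cdots f_{\alpha, n}^{-a_{j,n}} \;=\; f_{\alpha, 0}^{-i_0} \cdots f_{\alpha, n}^{-i_n} \cdot f_{\alpha, 0}^{i_0 - a_{j,0}} \cdots f_{\alpha, n}^{i_n - a_{j,n}}\]
in $S_\alpha$, where the second factor belongs to $U$ since all its exponents are non-negative. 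Moving this second factor across the tensor yields
\[x \;=\; f_{\alpha, 0}^{-i_0} \cdots f_{\alpha, n}^{-i_n} \otimes m, \qquad m := \sum_j f_{\alpha, 0}^{i_0 - a_{j,0}} \cdots f_{\alpha, n}^{i_n - a_{j,n}} v_j \cdot m_j' \;\in\; M,\]
which is of the required form. There is no real obstacle: the only delicate point is to work consistently in $S_\alpha$ rather than $U_\alpha$, but the cancellation $f_{\alpha, k}^{-i} f_{\alpha, k}^{i} = 1$ passes to $S_\alpha$ because $1 \notin V_\alpha$. The key input, as remarked in the text, is precisely the existence of the inverses $f_{\alpha, k}^{-1}$ together with the mutual commutativity of the $f_{\alpha, k}$.
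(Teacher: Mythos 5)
Your argument is correct and is the natural common-denominator proof; the paper simply defers to \cite[Lemma~5.15]{Ch} with the remark that the claim "only depends on the existence of inverses for $f_{\alpha,i}$," and your steps are exactly what that reference does, adapted to general $n$. Expanding in the basis~\eqref{e:onebasis}, pushing the $v_j$ across the tensor using the right $U$-module structure of $S_\alpha$, and then taking $i_k = \max_j a_{j,k}$ so that the difference $f_{\alpha,0}^{i_0-a_{j,0}}\cdots f_{\alpha,n}^{i_n-a_{j,n}}$ lies in $U$ and can be moved to the second factor, is the standard argument.

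Two small corrections are worth noting, though neither affects the validity of the proof. First, your closing parenthetical is wrong: in fact $1 \in V_\alpha$, since $1 = f_{\alpha,0}^0\cdots f_{\alpha,n}^0\cdot v(0,0,0)$ has all $F_\alpha$-exponents equal to $0 \geq 0$, so $[1]=0$ in $S_\alpha$. This is irrelevant to the proof because you never need to evaluate $f_{\alpha,k}^{-i}f_{\alpha,k}^{i}$ in isolation as an element of $S_\alpha$; the identity $f_{\alpha,0}^{-a_0}\cdots f_{\alpha,n}^{-a_n} = \bigl(f_{\alpha,0}^{-i_0}\cdots f_{\alpha,n}^{-i_n}\bigr)\cdot\bigl(f_{\alpha,0}^{i_0-a_0}\cdots f_{\alpha,n}^{i_n-a_n}\bigr)$ holds in $U_\alpha$ and hence between cosets in $S_\alpha$, and both sides are nonzero basis elements of $S_\alpha$ precisely because all the $a_k$ and all the $i_k$ are strictly positive. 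The justification, therefore, is simply that the whole manipulation stays within $U_\alpha$ and is compatible with the right $U$-action on $S_\alpha = U_\alpha/V_\alpha$, not anything to do with $1\notin V_\alpha$. Second, your displayed formula for $m$ repeats $v_j$: since you already set $m_j' = v_j\cdot m_j$, the final expression should read $m = \sum_j f_{\alpha,0}^{i_0-a_{j,0}}\cdots f_{\alpha,n}^{i_n-a_{j,n}}\cdot m_j'$.
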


\begin{lemma} \cite[Lemma~5.7(b)]{Ch}
\label{maintwistingpartb}
 For any $M \in \calO^{(\mu)}(\g_n)$, the map $\psi_M : M \rightarrow G_\alpha T_\alpha M$ given by $\psi_M(m)(s) = s \otimes m$ is an isomorphism.
\end{lemma}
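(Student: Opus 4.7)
The plan is to follow the strategy of \cite[Lemma~5.7(b)]{Ch} for the Takiff case, adapted to general $n$. The map $\psi_M$ is the unit of the adjunction between $T_\alpha$ and $G_\alpha$ on the category of $\h$-semisimple $U$-modules, so it is automatically a natural transformation of $U$-module homomorphisms; what requires checking is that $\psi_M(m)$ actually lands in the $\h$-semisimple part $G_\alpha T_\alpha M$, which follows for $m \in M^\lambda$ because \eqref{tensorweight} gives the vector $\psi_M(m)(f_{\alpha, 0}^{-i_0}\cdots f_{\alpha, n}^{-i_n}) = f_{\alpha, 0}^{-i_0}\cdots f_{\alpha, n}^{-i_n}\otimes m$ the weight $s_\alpha(\lambda) - (i_0 + \cdots + i_n)\alpha$ in $T_\alpha M$, matching the criterion of Lemma~\ref{weightfunctionslemma} for $\psi_M(m)$ to be a weight-$\lambda$ vector of $G_\alpha T_\alpha M$.

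For injectivity, fix a nonzero $m \in M^\lambda$ and use \eqref{fdweightspaces} to choose $k$ with $M^{\lambda + k\alpha} = 0$. If $\psi_M(m) = 0$ then $f_{\alpha, 0}^{-k}\cdots f_{\alpha, n}^{-k}\otimes m = 0$ in $T_\alpha M$; Lemma~\ref{tensorzerolemma} then yields $m = \sum_j f_{\alpha, j}^k m_j$ for some $m_j \in M$, and comparing weights on both sides forces the relevant components of each $m_j$ to lie in $M^{\lambda + k\alpha} = 0$, giving $m = 0$, a contradiction.

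Surjectivity is the main difficulty. Given a weight vector $g \in (G_\alpha T_\alpha M)^\lambda$, one uses that for $k \geq \max_l i_l$ one has the identity $f_{\alpha, 0}^{-i_0}\cdots f_{\alpha, n}^{-i_n} = f_{\alpha, 0}^{k - i_0}\cdots f_{\alpha, n}^{k - i_n}\cdot f_{\alpha, 0}^{-k}\cdots f_{\alpha, n}^{-k}$ in $S_\alpha$, so that $g$ is completely determined via $U$-equivariance by its values $g(f_{\alpha, 0}^{-k}\cdots f_{\alpha, n}^{-k})$ for $k$ large. The key step is then to show that this evaluation takes the ``diagonal'' form $f_{\alpha, 0}^{-k}\cdots f_{\alpha, n}^{-k}\otimes m$ for a uniquely determined $m \in M^\lambda$, independent of $k$. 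Once this is established, $g = \psi_M(m)$ follows by $U$-equivariance on generators; the weight constraint $m \in M^\lambda$ is forced by \eqref{tensorweight} together with the weight of $g$, and the independence from $k$ follows from the injectivity already proved (the tensor factor is determined by the element it produces).

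The main obstacle is establishing this canonical form for $g(f_{\alpha, 0}^{-k}\cdots f_{\alpha, n}^{-k})$. A priori Lemma~\ref{elementformlemma} only allows an arbitrary linear combination of basis vectors in the appropriate weight space, but $U$-equivariance imposes strong constraints: for each $l$ one has $f_{\alpha, l}^k\cdot g(f_{\alpha, 0}^{-k}\cdots f_{\alpha, n}^{-k}) = 0$ because $f_{\alpha, l}^k\cdot f_{\alpha, 0}^{-k}\cdots f_{\alpha, n}^{-k}$ vanishes in $S_\alpha$, and further constraints come from equivariance under $e_{\alpha, i}$, $h_i$ and the $e_{\beta, i}$ for $\beta \in \Phi^+\setminus\{\alpha\}$, which are encoded in the relations of Lemma~\ref{relationslemma}. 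The hypothesis $\mu(h_{\alpha, n}) \ne 0$ enters via \eqref{relation2} to ensure these constraints are nondegenerate, mirroring its use in the proof of Lemma~\ref{maintwistingparta}. The new complication compared to \cite[Lemma~5.7(b)]{Ch} is that Lemma~\ref{relationslemma} produces quadratic and cubic correction terms in the $f_{\alpha, j}^{-1}$, so the analysis must track considerably more data than in the Takiff case.
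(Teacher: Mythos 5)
Your overall strategy matches the one behind \cite[Lemma~5.7(b)]{Ch} (and the paper simply cites that result without reproving it), but the surjectivity step -- which you yourself flag as "the main difficulty" -- is left unproved, and the tools you propose to close the gap are the wrong ones.

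First, a small but consequential misreading: Lemma~\ref{elementformlemma} does not produce an ``arbitrary linear combination of basis vectors''. It says every element of $S_\alpha \otimes_U M$ is a \emph{single pure tensor} $f_{\alpha, 0}^{-j_0}\cdots f_{\alpha, n}^{-j_n}\otimes m'$. The content of the key step is therefore to normalise the exponents $j_l$ to the ones you want, not to collapse a linear combination. The mechanism for this normalisation is: if $j_l < i_l$, absorb powers of $f_{\alpha,l}$ into $m'$; if $j_l > i_l$, use the $U(\mathfrak{a})$-equivariance of $g$ under the \emph{commutative} algebra $\mathfrak{a} = \langle f_{\alpha,0},\dots,f_{\alpha,n}\rangle$ to deduce $0 = f_{\alpha, l}^{i_l}\cdot g(f_{\alpha,0}^{-i_0}\cdots f_{\alpha,n}^{-i_n})$, and then apply the (a)$\Leftrightarrow$(c) equivalence of Lemma~\ref{tensorzerolemma} to write $m'$ in a form that lets you cancel the excess power. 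This is the only place the hypothesis $\mu(h_{\alpha,n}) \ne 0$ enters -- through Lemma~\ref{tensorzerolemma}, not through \eqref{relation2} as you claim.

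Second, your diagnosis of the difficulty is off. You say the ``new complication compared to [Ch]'' is that Lemma~\ref{relationslemma} produces quadratic and cubic correction terms. But Lemma~\ref{relationslemma} plays no role in this lemma's proof: those relations describe how $e$'s and $h$'s interact with $f_{\alpha,j}^{-1}$, whereas the normalisation argument only ever multiplies by $f_{\alpha,l}$'s, which commute with one another and produce no correction terms. Going from $n=1$ to general $n$ merely replaces two exponents by $n+1$; the mechanism is identical, which is precisely why the paper cites \cite[Lemma~5.7(b)]{Ch} verbatim rather than reworking it. The genuinely new complications in Section~\ref{S:twisting} are concentrated in Lemmas~\ref{homconstruction} and~\ref{maintwistingpartc}, not here.

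To repair your sketch: cite Lemma~\ref{elementformlemma} to write $g(f_{\alpha,0}^{-i_0}\cdots f_{\alpha,n}^{-i_n}) = f_{\alpha,0}^{-j_0}\cdots f_{\alpha,n}^{-j_n}\otimes m_{i_0,\dots,i_n}$, normalise $j_l = i_l$ using $U(\mathfrak{a})$-equivariance and Lemma~\ref{tensorzerolemma}, show via Lemma~\ref{weightfunctionslemma} that all $m_{i_0,\dots,i_n}$ have the same weight $\lambda$, choose $k$ with $M^{\lambda + k\alpha} = 0$ to deduce $m_{i_0,\dots,i_n}$ is independent of $(i_0,\dots,i_n)$ once all entries are $\ge k$, and then verify $g = \psi_M(m_{k,\dots,k})$.
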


\begin{lemma} \cite[Lemma~5.16]{Ch}
\label{homextensionlemma}
Let $M$ be a $U$-module, and let $A \subseteq S_\alpha$ and $\mathfrak{a} \subseteq \g$ be as in Lemma \ref{tensorzerolemma}. Let $\varphi: A \rightarrow M$ be a $U(\mathfrak{a})$-homomorphism. Then $\varphi$ extends uniquely to a $U$-homomorphism $\varphi: S_\alpha \rightarrow M$.
\end{lemma}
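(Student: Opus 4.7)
The plan is to deduce this as an instance of the tensor--hom adjunction. I would begin by recalling from the discussion preceding Lemma~\ref{tensorzerolemma} the $U$-$U(\mathfrak{a})$-bimodule isomorphism
\[
S_\alpha \;\cong\; U \otimes_{U(\mathfrak{a})} A,
\]
under which an element $a \in A$ corresponds to $1 \otimes a$. The standard tensor--hom adjunction then yields a canonical identification
\[
\Hom_U(S_\alpha, M) \;\cong\; \Hom_U(U \otimes_{U(\mathfrak{a})} A, M) \;\cong\; \Hom_{U(\mathfrak{a})}(A, M),
\]
in which the composite isomorphism sends a $U$-homomorphism $\tilde{\varphi}$ to its restriction to $A \subseteq S_\alpha$. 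This encodes both existence and uniqueness of the extension in a single statement, and would allow me to conclude the lemma directly.

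For the sake of concreteness I would also spell out the construction underlying the adjunction: given a $U(\mathfrak{a})$-linear map $\varphi \colon A \to M$, the assignment $(u,a) \mapsto u \cdot \varphi(a)$ defines a $\C$-bilinear map $U \times A \to M$ which is $U(\mathfrak{a})$-balanced, since for $b \in U(\mathfrak{a})$ we have $(ub)\cdot \varphi(a) = u \cdot (b \cdot \varphi(a)) = u \cdot \varphi(ba)$ using the $U(\mathfrak{a})$-linearity of $\varphi$. It therefore descends to a $\C$-linear map $U \otimes_{U(\mathfrak{a})} A \to M$, which is $U$-linear by construction and restricts to $\varphi$ on $A$. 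Uniqueness I would deduce from the observation that $A$ generates $S_\alpha$ as a left $U$-module (visible from the bimodule decomposition above), so any two $U$-homomorphisms out of $S_\alpha$ that agree on $A$ must coincide.

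I do not anticipate a substantive obstacle: the content of the lemma is essentially absorbed into the bimodule decomposition $S_\alpha \cong U \otimes_{U(\mathfrak{a})} A$ that has already been established. The only point worth verifying carefully is that the $U(\mathfrak{a})$-module structure on $A$ appearing in the hypothesis of the lemma agrees with the one coming from the left $U$-action on $S_\alpha$, but this is immediate from the way $A$ was defined as a sub-bimodule of $S_\alpha$.
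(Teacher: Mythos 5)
Your argument is correct and is essentially the paper's own: the (commented-out) proof in the source invokes Frobenius reciprocity $\Hom_U(U \otimes_{U(\mathfrak{a})} A, M) \cong \Hom_{U(\mathfrak{a})}(A, M)$ together with the already-established bimodule isomorphism $S_\alpha \cong U \otimes_{U(\mathfrak{a})} A$, which is precisely the tensor--hom adjunction you describe. Your added verification of the balancing condition and the remark on uniqueness via $A$ generating $S_\alpha$ as a left $U$-module are harmless elaborations of the same idea.
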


This next result is a generalisation of \cite[Lemma~5.17]{Ch}, but we provide the proof which is more complicated in the present setting.
\begin{lemma}
\label{homconstruction}
Let $M \in \calO^{(\mu)}(\g_n)$ and let $\mathcal{I}$ be a subset of $\Z^{n+1}$ satisfying:

\begin{enumerate}
\item[(1)] $(i_0, \dots, i_n) \in\mathcal{I}$ whenever any $i_k \leq 0$.

\item[(2)] If $(i_0, \dots, i_n) \in \mathcal{I}$, then $(i_0, \dots, i_k-1, \dots, i_n) \in \mathcal{I}$ for any $0 \leq k \leq n$.
\end{enumerate}
and let $\{m_{i_0, \dots, i_n} \in M : (i_0, \dots, i_n) \in \mathcal{I}\}$ be a collection of elements of $M$ satisfying:
\begin{enumerate}
\item[(i)] $m_{i_0, \dots, i_n} = 0$ whenever any $i_k \leq 0$.

\item[(ii)] $e_{\alpha, k} \cdot m_{i_0, \dots, i_n} = m_{i_0, \dots, i_k-1, \dots, i_n}$ whenever $(i_0, \dots, i_n) \in \mathcal{I}$
\end{enumerate}
Then there exists a $U(\mathfrak{a})$-homomorphism $\varphi: A \rightarrow \phi_\alpha^{-1}(M)$ such that $\varphi(f_{\alpha, 0}^{-i_0} \dots f_{\alpha, n}^{-i_n}) = m_{i_0, \dots, i_n}$, which by Lemma \ref{homextensionlemma} extends to a $U$-homomorphism $\varphi: S_\alpha \rightarrow \phi_\alpha^{-1}(M)$. Moreover, if there exists $\lambda \in \h^*$ such that the weight of $m_{i_0, \dots, i_n}$ is $\lambda - (i_0 + \dots + i_n)\alpha$ for all $(i_0, \dots, i_n) \in \mathcal{I}$, then we can choose $\varphi$ to also be weight and hence in $G_\alpha M$.
\end{lemma}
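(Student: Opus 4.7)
The plan is to translate the problem into one about families of elements of $M$. Specifying a $U(\mathfrak{a})$-homomorphism $\varphi\colon A \to \phi_\alpha^{-1}(M)$ amounts, by the explicit description of $A$ as a $U(\mathfrak{a})$-module with basis $\{e_I := f_{\alpha,0}^{-i_0}\cdots f_{\alpha,n}^{-i_n} : I=(i_0,\ldots,i_n),\ i_k>0\}$ and relations $f_{\alpha,k}\cdot e_I = e_{I-\delta_k}$ (zero if $i_k=1$), to specifying a family $\{v_I\}_{I\in\Z^{n+1}}$ of elements of $M$ with $v_I = 0$ unless all $i_k > 0$ and with $e_{\alpha,k}\cdot v_I = v_{I-\delta_k}$ for every $k$. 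Here $\delta_k$ denotes the $k$-th standard basis vector of $\Z^{n+1}$, and I have used $\phi_\alpha(e_{\alpha,k}) = f_{\alpha,k}$ to transport the left $\mathfrak{a}$-action on $A$ into the action of the $e_{\alpha,k}$ on $M$. The task therefore reduces to extending the given data $\{m_I\}_{I\in\mathcal{I}}$ to a compatible family indexed by all of $\Z^{n+1}$.

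\textbf{Step 1 (Inductive extension).} I would proceed by induction on the distance $d(I) := \min\{\sum_k (i_k - j_k) : J=(j_0,\ldots,j_n)\in\mathcal{I},\ J\leq I\text{ coordinatewise}\}$ from $\mathcal{I}$. Conditions (1), (2) on $\mathcal{I}$ guarantee that $d(I)$ is finite and that $I-\delta_k$ has strictly smaller distance than $I$. For $I\notin\mathcal{I}$ with all $\tilde m_{I-\delta_k}$ already defined, one must produce a single $\tilde m_I\in M$ satisfying $e_{\alpha,k}\tilde m_I = \tilde m_{I-\delta_k}$ for all $k=0,\ldots,n$ simultaneously. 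The consistency $e_{\alpha,j}\tilde m_{I-\delta_k}=e_{\alpha,k}\tilde m_{I-\delta_j}$, which is necessary for the existence of such $\tilde m_I$, holds automatically because the $e_{\alpha,k}$ commute in $\g_n$, combined with (ii) and the inductive hypothesis.

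\textbf{Step 2 (The main obstacle: simultaneous preimage).} The heart of the argument is solving this simultaneous preimage problem. Here the assumption $\mu(h_{\alpha,n})\neq 0$ is decisive: in the Jordan block $\O^{(\mu)}(\g_n)$ the element $h_{\alpha,n}\in \h_n^{\geq 1}$ acts with generalised eigenvalue $\mu(h_{\alpha,n})\neq 0$, hence invertibly, on $M$. The commutator $[e_{\alpha,k},f_{\alpha,n-k}]=h_{\alpha,n}$ then makes $h_{\alpha,n}^{-1}f_{\alpha,n-k}$ a right inverse to $e_{\alpha,k}$ modulo a correction term involving $e_{\alpha,k}$ applied to something of strictly smaller depth, which can be absorbed by the inductive hypothesis. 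Handling the $n+1$ equations coherently (rather than one $k$ at a time) requires a careful telescoping combination of such operators, using that $h_{\alpha,n}$ commutes with $f_{\alpha,l}$ for all $l\geq 1$, since $[h_{\alpha,n},f_{\alpha,l}]=-\alpha(h_\alpha)f_{\alpha,n+l}=0$ in $\g_n$. I expect this book-keeping to be the technical heart of the proof and the place where the argument is genuinely more involved than the $n=1$ case treated in \cite[Lemma~5.17]{Ch}.

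\textbf{Step 3 (Weighted version and assembly).} In the weighted setting, where $m_I \in M^{\lambda - (i_0+\cdots+i_n)\alpha}$, all operators appearing in Step 2 respect the weight grading, so the inductively constructed $\tilde m_I$ can be chosen to lie in the appropriate weight space of $M$, which is finite-dimensional by \eqref{fdweightspaces}. The resulting $\varphi$ is $U(\mathfrak{a})$-linear by construction, extends uniquely to a $U$-homomorphism $S_\alpha\to\phi_\alpha^{-1}(M)$ by Lemma \ref{homextensionlemma}, and is a weight vector of weight $\lambda$ by Lemma \ref{weightfunctionslemma}, hence lies in $G_\alpha M$ by Corollary \ref{weightfunctionscor}.
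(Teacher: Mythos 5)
Your Step~1 reformulation is correct and matches the paper's: parametrise $U(\mathfrak a)$-maps $A\to\phi_\alpha^{-1}(M)$ by families $\{v_I\}_{I\in\Z^{n+1}}$ with $e_{\alpha,k}\cdot v_I=v_{I-\delta_k}$, then extend the given family by induction. But Step~2 --- producing at each inductive stage a \emph{single} $x$ with $e_{\alpha,k}\cdot x=y_k$ for all $k=0,\dots,n$ simultaneously, given only the compatibility $e_{\alpha,j}y_k=e_{\alpha,k}y_j$ --- is the entire content of the lemma, and you have not proved it. You explicitly defer the key computation (``I expect this book-keeping to be the technical heart of the proof''), which is precisely the part that cannot be waved through: the observation that $h_{\alpha,n}^{-1}f_{\alpha,n-k}$ is a one-sided quasi-inverse to $e_{\alpha,k}$ up to a correction involving $e_{\alpha,k}$ does not obviously close up, because those correction terms do not live at ``smaller depth'' in your induction (they involve $e_{\alpha,k}y_k$, not some already-constructed datum), and the $n+1$ equations interact in a way you have not controlled. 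Moreover, $h_{\alpha,n}^{-1}$ fails to commute with $e_{\alpha,0}$ (as $[h_{\alpha,n},e_{\alpha,0}]=2e_{\alpha,n}\ne 0$), which further muddies the proposed telescoping.

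The paper handles this simultaneous-preimage problem in a structurally different way, and that structure is what makes the proof go through. It formalises the problem as exactness in the middle of the two-term complex
$$N\overset{\theta_1}{\longrightarrow}N^{n+1}\overset{\theta_2}{\longrightarrow}N^{\binom{n+1}{2}},\qquad \theta_1(x)=(e_{\alpha,0}x,\dots,e_{\alpha,n}x),\quad \theta_2(y)=(e_{\alpha,k}y_l-e_{\alpha,l}y_k)_{k<l},$$
taking $N$ to be the finitely generated $(\sl_2)_n$-submodule of $M$ generated by the $m_{I-\delta_k}$. It then proves $\ker\theta_2=\im\theta_1$ (together with the weight refinement) by reducing along a highest weight filtration of $N$ to the case of an $(\sl_2)_n$-Verma module, where the hypothesis $\mu(h_{\alpha,n})\neq 0$ forces all highest weight sections to actually be Vermas. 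For Vermas it runs an explicit $n$-step descent: one solves $e_n\cdot x_n=y_n$, and then repeatedly corrects $x_{k+1}$ to $x_k$ using the key structural fact that $\{m : e_l m=0 \text{ for } l\ge k+1\}$ is itself a Verma module over a smaller truncated $\sl_2$, with the base case $n=1$ verified by a direct weight-space dimension count. Nothing in that argument requires inverting $h_{\alpha,n}$ on $M$; instead it uses the structure theory of Verma modules for $(\sl_2)_n$. The nonartinian case is handled by passing to $N/N_{k-1}$ and lifting the solution, a step your sketch omits. To repair your proposal you would need to either carry out the telescoping explicitly (and address the failure of commutativity of $h_{\alpha,n}^{-1}$ with $e_{\alpha,0}$), or reformulate as the paper does in terms of Koszul-type exactness and reduce to Vermas.
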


\begin{proof}
We construct elements $m_{i_0, \dots, i_n}$ for $(i_0, \dots, i_n) \in \Z^{n+1} \backslash \mathcal{I}$ inductively such that the $m_{i_0, \dots, i_n}$ satisfy conditions (i) and (ii) above for any $(i_0, \dots, i_n) \in \Z^{n+1}$. Then observe that (i) and (ii) ensure that setting $\varphi(f_{\alpha, 0}^{-i_0} \dots f_{\alpha, n}^{-i_n}) = m_{i_0, \dots, i_n}$ defines a $U(\mathfrak{a})$-homomorphism $\varphi: A \rightarrow \phi_\alpha^{-1}(M)$, since it is enough to check that $m_{i_0, \dots, i_k - 1, \dots i_n} = f_{\alpha, k} \cdot_{\alpha^{-1}} \varphi(f_{\alpha, 0}^{-i_0} \dots f_{\alpha, n}^{-i_n}) = f_{\alpha, k} \cdot_{\alpha^{-1}} m_{i_0, \dots, i_n} = e_{\alpha, k} \cdot m_{i_0, \dots, i_n}$ for any $0 \leq k \leq n$ and $(i_0, \dots, i_n) \in \Z^{n+1}$.

To construct such $m_{i_0, \dots, i_n}$, let $(i_0, \dots, i_n) \in \Z^{n+1} \backslash \mathcal{I}$ be such that $i_0 + \dots + i_n$ is minimal among elements of $\Z^{n+1} \backslash \mathcal{I}$. Then in particular, $(i_0, \dots, i_k - 1, \dots, i_n) \in \mathcal{I}$ for each $0 \leq k \leq n$. Let $(\sl_2)_n$ be the truncated current algebra on the copy of $\sl_2$ spanned by $e_\alpha, h_\alpha, f_\alpha$ and let $N$ be the $(\mathfrak{sl}_2)_n$-module generated by $\{m_{i_0, \dots, i_k - 1, \dots, i_n}: 0 \leq k \leq n\}$, so $N \in \calO^{(\mu)}((\mathfrak{sl}_2)_n)$. Here we make a slight abuse of notation, identifying $\mu$ with the restriction to $\h_n \cap (\sl_2)_n$. Then we use the following claim to construct $m_{i_0, \dots, i_n} \in N \subseteq M$ such that $e_{\alpha, k} \cdot m_{i_0, \dots, i_n} = m_{i_0, \dots, i_k - 1, \dots, i_n}$.


\begin{claim}
Consider the maps:
\[N \overset{\theta_1}{\longrightarrow} N^{n+1} \overset{\theta_2}{\longrightarrow} N^{\frac{1}{2}n(n+1)}\]
given by $\theta_1(x) = (e_{\alpha, 0} \cdot x, \dots, e_{\alpha, n} \cdot x)$ and $\theta_2(y_0, \dots, y_n) = (e_{\alpha, k} \cdot y_l - e_{\alpha, l} \cdot y_k)_{0 \leq k < l \leq n}$. Then $\ker(\theta_2) = \im(\theta_1)$. Furthermore, if $(y_0, \dots, y_n) \in \ker(\theta_2)$ and $y_k \in N \cap M^\lambda$ for all $0 \leq k \leq n$, then there exists $x \in M^{\lambda - \alpha}$ such that $\theta_1(x) = (y_0, \dots, y_n)$.
\end{claim}


We now prove the claim, which will take several steps. 

First observe that $\theta_2 \circ \theta_1 = 0$, so certainly $\ker(\theta_2) \supseteq \im(\theta_1)$. Hence we only need to show that $\im(\theta_1) \supseteq \ker(\theta_2)$. Throughout the proof of the claim, we write $e_k$ for $e_{\alpha, k}$.

We first deal with the case where $n = 1$ and $N = M_{\gamma}$ is a Verma module. In this case, we consider the restriction of these maps to certain weight spaces in the following way (for any $\lambda \in \h^*$):
\[N^\lambda \overset{\theta_1}{\longrightarrow} (N^{\lambda + \alpha})^2 \overset{\theta_2}{\longrightarrow} N^{\lambda + 2\alpha}\]
Now, either $\dim(N^{\lambda + \alpha}) = 0$, in which case $\im(\theta_1) = \ker(\theta_2)$ automatically, or the dimensions of these weight spaces satisfy $\dim(N^\lambda) = n + 1$, $\dim(N^{\lambda + \alpha}) = n$, and $\dim(N^{\lambda + 2\alpha}) = n - 1$. Hence by considering dimensions and the fact that $\ker(\theta_2) \supseteq \im(\theta_1)$, it is enough to show that $\theta_1$ is injective and $\theta_2$ is surjective. We can compute that $e_1$ acts on the basis vectors $f_1^i f_0^j \otimes 1$ by:
\begin{eqnarray}
\label{e:aneqinthemiddle}
e_1 \cdot (f_1^i f_0^j \otimes 1) = \mu j f_1^i f_0^{j-1} \otimes 1 - j(j-1)f_1^{i+1} f_0^{j-2} \otimes 1
\end{eqnarray}
so by considering $\theta_2(x, 0)$, we see $\theta_2$ is surjective using \eqref{e:aneqinthemiddle} and an inductive argument. We also see that $e_1 \cdot v = 0$ if and only if $v = f_1^i \otimes 1$. Since $\mu \neq 0$, we can show that $e_0 \cdot f_1^i \otimes 1 \neq 0$, so $\theta_1$ is injective as required.

We now deal with the case where $n \geq 1$ and $N = M_{\lambda}$  is a Verma in the $\mu$-Jordan block, so $\lambda|_{\ge 1} = \mu$. We use the following facts, which can be verified by computing the action of $e_n$ on the basis elements $f_0^{i_0} \dots f_n^{i_n} \otimes 1$ of  $M_{\lambda}$ and recalling that $\mu(h_\alpha) \neq 0$:
\begin{enumerate}
\item[(a)] $e_n \cdot N = N$, which uses an inductive argument with \eqref{e:aneqinthemiddle}.
\item[(b)] $e_n \cdot m = 0$ if and only if $m \in \vspan\{f_1^{i_1} \dots f_n^{i_n} \otimes 1: i_1, \dots, i_n \geq 0\}$. The latter is isomorphic as a $U(\langle e_0, \dots, e_{n-1} \rangle)$-module to the Verma module $M_{\gamma}$ over $\vspan\{ e_{\alpha, i}, h_{\alpha,i}, f_{\alpha,i} \mid i=0,...,n-1\} \cong ((\sl_{2})_\alpha)_{n-1}$ where $\gamma(h_{\alpha, i}) := \mu(h_{\alpha, i+1})$.
\item[(c)] Applying (b) repeatedly, we see that for any $k \geq 1$, we have that $e_l \cdot m = 0$ for all $k \leq l \leq n$ if and only if $m \in \vspan\{f_{n-k+1}^{i_{n-k+1}} \dots f_n^{i_n} \otimes 1: i_{n-k+1}, \dots, i_n \geq 0\}$.
\end{enumerate}
To ease notation slightly we will write $\mathfrak{a}_{(i)} = \langle e_0,...,e_i\rangle$, write $(\sl_2)_{(i)}$ for the Lie algebra $((\sl_2)_\alpha)_i$ and $\gamma_{(i)}$ for the character of $\vspan\{h_{\alpha, 0},..., h_{\alpha,i}\}$ given by $\gamma_{(i)}(h_{\alpha,j}) := \mu(h_{\alpha, n-i})$.

Now let $(y_0, \dots, y_n) \in \ker(\theta_2)$, i.e. $e_k \cdot y_l - e_l \cdot y_k = 0$ for all $0 \leq k, l \leq n$. By fact (a), there certainly exists an $x_n$ such that $e_n \cdot x_n = y_n$. We then inductively construct $x_k$ for $2 \leq k \leq n$ such that $e_l \cdot x_k = y_l$ for all $k \leq l \leq n$. Note that the cases $k = 0, 1$ will be dealt with by an additional argument immediately afterwards, and we will then obtain $x_0,...,x_n$ such that $\theta(x_0,...,x_n) = (y_0,...,y_n)$.

Suppose we have constructed $x_{k+1}$ such that $e_l \cdot x_{k+1} = y_l$ for all $k+1 \leq l \leq n$. For any $k+1 \leq l \leq n$, consider $e_l \cdot (e_k \cdot x_{k+1} - y_k) = e_k \cdot (e_l \cdot x_{k+1}) - e_l \cdot y_k = e_k \cdot y_l - e_l \cdot y_k = 0$. Hence by fact (c), we have $e_k \cdot x_{k+1} - y_k \in \vspan\{f_{n-k}^{i_{n-k}} \dots f_n^{i_n} \otimes 1: i_{n-k}, \dots, i_n \geq 0\}$. But as a $U(\mathfrak{a}_{(k)})$-module, this is isomorphic to $M_{\gamma_{(k)}}$, so by fact (a) there exists $x_k' \in \vspan\{f_{n-k}^{i_{n-k}} \dots f_n^{i_n} \otimes 1: i_{n-k}, \dots, i_n \geq 0\}$ such that $e_k \cdot x_k' = e_k \cdot x_{k+1} - y_k$, and by fact (c) $e_l \cdot x_k' = 0$ for all $k+1 \leq l \leq n$. Setting $x_k = x_{k+1} - x_k'$, we see that $e_k \cdot x_k = e_k \cdot x_{k+1} - e_k \cdot x_k' = e_k \cdot x_{k+1} - (e_k \cdot x_{k+1} - y_k) = y_k$, and for $k+1 \leq l \leq n$ we have $e_l \cdot x_k = e_l \cdot x_{k+1} - e_l \cdot x_k' = y_l - 0$. Hence we have constructed $x_k$ with the desired properties.

Now consider $y_1' = e_1 \cdot x_2 - y_1$ and $y_0' = e_0 \cdot x_2 - y_0$. For $2 \leq l \leq n$, we have $e_l \cdot y_1' = e_l \cdot (e_1 \cdot x_2 - y_1) = e_1 \cdot y_1 - e_l \cdot y_1 = 0$, and similarly $e_l \cdot y_0' = 0$, so $y_0', y_1' \in \vspan\{f_{n-1}^{i_{n-1}} f_n^{i_n} \otimes 1: i_{n-1}, i_n \geq 0\}$ which is isomorphic to $M_{\gamma_{(1)}}$ (Verma module over $((\sl_2)_{(1)}$) as a $U(\langle e_0, e_1 \rangle)$-module. In addition, since $(y_0,...,y_n) \in \ker \theta$ we have $e_0 \cdot y_1' - e_1 \cdot y_0' = 0$, so we can apply the case where $N$ is a Verma module and $n=1$, proved earlier, to find $x'$ such that $e_0 \cdot x' = y_0'$, $e_1 \cdot x' = y_1'$, and $e_l \cdot x' = 0$ for $2 \leq l \leq n$. Then setting $x = x_2 - x'$, we have that $\theta_1(x) = (y_0, \dots, y_n)$.

If $N$ is not a Verma module, let $0 = N_0 \subseteq N_1 \subseteq \dots \subseteq N_{k-1} \subseteq N_k = N$ be a filtration of $N$ such that each quotient is a highest weight module. Since $\mu(h_\alpha) \neq 0$, all these highest weight modules must in fact be Verma modules by \cite[7.1]{W}. Then, given $(y_0, \dots, y_n) \in \ker(\theta_2)$, in the quotient $N/N_{k-1}$ we have that there exists $x \in N$ such that $e_l \cdot x + N_{k-1} = y_l + N_{k-1}$ for all $0 \leq l \leq n$. Hence $e_l \cdot x - y_l \in N_{k-1}$ for all $0 \leq l \leq n$. But $e_{l'} \cdot (e_l \cdot x - y_l) = e_l \cdot (e_{l'} \cdot x - y_{l'})$ for all $0 \leq l, l', \leq n$, so by induction on $k$, there exists $x' \in N_{k-1}$ such that $e_l \cdot x' = e_l \cdot x - y_l$ for all $0 \leq l \leq n$. Note that in this final argument we have used the fact that $y_l' := e_l \cdot x - y_l$ gives a collection of elements lying in the kernel of $\theta$, which allows us to apply the inductive hypothesis. Hence $e_l \cdot (x-x') = y_l$ for all $0 \leq l \leq n$, so $\im(\theta_1) \supseteq \ker(\theta_2)$ as required.

Finally, if $y_0, \dots, y_n \in M^\lambda \cap N$, then given $x \in N \subseteq M$ such that $\theta_1(x) = (y_0, \dots, y_n)$, by considering weight spaces we may replace $x$ with its component $x^{\lambda - \alpha}$ in the $\lambda - \alpha$ weight space and $\theta_1(x) = \theta_1(x^{\lambda - \alpha})$, proving the final part of Claim $(\ast)$.

Since $e_{\alpha, k} \cdot m_{i_0, \dots, i_l -1, \dots, i_n} = m_{i_0, \dots, i_k - 1, \dots, i_l - 1, \dots i_n} = e_{\alpha, l} \cdot m_{i_0, \dots, i_k - 1, \dots i_n}$ for all $0 \leq k < l \leq n$, this claim then allows us to pick $m_{i_0, \dots, i_n}$ such that $e_{\alpha, k} \cdot m_{i_0, \dots, i_n} = m_{i_0, \dots, i_k - 1, \dots i_n}$ for all $0 \leq k \leq n$, and if $m_{i_0, \dots, i_k-1, \dots, i_n}$ has weight $\lambda$ for all $0 \leq k \leq n$, then we can choose $m_{i_0, \dots, i_n}$ to have weight $\lambda - \alpha$. Hence applying this inductively, we can construct for all $(i_0, \dots, i_n) \in \Z^{n+1}$ elements $m_{i_0, \dots, i_n} \in M$ satisfying conditions (i) and (ii).

Suppose there exists $\lambda \in \h^*$ such that $\wt(m_{i_0, \dots, i_n}) = \lambda - (i_0 + \dots + i_n)\alpha$ for all $(i_0, \dots, i_n) \in \mathcal{I}$. Then by construction $m_{i_0, \dots, i_n}$ is weight for all $(i_0, \dots, i_n) \in \Z^{n+1}$, so by Corollary \ref{weightfunctionscor}  we have constructed is a weight vector.
\end{proof}

We are now ready to establish one of the main ingredients towards the proof of Theorem~\ref{maintwistingtheorem}.
\begin{lemma}
\label{maintwistingpartc}
For any $N \in \calO^{(s_\alpha(\mu))}(\g_n)$, the map $\epsilon_N : T_\alpha G_\alpha N \rightarrow N$ given by $\epsilon_N(s \otimes g) = g(s)$ is an isomorphism.
\end{lemma}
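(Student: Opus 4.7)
The plan is to mirror, in its dual form, the argument used for $\psi_M$ in Lemma~\ref{maintwistingpartb}. Well-definedness of $\epsilon_N$ on the balanced tensor product and its $U$-equivariance with respect to the twisted action on $T_\alpha G_\alpha N$ are routine: the former follows from the defining identity $(u\cdot g)(s) = g(s\cdot u)$, and the latter from the fact that $g$ is a $U$-homomorphism into $\phi_\alpha^{-1}(N)$. A key preliminary observation is that $M := \phi_\alpha^{-1}(N)$ lies in $\calO^{(\mu)}(\g_n)$: indeed $\phi_\alpha$ acts on $\h$ as $s_\alpha$, so for any $h \in \h_n^{\ge 1}$ the element $h$ acts on $\phi_\alpha^{-1}(N)$ as $s_\alpha(h)$ acts on $N$, and the generalised eigenvalue is $s_\alpha(\mu)(s_\alpha(h)) = \mu(h)$. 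Consequently the hypothesis $\mu(h_{\alpha,n}) \neq 0$ carries over to $M$, and the tools of Lemmas~\ref{tensorzerolemma},~\ref{elementformlemma} and~\ref{homconstruction} all apply to $M$.

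For injectivity, let $x \in T_\alpha G_\alpha N$ lie in the kernel of $\epsilon_N$. By Lemma~\ref{elementformlemma} one may write $x = f_{\alpha,0}^{-i_0}\cdots f_{\alpha,n}^{-i_n} \otimes g$ for a single weight element $g \in G_\alpha N$, so that the hypothesis becomes $g(f_{\alpha,0}^{-i_0}\cdots f_{\alpha,n}^{-i_n}) = 0$ in $M$. Because $g$ is $U$-equivariant, one may increase the exponents uniformly and obtain $g(f_{\alpha,0}^{-k}\cdots f_{\alpha,n}^{-k}) = 0$ for all $k \ge \max_j i_j$; choosing $k$ so large that the corresponding weight space in $M$ is forced to vanish by \eqref{fdweightspaces}, Lemma~\ref{tensorzerolemma} applied to $M$ converts this into the assertion that $f_{\alpha,0}^{-i_0}\cdots f_{\alpha,n}^{-i_n} \otimes g = 0$ in $S_\alpha \otimes_U G_\alpha N$, so $x = 0$.

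For surjectivity, fix a nonzero weight vector $v \in N^\nu$; the goal is to produce $g \in G_\alpha N$ and $k > 0$ with $g(f_{\alpha,0}^{-k}\cdots f_{\alpha,n}^{-k}) = v$. To construct $g$, I will invoke Lemma~\ref{homconstruction} applied to $M$: one must produce a coherent family $\{m_{i_0,\dots,i_n}\} \subseteq M$ of weight vectors satisfying $e_{\alpha,l} \cdot m_{i_0,\dots,i_n} = m_{i_0,\dots,i_l-1,\dots,i_n}$ in $M$, equivalently $f_{\alpha,l} \cdot m_{i_0,\dots,i_n} = m_{i_0,\dots,i_l-1,\dots,i_n}$ when the action is read back into $N$. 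Taking $k$ large, setting $m_{k,\dots,k} = v$, and building the higher-index $m$'s as simultaneous $f_{\alpha,l}$-preimages of the lower-index ones, the problem reduces via Lemma~\ref{finitefiltration} to the case where $N$ is a highest weight module, and then to the Verma case, where an $\sl_2$-computation using $s_\alpha(\mu)(h_{\alpha,n}) \neq 0$ (dual to the $\sl_2$-step at the end of Lemma~\ref{maintwistingparta}) provides the required preimages with the correct weight and compatibility data. Lemma~\ref{homconstruction} then yields the desired weight element $g \in G_\alpha N$, so $\epsilon_N$ is surjective on weight vectors, hence on all of $N$.

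The main obstacle is this dual construction of the coherent family for surjectivity. With $n+1$ Takiff coordinates rather than the two in \cite[Lemma~5.7(c)]{Ch}, one must solve at each induction step the system $f_{\alpha,l} \cdot m'_l = m_l$ together with the cross-compatibilities $f_{\alpha,l} \cdot m'_{l'} = f_{\alpha,l'} \cdot m'_l$ ensuring consistency across all coordinates. This mirrors Claim $(\ast)$ in the proof of Lemma~\ref{homconstruction}, but with $f_\alpha$-preimages in place of $e_\alpha$-preimages; as there, the analysis will pass from a general $N \in \calO^{(s_\alpha(\mu))}(\g_n)$ to the Verma case via Lemma~\ref{finitefiltration}, after which an inductive $\sl_2$-calculation exploiting $\mu(h_{\alpha,n}) \neq 0$ should close the argument.
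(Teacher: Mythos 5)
Your surjectivity argument is heading in the right direction but is more elaborate than needed: once you pick $a_0,\dots,a_n$ with $e_{\alpha,k}^{a_k}\cdot m = 0$, the data $m_{i_0,\dots,i_n} := e_{\alpha,0}^{a_0-i_0}\cdots e_{\alpha,n}^{a_n-i_n}\cdot m$ on the box $\{i_k\le a_k\}$ (extended by $0$) already satisfies conditions (i) and (ii) of Lemma~\ref{homconstruction}, and that lemma does the rest; the Verma-module reduction you describe is the internal content of Claim $(\ast)$ inside that lemma and does not need to be redone in the application.

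The injectivity part has a genuine gap. From $g(f_{\alpha,0}^{-i_0}\cdots f_{\alpha,n}^{-i_n})=0$, $U$-equivariance of $g$ lets you \emph{decrease} the exponents, not increase them: $g(f_{\alpha,0}^{-j_0}\cdots f_{\alpha,n}^{-j_n}) = (\prod_k f_{\alpha,k}^{i_k-j_k})\cdot g(f_{\alpha,0}^{-i_0}\cdots f_{\alpha,n}^{-i_n}) = 0$ only for $j_k\le i_k$. Your claim that one can push the vanishing up to $g(f_{\alpha,0}^{-k}\cdots f_{\alpha,n}^{-k})=0$ for all large $k$ is false, and this is exactly what the argument cannot supply for free. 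Moreover, even granting it, applying Lemma~\ref{tensorzerolemma} as you suggest would require $G_\alpha N\in\calO^{(\mu)}(\g_n)$ (that is Lemma~\ref{maintwistingpartd}, which in the paper's logical order is established \emph{after} and using the present lemma, so this risks circularity) and would require exhibiting $g$ in the form $\sum_k f_{\alpha,k}^{i_k}\cdot g_k'$, which is precisely condition (c) of Lemma~\ref{tensorzerolemma} and is the real content of the proof. The paper handles this by iteratively using Lemma~\ref{homconstruction} (applied to $N$, not $G_\alpha N$) to construct a chain of elements $g_n, g_{n-1},\dots,g_0\in G_\alpha N$, each $g_s$ factoring as $f_{\alpha,s}^{i_s}\cdot g_s'$, whose sum recovers $g$; this step is absent from your proposal and cannot be replaced by the weight-space vanishing shortcut you propose.
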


\begin{proof}

First we show $\epsilon_N$ is a homomorphism. Let $u \in U$, let $s \in S_\alpha$, and let $g \in G_\alpha N$. Then
\[u \cdot \epsilon_N(s \otimes g) = u \cdot g(s) = \phi_\alpha^{-1} \phi_\alpha (u) \cdot g(s) = g(\phi_\alpha(u) \cdot s) = \epsilon_N((\phi_\alpha(u) \cdot s) \otimes g) = \epsilon_N(u \cdot (s \otimes g))\]
so $\epsilon_N$ is certainly a $U$-homomorphism.

Now let $m \in N$ be a weight vector, and choose $a_0, \dots, a_n$ such that $e_{\alpha, k}^{a_k} \cdot m = 0$ for each $k$. Let
\begin{align*}
\mathcal{I} &= \{(i_0, \dots, i_n) : i_k \leq 0 \mbox{ for some } k \mbox{ or } i_k \leq a_k \mbox{ for all } k\} \subseteq \Z^{n+1}\\
m_{i_0, \dots, i_n} &= e_{\alpha, 0}^{a_0 - i_0} \dots e_{\alpha, n}^{a_n - i_n} \cdot m \mbox{ if } i_k \leq a_k \mbox{ for all } k\\
m_{i_0, \dots, i_n} &= 0 \mbox{ otherwise.}
\end{align*}

Then we can use Lemma \ref{homconstruction} to construct $\varphi \in \Hom_U(S_\alpha, \phi_\alpha^{-1}(M))$ which is a weight vector and therefore in $G_\alpha N$ such that $\varphi(f_{\alpha, 0}^{-a_0} \dots f_{\alpha, n}^{-a_n}) = m$. Hence $\epsilon_N(f_{\alpha, 0}^{-a_0} \dots f_{\alpha, n}^{-a_n} \otimes \varphi) = m$, so $\epsilon_N$ is surjective.

We now show $\epsilon_N$ is injective. By Lemma \ref{elementformlemma} any element of $T_\alpha G_\alpha N$ may be written as $f_{\alpha, 0}^{-i_0} \dots f_{\alpha, n}^{-i_n} \otimes g$ for some $g \in G_\alpha N$. Suppose $\epsilon_N(f_{\alpha, 0}^{-i_0} \dots f_{\alpha, n}^{-i_n} \otimes g) = g(f_{\alpha, 0}^{-i_0} \dots f_{\alpha, n}^{-i_n}) = 0$. We may assume $g$ is weight (i.e. an $\h$-eigenvector). If not, we may write $g$ as a sum of $g_\lambda \in (G_\alpha N)^\lambda$, which by considering weight spaces must all satisfy $g_\lambda(f_{\alpha, 0}^{-i_0} \dots f_{\alpha, n}^{-i_n}) = 0$, and then apply the following argument to each $g_\lambda$.

Observe that 
\begin{eqnarray}
\label{e:consistency}
g(f_{\alpha, 0}^{-j_0} \dots f_{\alpha, n}^{-j_n}) = 0 \text{ if either } j_k \leq 0 \text{ for some } 0 \leq k \leq n \text{  or all } j_k \leq i_k.
\end{eqnarray}
Now choose
\begin{align*}
\mathcal{I} &= \Big\{(j_0, \dots, j_n) \in \Z^{n+1} : \begin{array}{c} j_k \leq i_k \mbox{ for all } 0 \leq k \leq n-1 \mbox{ or } j_n  \leq i_n \\ \mbox{ or } j_k \leq 0 \mbox{ for some } 0 \leq k \leq n\end{array} \Big\} \subseteq \Z^{n+1} \\
m_{j_0, \dots, j_n} &= g(f_{\alpha, 0}^{-j_0} \dots f_{\alpha, n}^{-j_n}) \mbox{ if } j_k \leq i_k  \mbox{ for all } 0 \leq k \leq n-1\\
m_{j_0, \dots, j_n} &= 0 \mbox{ if } j_n \leq i_n \mbox{ or } j_k \leq 0 \mbox{ for some } 0 \leq k \leq n
\end{align*}
Then applying Lemma \ref{homconstruction} to this, we construct $g_n \in \Hom_U(S_\alpha, \phi_\alpha^{-1}(M))$ which is a weight vector (and hence in $G_\alpha N$) such that:

\begin{enumerate}
\item[(a)] $g_n(f_{\alpha, 0}^{-j_0} \dots f_{\alpha, n}^{-j_n}) = 0$ if $j_n \leq i_n$

\item[(b)] $g_n(f_{\alpha, 0}^{-j_0} \dots f_{\alpha, n}^{-j_n}) = g(f_{\alpha, 0}^{-j_0} \dots f_{\alpha, n}^{-j_n})$ if $j_k \leq i_k$ for all $0 \leq k \leq n-1$
\end{enumerate}
Note that these two conditions are not mutually exclusive but they are consistent by \eqref{e:consistency}.

By (a), we can define $g_n' \in G_\alpha N$ by setting $g_n'(f_{\alpha, 0}^{-j_0} \dots f_{\alpha, n-1}^{-j_{n-1}} f_{\alpha, n}^{-j_n+i_n}) = g_n(f_{\alpha, 0}^{-j_0} \dots f_{\alpha, n}^{-j_n})$ and so $g_n = f_{\alpha, n}^{i_n} \cdot g_n'$. By (b) we have $(g - g_n)(f_{\alpha, 0}^{-j_0} \dots f_{\alpha, n}^{-j_n}) = 0$ if either some $j_k \leq 0$ or if $j_k \leq i_k$ for all $0 \leq k \leq n-1$.

We now construct $g_s \in G_\alpha N$ inductively by setting
\begin{align*}
\mathcal{I} &= \{(j_0, \dots, j_n) \in \Z^{n+1} : j_k \leq i_k \mbox{ for all } 0 \leq k \leq s-1 \mbox{ or } j_s \leq i_s \mbox{ or } j_k \leq 0 \mbox{ for some } 0 \leq k \leq n\} \subseteq \Z^{n+1} \\
m_{j_0, \dots, j_n} &= (g - g_n - \dots - g_{s+1})(f_{\alpha, 0}^{-j_0} \dots f_{\alpha, n}^{-j_n}) \mbox{ if } j_k \leq i_k  \mbox{ for all } 0 \leq k \leq s-1\\
m_{j_0, \dots, j_n} &= 0 \mbox{ if } j_s \leq i_s \mbox{ or } j_k \leq 0 \mbox{ for some } 0 \leq k \leq n
\end{align*}
The element $g - g_n - \dots - g_{s+1}$ satisfies a condition analogous to \eqref{e:consistency}, which means that the definition of $m_{j_0,...,j_n}$ is consistent.

This $g_s$ satisfies:
\begin{enumerate}
\item[(a)] $g_s(f_{\alpha, 0}^{-j_0} \dots f_{\alpha, n}^{-j_n}) = 0$ if $j_s \leq i_s$

\item[(b)] $g_s(f_{\alpha, 0}^{-j_0} \dots f_{\alpha, n}^{-j_n}) = (g - g_n - \dots - g_{s+1})(f_{\alpha, 0}^{-j_0} \dots f_{\alpha, n}^{-j_n})$ if $j_k \leq i_k$ for all $0 \leq k \leq s-1$
\end{enumerate}
By (a), we can write this $g_s$ in the form $g_s = f_{\alpha, s}^{i_s} \cdot g_s'$ for some $g_s' \in G_\alpha N$. By (b) we have that $(g - g_n - \dots - g_s)(f_{\alpha, 0}^{-j_0} \dots f_{\alpha, n}^{-j_n}) = 0$ if $j_k \leq i_k$ for all $0 \leq k \leq s-1$. Hence we see that $g = f_{\alpha, 0}^{-i_0} \cdot g_0' + \dots + f_{\alpha, n}^{-i_n} \cdot g_n'$ for some $g_0', \dots, g_n' \in G_\alpha N$, so $f_{\alpha, 0}^{-i_0} \dots f_{\alpha, n}^{-i_n} \otimes g = \sum (f_{\alpha, 0}^{-i_0} \dots f_{\alpha, n}^{-i_n}) \otimes f_{\alpha, k}^{i_k} g_k' = 0$. Hence $\epsilon_N$ is injective.
\end{proof}

The proof of this next lemma is almost identical to \cite[Lemma~5.8]{Ch}, and so we skip it for the sake of brevity.
\begin{lemma}
\label{maintwistingpartd}
$G_\alpha$ restricts to a functor $\calO^{(s_\alpha(\mu))}(\g_n) \rightarrow \calO^{(\mu)}(\g_n)$.
\end{lemma}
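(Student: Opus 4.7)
The plan is to verify that $G_\alpha N$ satisfies axioms $(\calO1)$--$(\calO3)$ and lies in the Jordan block $\calO^{(\mu)}(\g_n)$. Axiom $(\calO2)$ is immediate from the definition $G_\alpha N = \mathcal{H}(\Hom_U(S_\alpha, \phi_\alpha^{-1}(N)))$; the remaining axioms will follow by combining weight-space bookkeeping with the counit isomorphism of Lemma~\ref{maintwistingpartc}.

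For the weight analysis, Lemma~\ref{weightfunctionslemma} shows that a nonzero $g \in (G_\alpha N)^\lambda$ must send $f_{\alpha, 0}^{-i_0} \cdots f_{\alpha, n}^{-i_n}$ into $N^{s_\alpha(\lambda) - (i_0 + \cdots + i_n)\alpha}$ for all $i_j > 0$. Since the weights of $N$ are bounded above by a finite union of downward cones, the weights of $G_\alpha N$ are likewise bounded, yielding local nilpotency of $\n_n^+$. Fixing $\lambda$ and choosing $K$ large enough so that $N^{s_\alpha(\lambda) - (i_0 + \cdots + i_n)\alpha}$ vanishes whenever $\min(i_j) > K$, the evaluation $g \mapsto g(f_{\alpha,0}^{-K} \cdots f_{\alpha,n}^{-K})$ is injective on $(G_\alpha N)^\lambda$ by $U$-linearity and the relations of Lemma~\ref{relationslemma}, so weight spaces are finite dimensional and $\h_n^{\ge 1}$ acts locally finitely, establishing $(\calO3)$. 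Membership in the $\mu$-block follows from a direct computation: using $(h \cdot g)(s) = g(s \cdot h)$, the commutation relations in Lemma~\ref{relationslemma}, and the identities $\phi_\alpha(h_i) = s_\alpha(h)_i$ and $s_\alpha(\mu)(s_\alpha(h)_i) = \mu(h_i)$, one transfers the generalised eigenvalue $s_\alpha(\mu)$ on $N$ to the generalised eigenvalue $\mu$ on $G_\alpha N$, by induction on the degree in $f_{\alpha,\ast}^{-1}$.

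The main obstacle is $(\calO1)$. The plan is to pick generators $n_1, \ldots, n_m$ of $N$ and, using surjectivity from the proof of Lemma~\ref{maintwistingpartc}, lift each to $g_i \in G_\alpha N$ and $s_i \in S_\alpha$ with $\epsilon_N(s_i \otimes g_i) = n_i$. Let $M' \subseteq G_\alpha N$ be the $U$-submodule generated by $g_1, \ldots, g_m$. Right-exactness of $T_\alpha$ applied to $0 \to M' \to G_\alpha N \to G_\alpha N / M' \to 0$, composed with $\epsilon_N$, shows that $T_\alpha M' \to N$ is surjective, hence $T_\alpha(G_\alpha N / M') = 0$. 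Since $G_\alpha N / M'$ inherits weights bounded above from $G_\alpha N$, one concludes by an argument analogous to Lemma~\ref{tensorzerolemma}: if $\overline g \in G_\alpha N / M'$ were a nonzero weight vector, the vanishing of every tensor $f_{\alpha, 0}^{-i_0} \cdots f_{\alpha, n}^{-i_n} \otimes \overline g$ would force a decomposition $\overline g = \sum_{k} f_{\alpha, k}^{i_k} \overline g_k$ for every choice of $i_k > 0$; taking the $i_k$ large enough that the $\overline g_k$ lie in empty weight spaces above $\overline g$'s weight then yields $\overline g = 0$, so $M' = G_\alpha N$ as required.
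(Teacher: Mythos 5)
Your proposal takes a structurally different route from the paper: you try to verify axioms $(\calO1)$--$(\calO3)$ for $G_\alpha N$ directly, whereas the paper first uses Lemma~\ref{extensionlemma}, Lemma~\ref{finitefiltration}, and left-exactness of $G_\alpha$ to reduce to the case where $N$ is a highest weight module, and then constructs an explicit highest weight generator $g_m$ of $G_\alpha N$ (via Lemma~\ref{homconstruction}) and shows it generates by playing off the counit isomorphism of Lemma~\ref{maintwistingpartc} against the unit (Lemma~\ref{maintwistingpartb}) and injectivity properties of $G_\alpha$ on $\calO^{(\mu)}$. There are two genuine gaps in your version.

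First, the finite-dimensionality claim is based on a factual error. You choose $K$ ``large enough so that $N^{s_\alpha(\lambda) - (i_0 + \cdots + i_n)\alpha}$ vanishes whenever $\min(i_j) > K$''; but increasing the $i_j$ pushes the weight $s_\alpha(\lambda) - (i_0 + \cdots + i_n)\alpha$ \emph{downward}, and in category $\O$ weights are bounded \emph{above}, not below (Verma modules have nonzero weight spaces arbitrarily far down every $\alpha$-string). So these spaces do not vanish and the proposed evaluation map $g \mapsto g(f_{\alpha,0}^{-K}\cdots f_{\alpha,n}^{-K})$ has no obvious reason to be injective: knowing $g$ vanishes at $f^{-K}\cdots f^{-K}$ only determines $g(f^{-j_0}\cdots f^{-j_n})$ for $j_l \le K$, and a nonzero $g$ could be supported on exponents beyond $K$.

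Second, the finite-generation step at the end is circular. You apply ``an argument analogous to Lemma~\ref{tensorzerolemma}'' to the quotient $Q = G_\alpha N / M'$ to conclude from $T_\alpha Q = 0$ that $Q = 0$. But Lemma~\ref{tensorzerolemma} is proved only for modules in $\calO^{(\mu)}(\g_n)$ (its proof in \cite{Ch} depends on freeness of such modules over $U(\mathfrak{a})$, established via Lemma~\ref{finitefiltration}), and at this stage you do not yet know that $G_\alpha N$, much less $Q$, lies in $\calO^{(\mu)}(\g_n)$ --- that is precisely what you are trying to prove. The paper sidesteps this by working instead with the kernel $K$ of $T_\alpha(\iota) : T_\alpha L \to T_\alpha G_\alpha N$, where $L \subseteq G_\alpha N$ is the submodule generated by the constructed highest weight vector: $L$ is a highest weight module, hence $L \in \calO^{(\mu)}(\g_n)$, so $T_\alpha L \in \calO^{(\mu)}(\g_n)$ by Lemma~\ref{maintwistingparta}, so $K \in \calO^{(\mu)}(\g_n)$ (closure under submodules), and only then are Lemma~\ref{homconstruction} and the rest applied legitimately. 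You would need to first reduce to highest weight $N$ (as the paper does) for your $Q$-style argument to have the required input available.
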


This next result can be checked by following the argument for \cite[Lemma~5.8(e)]{Ch} verbatim.
\begin{lemma} 
\label{L:twistingnatural}
The transformations $$\psi: \id_{\calO^{(\mu)}(\g_n)} \rightarrow G_\alpha T_\alpha$$ and $$\epsilon: T_\alpha G_\alpha \rightarrow \id_{\calO^{(s_\alpha(\mu))}(\g_n)}$$ are natural.
\end{lemma}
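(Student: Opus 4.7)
The plan is to verify the naturality squares directly by unwinding the definitions of $T_\alpha$, $G_\alpha$ on morphisms. Since both statements are formal consequences of how these endofunctors act on morphisms (recorded immediately after the definition \eqref{e:introduceTandG}), no further analysis of the structure of $\O^{(\mu)}(\g_n)$ should be needed.

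For the naturality of $\psi$, fix a morphism $f: M \to M'$ in $\O^{(\mu)}(\g_n)$. We must show that
\[
(G_\alpha T_\alpha f) \circ \psi_M = \psi_{M'} \circ f.
\]
Chasing $m \in M$ around both composites, the right-hand side sends $m$ to the map $s \mapsto s\otimes f(m)$ in $G_\alpha T_\alpha M'$, while the left-hand side produces the composite $T_\alpha f \circ \psi_M(m)$, which by the definition of $T_\alpha$ on morphisms sends $s \mapsto T_\alpha f(s\otimes m) = s\otimes f(m)$. Hence both maps agree.

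For the naturality of $\epsilon$, fix a morphism $g: N \to N'$ in $\O^{(s_\alpha(\mu))}(\g_n)$. We must show that
\[
g \circ \epsilon_N = \epsilon_{N'} \circ (T_\alpha G_\alpha g).
\]
Chasing a generating element $s \otimes h \in T_\alpha G_\alpha N$ (with $h \in G_\alpha N$) around both composites, the left-hand side gives $g(\epsilon_N(s\otimes h)) = g(h(s))$, while the right-hand side gives $\epsilon_{N'}(s\otimes (g \circ h)) = (g\circ h)(s) = g(h(s))$, using that $G_\alpha g$ sends $h$ to $g \circ h$. These agree, which completes the verification.

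The only conceivable obstacle would be a well-definedness issue arising from the requirement that the maps we produce on the right land in the subspace of $\h$-weight vectors of $\Hom_U(S_\alpha, \phi_\alpha^{-1}(\bullet))$ taken inside $G_\alpha$; but this is automatic since $f$ and $g$ are $\h$-equivariant and $\mathcal{H}$ is functorial on $\h$-semisimple modules. Thus no new ideas beyond the definitions are required.
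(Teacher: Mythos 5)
Your proof is correct and follows essentially the same route as the paper's own argument (which in turn points to \cite[Lemma~5.8(e)]{Ch}): an explicit element chase through the naturality squares using the explicit formulas for how $T_\alpha$ and $G_\alpha$ act on morphisms, namely $T_\alpha(\chi)(s\otimes m) = s\otimes\chi(m)$ and $G_\alpha(\chi)(\rho) = \chi\circ\rho$. The closing remark on well-definedness (that $G_\alpha T_\alpha f$ and $T_\alpha G_\alpha g$ land back in the weight part $\mathcal{H}$ because $f,g$ are $\h$-equivariant) is correct and worth flagging, and the rest of the verification is the same purely formal computation as in the paper.
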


Finally Theorem~\ref{maintwistingtheorem} follows by combining Lemmas~\ref{maintwistingparta}, \ref{maintwistingpartb}, \ref{maintwistingpartc}, \ref{maintwistingpartd} and \ref{L:twistingnatural}.

\section{Composition Multiplicities}
\label{S:compmult}

\subsection{Definition of composition multiplicity}

Let $M \in \calO(\g_n)$. We define the \emph{character} of $M$ to be the function $\ch(M): \h^* \rightarrow \Z_{\geq 0}$ which sends $\lambda$ to $\dimension(M^\lambda)$, and the \emph{support} of $M$, denoted $\supp(M)$, to be the set $\supp(M)  = \{\lambda \in \h^* : M^\lambda \neq 0\}$.

We define the composition multiplicities $k_\lambda(M)$ of $M$ for $\lambda \in \h^*$ using the following result, whose statement and proof is very similar to \cite[Lemma~6.1]{Ch} or \cite[Proposition 8]{MS}.

\begin{lemma} 
\label{compmultdefn}
Let $\mu \in (\h_n^{\ge 1})^*$. For any $M \in \calO^{(\mu)}(\g_n)$, there are unique $\{k_\lambda(M) \in \Z_{\geq 0} : \lambda \in \h_n^*, \lambda_{\geq 1} = \mu\}$, which we refer to as {composition multiplicities}, such that 
\[\ch(M) = \sum k_\lambda(M) \ch(L_{\lambda})\]
\end{lemma}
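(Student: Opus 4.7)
The plan is to establish the two halves of the statement separately: uniqueness of the coefficients, and their existence as non-negative integers.

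For uniqueness I would exploit the cone structure of the supports. Recall that $\ch(L_\lambda)$ is supported on $\lambda_0 - \Z_{\ge 0}\Phi^+$ and takes the value $1$ at $\lambda_0$ (by \textsection\ref{ss:hwmodules}(1)). Inside $\O^{(\mu)}(\g_n)$, the simples are indexed by $\lambda \in \h_n^*$ with $\lambda_{\ge 1} = \mu$, and once $\mu$ is fixed, $\lambda$ is determined by $\lambda_0 \in \h^*$. Given a hypothetical nontrivial $\Z$-linear relation $\sum a_\lambda \ch(L_\lambda) = 0$, I would pick $\lambda$ with $a_\lambda \ne 0$ such that $\lambda_0$ is maximal in the root partial order among indices with nonzero coefficient (this maximum exists because the relevant set of $\lambda_0$'s is contained in a single finite union of downward cones). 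Evaluating the relation at $\nu = \lambda_0$, only the term $a_\lambda \ch(L_\lambda)(\lambda_0) = a_\lambda$ survives, contradicting $a_\lambda \ne 0$. This gives both uniqueness of the expansion and the fact that the $\ch(L_\lambda)$ are linearly independent.

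For existence, I would define $k_\lambda(M)$ recursively on the root order. For $\nu \in \h^*$ and $\lambda = (\nu, \mu) \in \h_n^*$, set
\[
k_\lambda(M) := \dim M^\nu - \sum_{\eta > \nu} k_{(\eta, \mu)}(M) \cdot \dim L_{(\eta, \mu)}^{\nu}.
\]
By \eqref{fdweightspaces} the set $\{\eta \in \supp(M) : \eta \ge \nu\}$ is finite, and only those $\eta$ contribute, so the recursion is well-founded and produces integers. By construction the formal sum $\sum_\lambda k_\lambda(M)\ch(L_\lambda)$ agrees with $\ch(M)$ pointwise on $\h^*$. The genuine content is the non-negativity $k_\lambda(M) \ge 0$.

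The main obstacle, then, is establishing non-negativity, since $\O(\g_n)$ is not artinian and $M$ need not admit a finite composition series. The plan is to reduce first to highest weight modules via Lemma~\ref{finitefiltration}: because $\ch$ is additive on short exact sequences and $k_\lambda$ depends linearly on $\ch$ by its recursive definition, the claim for a general $M$ follows once it is proven for each highest weight section. For $V \in \O^{(\mu)}(\g_n)$ highest weight of weight $\lambda$, one has a surjection $V \twoheadrightarrow L_\lambda$ with kernel $K$; the top weight $\lambda_0$ contributes $k_\lambda(V) = 1$, and $K$ lies again in $\O^{(\mu)}(\g_n)$ with $\supp(K) \subseteq \lambda_0 - \Z_{>0}\Phi^+$. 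Iterating this construction — at each stage selecting a maximal weight of the remaining kernel, producing a highest weight vector via Lemma~\ref{maxhwcorollary} (which must have weight in $\{(\eta, \mu) : \eta \in \h^*\}$ by the Jordan block decomposition of Lemma~\ref{generalisedeigenvalues} together with Remark~\ref{R:Vermasinblocks}), and peeling off the resulting simple top quotient — produces an infinite descending chain whose simple subquotients are the $L_{(\eta, \mu)}$. The process need not terminate, but for any fixed $\nu \in \h^*$ only finitely many stages contribute to weights $\ge \nu$, since the set $\{\eta \in \h^* : \eta \ge \nu,\ L_{(\eta, \mu)} \text{ occurs}\}$ is finite by the cone bound. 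Thus for each $\nu$ the coefficient $k_{(\nu, \mu)}(M)$ equals a genuine count of simple quotients appearing in this iterative peeling process, which is manifestly a non-negative integer, completing the proof.
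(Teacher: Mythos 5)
Your uniqueness argument matches the paper's (both exploit the unitriangular structure of $\ch(L_\lambda)$ in the root cone order and pick a maximal $\lambda_0$ with nonzero coefficient), and your recursive definition of $k_\lambda(M)$ by inverting the triangular system is a clean reformulation. The genuine weak point is the non-negativity step. After peeling $L_\lambda$ off a highest weight module $V$, the kernel $K$ is generally \emph{not} a highest weight module, so there is no natural surjection $K \onto L_{(\eta,\mu)}$ to peel off next. A highest weight vector of weight $(\eta,\mu)$ in $K$ generates a highest weight \emph{sub}module $H \subseteq K$ whose simple top $H/\operatorname{rad}(H) \cong L_{(\eta,\mu)}$ is a subquotient of $K$, not a quotient. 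So the ``infinite descending chain'' you describe does not have simple successive quotients, and the identification of $k_{(\nu,\mu)}(M)$ with ``a count of simple quotients appearing in the peeling'' is not justified as written.

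The fix, which is what the paper actually does, is to turn this into a well-founded \emph{branching} recursion: given $M$ and a target weight $\lambda$, pick $\nu$ maximal in $(\lambda + \Z_{\geq 0}\Phi^+) \cap \supp(M)$, take the highest weight submodule $K \subseteq M$ generated by a maximal vector of weight $(\nu,\mu)$, set $K' = \operatorname{rad}(K)$, and recurse on \emph{both} $M/K$ and $K'$ with the formula $k_\lambda(M) = k_\lambda(M/K) + k_\lambda(K') + \delta_{\nu,\lambda}$. Termination is forced by the strictly decreasing measure $\sum_{\eta \geq \lambda}\dim M^\eta$ (which is finite by \eqref{fdweightspaces}), since $\dim(M/K)^\nu < \dim M^\nu$ and $(K')^\nu = 0$. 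This makes non-negativity manifest while avoiding any appeal to a single composition-series-like chain (whose existence the paper only establishes later, in Lemma~\ref{compmultalternatedefn}, using parabolic induction and twisting functors — a circular dependency you would need to avoid here). Your argument could be repaired along these lines, or by re-applying Lemma~\ref{finitefiltration} to $K$ at each stage and recursing on its highest weight sections with the same measure.
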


We give another interpretation of these  which is closer to the notion of composition multiplicities in artinian categories. The following result is a natural generalisation of \cite[Lemma~6.2]{Ch}.

\begin{lemma}
\label{compmultalternatedefn}
Let $\mu \in (\h_n^{\ge 1})^*$ and $M \in \calO^{(\mu)}(\g_n)$. Then there exists $\mathcal{I} \subseteq (\Z_{\geq 0})^n$ and a descending filtration of $M$ indexed by by $\mathcal{I}$ with the lexicographic ordering, such that: 

\begin{enumerate}
\setlength{\itemsep}{5pt}
\item[(a)] If $(i_1, \dots, i_{k-1},  i_k, \dots, i_n) \in \mathcal{I}$ with $i_k > 0$ then $(i_1, \dots, i_{k-1}, i_k-1, 0, \dots, 0) \in \mathcal{I}$.

\item[(b)] Each quotient $M_{(i_1, \dots, i_n-1)}/M_{(i_1, \dots, i_n)}$ is simple.

\item[(c)] The intersection of all the $M_{(i_1, \dots, i_n)}$ is 0.
\end{enumerate}
Furthermore, for any such filtration $L_{\lambda}$ appears as a quotient $M_{(i_1, \dots, i_n-1)}/M_{(i_1, \dots, i_n)}$ precisely $k_\lambda(M)$ times.
\end{lemma}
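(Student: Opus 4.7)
The plan is to build the filtration by iterated extraction of simple quotients, with the lex-ordered multi-index $\mathcal{I} \subseteq (\Z_{\ge 0})^n$ encoding the transfinite nesting that arises because $\calO(\g_n)$ is not artinian. The argument will parallel the Takiff case \cite[Lemma~6.2]{Ch}, but with $n$ layers of nesting rather than one.

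The essential local step is that every nonzero $N \in \calO^{(\mu)}(\g_n)$ admits a maximal proper submodule: Lemma~\ref{finitefiltration} produces a finite highest weight filtration $0 = N_0 \subset \cdots \subset N_k = N$, the top quotient $N/N_{k-1}$ is highest weight and hence admits a unique simple quotient $L_\lambda$, and pulling back its kernel yields a maximal submodule of $N$ whose cokernel is $L_\lambda$. Iterating this construction produces the innermost chain of the filtration: $M_{(0,\ldots,0)} = M \supset M_{(0,\ldots,0,1)} \supset M_{(0,\ldots,0,2)} \supset \cdots$ with simple sections. Since $\calO(\g_n)$ is not artinian, the intersection $\bigcap_k M_{(0,\ldots,0,k)}$ need not be zero; I take this intersection to be $M_{(0,\ldots,1,0)}$ and restart the innermost construction inside it, and so on. Condition~(a) encodes this nesting exactly: incrementing any non-final coordinate resets all subsequent coordinates to $0$, marking the start of a new descending chain inside the previous intersection. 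Under the lex order, an $\mathcal{I}$ satisfying~(a) is order-isomorphic to an initial segment of the ordinal $\omega^n$.

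The main obstacle is verifying condition~(c), namely that the global intersection $\bigcap_\mathcal{I} M_{(i_1,\ldots,i_n)}$ vanishes. Each weight space $M^\nu$ is finite dimensional by~\eqref{fdweightspaces}, so on any fixed $M^\nu$ a descending chain of submodules must eventually stabilise; the subtlety lies in orchestrating the $n$-fold iteration so that every weight space becomes zero at some finite stage of the recursion rather than only at a transfinite limit. I expect the extra coordinates beyond the innermost one to correspond to the graded pieces $\g_n = \bigoplus_{i=0}^n \g_n^i$, with each outer coordinate increment guaranteeing that one further layer of $\n_n^-$ acts trivially on what remains, so that after $n$ nested exhaustions the generating action is killed on every weight. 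This is the step with no counterpart in the case $n=1$ and where the present argument must genuinely extend \cite[Lemma~6.2]{Ch}.

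The multiplicity statement then follows immediately from Lemma~\ref{compmultdefn}: summing characters of the simple subquotients produces an equality $\ch(M) = \sum_\lambda c_\lambda \ch(L_\lambda)$ where $c_\lambda$ counts the occurrences of $L_\lambda$ among the simple quotients appearing in the filtration, and the uniqueness clause of Lemma~\ref{compmultdefn} forces $c_\lambda = k_\lambda(M)$.
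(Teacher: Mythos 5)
Your construction of the descending chains by repeated extraction of simple quotients is plausible for getting conditions (a) and (b), but the proof of condition (c) is where you acknowledge a gap, and it is a genuine one: the vague expectation that "each outer coordinate increment guarantees one further layer of $\n_n^-$ acts trivially" is not how the termination is actually established and does not amount to a proof.

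The paper's argument differs in two essential ways. First, before constructing the filtration it reduces to the case $\mu(\h_n^n) = 0$ via the parabolic induction equivalence (Theorem~\ref{maintheorem}) and the twisting equivalences (Theorem~\ref{maintwistingtheorem}). You skip this step, but it is what makes (c) tractable. Second, the outer layers of the filtration are not obtained by an abstract "take the intersection and restart": the paper sets $M_{(i,0,\dots,0)} = (\g_n^n)^i M$, a concrete algebraic filtration by powers of the abelian ideal $\g_n^n$. Each section $(\g_n^n)^i M / (\g_n^n)^{i+1} M$ lives in $\O^{(\mu)}(\g_{n-1})$, so the refinement proceeds by induction on $n$, with the base case $n=1$ landing in BGG category $\O$ where modules have finite length. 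Condition (c) is then equivalent to $\bigcap_i (\g_n^n)^i M = 0$, which is preserved under quotients and extensions (since weight spaces are finite dimensional), so it suffices to check on Verma modules. There the reduction $\lambda_n = \mu_n = 0$ yields a positive grading on $M_\lambda$ (set $\deg x_i = 0$ for $i<n$, $\deg x_n = 1$) in which $(\g_n^n)^i M_\lambda$ sits in degree $\ge i$, so the intersection is zero. Your abstract extraction gives you no comparable handle on the intersections $\bigcap_k M_{(\dots,k)}$: an arbitrary chain of simple quotients need not have vanishing global intersection, and nothing in the proposal pins down what the outer filtration terms are, so (c) cannot be deduced from the bare structure described. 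To repair the argument you would need both the reduction to $\mu_n = 0$ and the identification of the outer filtration with the $(\g_n^n)$-adic one.
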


\begin{proof}
First observe that by Theorems \ref{maintheorem} and \ref{maintwistingtheorem} we may reduce to the case $\mu(\h_n^n) = 0$.

Recall the notation $\g_n^n = \g \otimes t^n \subseteq \g_n$.

First suppose $n = 1$. In this case $M$ has a filtration $M \supseteq \g_1^1 M \supseteq (\g_1^1)^2 M \supseteq \cdots$. Each quotient $(\g_1^1)^i M / (\g_1^1)^{i+1} M$ lies in BGG category $\calO$ for $\g$ and hence has finite length, so this may be refined to a filtration satisfying (a) and (b).

For $n > 1$ consider the filtration $M \supseteq \g_{n}^n M \supseteq (\g_{n}^n)^2 M \supseteq \cdots$. Each quotient $(\g_n^n)^i M / (\g_n^n)^{i+1} M$ lies in the category $\calO^{(\mu)}(\g_{n-1})$, where we identify $\mu$ with an element of $(\h_{n-1}^{\ge 1})^*$ in the obvious manner, so we may set $M_{(i, 0, \dots, 0)} = (\g_{n}^n)^i M$ and argue by induction that this may be refined to a filtration satisfying (a) and (b).

To show this filtration satisfies (c), it is enough to show that whenever $\mu(\h_n^n) = 0$ every $M \in \calO^{(\mu)}(\g_n)$ has the property that $\bigcap_{i \geq 0} (\g_{n}^n)^i M = 0$. By considering weight spaces, this property is preserved by taking quotients and extensions, so it is enough to verify this in the case where $M = M_{\lambda}$ is a Verma module. Since $\lambda_n = \mu_n = 0$ this Verma module is graded: we place a grading on $U(\g_n)$ by setting, for $x\in \g$,
\begin{eqnarray*}
\deg x_{i} = \left\{ \begin{array}{rl} 0 & \text{ if } i=0,...,n-1,\\ 1 & \text{ if } i =n. \end{array} \right.
\end{eqnarray*}
We transfer the induced grading on $U(\n^-_n)$ to $M_\lambda$ via the isomorphism of $\n^-_n$-modules $U(\n^-_n) \cong M_\lambda$. Now $M_\lambda$ is a positively graded $\g_n$-module and $\bigcap_{i \geq 0} (\g_{n}^n)^i M_\lambda$ is contained in the intersection of all graded components, which is zero.

Finally, we observe that $\ch(M) = \sum_{(i_0, \dots, i_n) \in \mathcal{I}, i_n > 0} \ch (M_{i_0, \dots, i_n - 1}/ M_{i_0, \dots, i_n})$, so by uniqueness in Lemma \ref{compmultdefn} the final part of the Lemma holds.
\end{proof}

This result justifies the use of the terminology composition multiplicities, for $k_\lambda(M)$. From now on we use the notation $[M : L_{\lambda}] := k_\lambda(M)$.

\begin{corollary}
\label{compmultequivalence}
The parabolic induction and restriction functors and the twisting functors $T_\alpha, G_\alpha$ preserve composition multiplicities. $\hfill \qed$
\end{corollary}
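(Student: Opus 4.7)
The plan is to deduce the corollary from three ingredients for each functor in the statement: it is exact, it is an equivalence of categories and therefore induces a bijection on isomorphism classes of simple modules, and the $\h$-character of the image is obtained from the $\h$-character of the source by a fixed linear operation. Combined with the uniqueness clause of Lemma~\ref{compmultdefn}, these three facts force the multiplicities $k_\lambda$ to transfer along the induced bijection on simples, which is precisely the sense in which the functors ``preserve composition multiplicities''.

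First I would handle parabolic induction. Theorem~\ref{maintheorem} already gives the equivalence. A direct check shows that if $v$ is a highest weight generator of $M \in \O^{(\mu)}(\l_n)$ of weight $\lambda$, then $1 \otimes v$ is a highest weight generator of $\Ind M$ of the same weight $\lambda$, since $(\n^+ \cap \l)_n$ kills $v$ by hypothesis and $\r_n$ annihilates $1 \otimes v$ by the construction of the induced module. Being the image of a simple module under an equivalence, $\Ind L_\lambda^{\l_n}$ is simple, and hence isomorphic to $L_\lambda^{\g_n}$. By the PBW theorem, $\Ind N \cong U(\r_n^-) \otimes N$ as $\h$-graded vector spaces, so $\ch(\Ind N) = \ch(U(\r_n^-)) \cdot \ch(N)$. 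Multiplying the defining identity $\ch(M) = \sum k_\lambda(M)\,\ch(L_\lambda^{\l_n})$ by $\ch(U(\r_n^-))$ yields $\ch(\Ind M) = \sum k_\lambda(M)\,\ch(L_\lambda^{\g_n})$, and then Lemma~\ref{compmultdefn} forces $[\Ind M : L_\lambda^{\g_n}] = k_\lambda(M)$. The statement for $(\bullet)^{\r_n}$ follows immediately because it is quasi-inverse to $\Ind$.

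Next I would turn to the twisting functors. Theorem~\ref{maintwistingtheorem} provides the equivalence, and the proof of Lemma~\ref{maintwistingparta} shows that $T_\alpha$ sends the highest weight module generated by $v$ of weight $\lambda$ to the highest weight module generated by $f_{\alpha,0}^{-1} \dots f_{\alpha,n}^{-1} \otimes v$. The new highest weight $\tau(\lambda) \in \h_n^*$ is computed explicitly from \eqref{tensorweight} together with the commutation relations of Lemma~\ref{relationslemma}, and yields a bijection $\tau$ on isomorphism classes of simples with $T_\alpha L_\lambda \cong L_{\tau(\lambda)}$.

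The hard part will be verifying that $\ch(T_\alpha N)$ depends linearly on $\ch(N)$ via a fixed operation independent of $N$. Concretely, one expects the formula
\[\dim (T_\alpha N)^\nu = \sum_{i_0, \dots, i_n > 0} \dim N^{s_\alpha(\nu) + (i_0 + \dots + i_n)\alpha},\]
which reads off from the basis \eqref{e:onebasis} of $S_\alpha$ provided every $N \in \O^{(\mu)}(\g_n)$ is free as a module over $U(\langle f_{\alpha,0}, \dots, f_{\alpha,n}\rangle)$. This freeness is the natural analogue of the corresponding Takiff statement in \cite{Ch}, and should follow from the same kind of inductive argument, crucially exploiting the standing hypothesis $\mu(h_{\alpha,n}) \neq 0$. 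Once the character formula is in place, applying it to $\ch(M) = \sum k_\lambda(M)\,\ch(L_\lambda)$ and invoking Lemma~\ref{compmultdefn} once more concludes the proof for $T_\alpha$, and the claim for $G_\alpha$ follows by quasi-invertibility.
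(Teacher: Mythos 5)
Your proof takes a genuinely different route from what the paper intends.  The corollary carries a bare $\qed$ because it is meant to follow immediately from Lemma~\ref{compmultalternatedefn}: each of the four functors is an equivalence of abelian Jordan blocks (Theorems~\ref{maintheorem} and \ref{maintwistingtheorem}), hence is exact and preserves the lattice of subobjects.  Applying any of them to a filtration of $M$ satisfying conditions (a)--(c) of Lemma~\ref{compmultalternatedefn} therefore yields a filtration of the image satisfying the same conditions, with simple sections sent bijectively to simple sections; the uniqueness clause of that lemma then forces $[M:L_\lambda] = [F(M):F(L_\lambda)]$.  No character computation is required.  Your approach instead argues directly through the character-based definition in Lemma~\ref{compmultdefn}.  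For parabolic induction and restriction this works cleanly: the PBW factorisation $\Ind N \cong U(\r_n^-)\otimes N$ of $\h$-graded spaces, the identification $\Ind L_\lambda^{\l_n}\cong L_\lambda^{\g_n}$, and uniqueness in Lemma~\ref{compmultdefn} give exactly the required equality, essentially re-deriving the mechanism behind Lemma~\ref{L:characterupgrade} and Corollary~\ref{zeroblockcompmult}.

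For the twisting functors, however, there is a genuine gap.  Your character formula for $T_\alpha N$ requires that every $N \in \O^{(\mu)}(\g_n)$ be free as a $U(\langle f_{\alpha,0},\dots,f_{\alpha,n}\rangle)$-module, and you defer this to ``the same kind of inductive argument'' as in the Takiff case.  This freeness statement does not appear anywhere in the present paper and is not a formal consequence of anything proved here; it requires its own non-trivial argument (filtering by highest weight modules, verifying freeness for Vermas using the hypothesis $\mu(h_{\alpha,n})\neq 0$, and splitting off $U(\mathfrak a)$-submodules at each stage).  Until that lemma is actually supplied, the twisting-functor half of your argument is incomplete.  The filtration-based argument sketched above bypasses this entirely, since exactness of $T_\alpha$ on the block $\O^{(\mu)}(\g_n)$ already follows formally from its being an equivalence of abelian categories, and that is all that Lemma~\ref{compmultalternatedefn} requires.  (There is also a minor sign slip: with the convention of \eqref{tensorweight}, a vector $f_{\alpha,0}^{-i_0}\cdots f_{\alpha,n}^{-i_n}\otimes w$ with $w\in N^\lambda$ has weight $s_\alpha(\lambda)-(i_0+\cdots+i_n)\alpha$, so the summand in your proposed formula should read $\dim N^{s_\alpha(\nu)-(i_0+\cdots+i_n)\alpha}$ rather than with a plus sign.)
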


\subsection{Computation of composition multiplicities for Verma modules}


We now wish to compute the composition multiplicities $[M_\lambda : L_\nu]$. Thanks to Lemma~\ref{generalisedeigenvalues} and Lemma~\ref{compmultalternatedefn} we know that $[M_\lambda : L_\nu] = 0$ unless $\lambda_n = \nu_n$ and so, using a combination of twisting functors and parabolic induction functors, we can reduce to the case $\lambda_n  = \nu_n = 0$, and this is our strategy.

Let $p : \Z_{\ge 0} \Phi^+ \to \Z_{\ge 0}$ be Kostant's partition function. For $\alpha \in \Z \Phi$ we write $\alpha_0 \in \h_{n-1}^*$ to be the element satisfying $\alpha_0(h_i) = 0$ for $i > 0$ and $\alpha_0(h_0) = \alpha(h)$, where $h\in \h$. Also, for any $\lambda \in\h_n^*$ we identify $\lambda_{\le n-1}$ with an element of $\h_{n-1}^*$ in the obvious manner.

\begin{lemma}
\label{L:characterupgrade}
If $M_{\lambda_{\le n-1} - \alpha_0}(\g_{n-1})$ is the Verma module over $\g_{n-1}$ then
$$\ch M_\lambda = \sum_{\alpha \in \Z_{\ge 0} \Phi^+} p(\alpha) \ch (M_{\lambda_{\le n-1} - \alpha_0}(\g_{n-1})).$$ 
\end{lemma}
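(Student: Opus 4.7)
The plan is to reduce both sides to infinite products of geometric series in $\h^*$-characters, using the PBW theorem together with the classical fact that Kostant's partition function encodes $\ch U(\n^-)$.

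First I would apply the PBW theorem to $M_\lambda$: as a $U(\n_n^-)$-module it is free on the highest weight generator $v_\lambda$, so
\begin{equation*}
\ch M_\lambda \;=\; e(\lambda_0)\,\ch U(\n_n^-),
\end{equation*}
where the character is computed with respect to the $\h=\h_n^0$-action. Similarly, for any $\mu\in\h_{n-1}^*$,
\begin{equation*}
\ch M_\mu(\g_{n-1}) \;=\; e(\mu_0)\,\ch U(\n_{n-1}^-).
\end{equation*}
Note that $(\lambda_{\le n-1}-\alpha_0)_0 = \lambda_0-\alpha$ by the definition of $\alpha_0$ given just above the statement of the lemma.

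Next I would exploit the decomposition $\n_n^- = \n^- \oplus \n_n^{-,\ge 1}$. Since $t$ has zero $\h$-weight, the shift $f_{\beta,i}\mapsto f_{\beta,i-1}$ gives an $\h$-equivariant linear isomorphism $\n_n^{-,\ge 1}\cong \n_{n-1}^-$, and hence $\ch U(\n_n^{-,\ge 1}) = \ch U(\n_{n-1}^-)$. Combining with PBW for $U(\n_n^-) \cong U(\n^-)\otimes U(\n_n^{-,\ge 1})$ yields the factorisation
\begin{equation*}
\ch U(\n_n^-) \;=\; \ch U(\n^-)\cdot \ch U(\n_{n-1}^-).
\end{equation*}

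Finally, the definition of Kostant's partition function gives $\ch U(\n^-) = \sum_{\alpha\in \Z_{\ge 0}\Phi^+} p(\alpha)\,e(-\alpha)$; substituting this into the previous identity and then absorbing the factor $e(\lambda_0-\alpha)\,\ch U(\n_{n-1}^-)$ into $\ch M_{\lambda_{\le n-1}-\alpha_0}(\g_{n-1})$ produces the claimed formula. There is no real obstacle here: the lemma is a straightforward bookkeeping exercise once one recognises that the copy of $U(\n^-)$ sitting inside $U(\n_n^-)$ on the $t^0$ factor is exactly the source of the Kostant partition function, while the remaining graded pieces assemble into the character of a $\g_{n-1}$-Verma.
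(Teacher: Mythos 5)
Your proof is correct and takes essentially the same approach as the paper: PBW reduces the computation to characters of $U(\n_n^-)$, which factor as a product of the Kostant term $\ch U(\n^-)$ and $\ch U(\n_{n-1}^-)$. The only (cosmetic) difference is that you split off the degree-$0$ piece $\n^-\otimes t^0$ and identify the remainder $\n_n^{-,\ge 1}$ with $\n_{n-1}^-$ via the degree shift, whereas the paper splits off the degree-$n$ piece $\n^-\otimes t^n$ so that the remaining factor $\n_{n-1}^-$ is literally a subspace of $\n_n^-$; since $t$ carries $\h$-weight $0$, the two decompositions give identical $\h$-characters, so nothing is lost either way.
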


\begin{proof}
Note that characters can be defined for any semisimple $\h$-module with finite dimensional weight spaces. Let $\C_{\lambda_0}$ be the $\h$-module of weight $\lambda_0$.

Since $M_\lambda \cong U(\n^-_n) \otimes \C_{\lambda_0}$ as $\h$-modules the lemma follows from the facts:
\begin{enumerate}
\item[(i)] The character of $U(\n^-_n)$ is equal to the character of $S(\n^-_n)$.
\item[(ii)] $S(\n^-_n)$ is a free module over $S(\n^-_{n-1})$ and, for $\alpha \in \Z_{\ge 0} \Phi^+$, there are $p(\alpha)$ basis vectors of weight $-\alpha$.
\end{enumerate} 
\end{proof}

\begin{corollary}
\label{zeroblockcompmult}
If $\lambda, \nu\in \h_n^*$ satisfy $\lambda_n = \nu_n = 0$, and $\lambda_{\ge 1} = \nu_{\ge 1}$, and $L_{\nu_{\le n-1}}(\g_{n-1})$ denotes the simple $\g_{n-1}$-module of highest weight $\nu_{\leq n-1}\in \h_n^*$ then
$$[M_{\lambda}: L_{\nu}] = \sum_{\alpha \in \Z_{\ge 0}\Phi^+} p(\alpha) [M_{\lambda_{\leq n-1} - \alpha_0}(\g_{n-1}): L_{\nu_{\le n-1}}(\g_{n-1})].$$
\end{corollary}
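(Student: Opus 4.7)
The proof proposal is essentially a character-counting argument that combines Lemma~\ref{L:characterupgrade} with the identification of simples across different truncations.

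First I would observe that since $\lambda_n = 0$, by Remark~\ref{R:Vermasinblocks} every simple subquotient $L_{\nu'}$ of $M_\lambda$ satisfies $\nu'_{\ge 1} = \lambda_{\ge 1}$, and in particular $\nu'_n = 0$. Under the pullback functor $p : \calO(\g_{n-1}) \to \calO(\g_n)$ from \eqref{e:isafunctor}, Lemma~\ref{simple0modules} gives $L_{\nu'} \cong p(L_{\nu'_{\le n-1}}(\g_{n-1}))$ for any such $\nu'$. Since the pullback preserves $\h$-weight spaces, this yields the equality of formal characters $\ch L_{\nu'} = \ch L_{\nu'_{\le n-1}}(\g_{n-1})$ as functions on $\h^*$. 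The same observation applies to the Verma modules $M_{\lambda_{\le n-1} - \alpha_0}(\g_{n-1})$, whose simple subquotients are all of the form $L_{\mu}(\g_{n-1})$ with $\mu_{\ge 1} = (\lambda_{\le n-1})_{\ge 1}$.

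Next I would compute $\ch M_\lambda$ in two ways. On one hand, by Lemma~\ref{compmultdefn} applied in $\calO^{(\lambda_{\ge 1})}(\g_n)$ together with Step~1,
\[
\ch M_\lambda \;=\; \sum_{\nu'} [M_\lambda : L_{\nu'}]\, \ch L_{\nu'_{\le n-1}}(\g_{n-1}),
\]
where $\nu'$ ranges over elements of $\h_n^*$ with $\nu'_{\ge 1} = \lambda_{\ge 1}$ (so in particular $\nu'_n=0$). On the other hand, by Lemma~\ref{L:characterupgrade} followed by an application of Lemma~\ref{compmultdefn} in $\calO((\lambda_{\le n-1})_{\ge 1})(\g_{n-1})$,
\[
\ch M_\lambda \;=\; \sum_{\alpha \in \Z_{\ge 0}\Phi^+} p(\alpha) \sum_{\mu} [M_{\lambda_{\le n-1} - \alpha_0}(\g_{n-1}) : L_\mu(\g_{n-1})]\, \ch L_\mu(\g_{n-1}),
\]
where $\mu$ ranges over $\h_{n-1}^*$ with $\mu_{\ge 1} = (\lambda_{\le n-1})_{\ge 1}$.

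Finally I would compare the two expressions. The assignment $\nu' \mapsto \nu'_{\le n-1}$ gives a bijection between the indexing sets of the two sums, so they may be regarded as expansions in the same family $\{\ch L_\mu(\g_{n-1})\}$. The uniqueness statement in Lemma~\ref{compmultdefn} (applied to the BGG category $\calO((\lambda_{\le n-1})_{\ge 1})(\g_{n-1})$) then provides the required linear independence, and matching the coefficient of $\ch L_{\nu_{\le n-1}}(\g_{n-1})$ on both sides yields the claimed formula. The main (very minor) obstacle is purely bookkeeping: checking that the index conventions match up so that the sum on the $\g_{n-1}$-side really is supported on the correct Jordan block. $\hfill\qed$
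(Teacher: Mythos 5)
Your proposal is correct and follows essentially the same route as the paper's (very terse) proof: identify $\ch L_{\nu'}$ with $\ch L_{\nu'_{\le n-1}}(\g_{n-1})$ via Lemma~\ref{simple0modules}, substitute into the character identity of Lemma~\ref{L:characterupgrade}, and match coefficients using the uniqueness in Lemma~\ref{compmultdefn}. The only difference is that you spell out explicitly the bookkeeping about Jordan blocks and the bijection $\nu'\mapsto\nu'_{\le n-1}$ that the paper leaves implicit.
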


\begin{proof}
By Lemma~\ref{simple0modules} we have $\ch L_{\nu} = \ch L_{\nu_{\le n-1}}$ for all $\nu \in \h_n^*$ satisfying $\nu_n = 0$, and so the claim follows from Lemma~\ref{L:characterupgrade}.   
\end{proof}

For $\mu_n \in \h^*$ we write $\g^{\mu_n}_{n-1} := (\g^{\mu_n})_{n-1} = (\g_{n-1})^{\mu_n}$.

\begin{corollary}
Let $\mu \in (\h_n^{\ge 1})^*$ such that $\g^{\mu_n}$ is the Levi factor of a standard parabolic. Then for any $\lambda, \nu \in \h_n^*$ such that 
$\lambda_{\ge 1} = \nu_{\ge 1} = \mu$ we have
\[[M_{\lambda}: L_{\nu}] =\sum_{\alpha \in \mathbb{Z}_{\ge 0}\Phi^+} p(\alpha) [M_{\lambda_{\le n-1} - \alpha_0} (\g^{\mu_n}_{n-1}): L_{\nu_{\leq n-1}}(\g^{\mu_n}_{n-1})].\]
\end{corollary}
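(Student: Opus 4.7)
The plan is to combine the equivalences of categories established in Sections~\ref{S:parabolicinduction} and \ref{S:twisting} with the already-proven case of Corollary~\ref{zeroblockcompmult}. Set $\l := \g^{\mu_n}$; by hypothesis this is a standard Levi, so Theorem~\ref{maintheorem} provides an equivalence $\Ind : \O^{(\mu)}(\l_n) \isoto \O^{(\mu)}(\g_n)$, which by Corollary~\ref{compmultequivalence} preserves composition multiplicities. A direct PBW calculation, using the decomposition $U(\g_n) \cong U(\r^-_n) \otimes U(\p_n)$ (where $\r^-$ is the opposite nilradical of $\p$), shows that $\Ind M_\lambda(\l_n) \cong M_\lambda(\g_n)$, and since an equivalence sends simples to simples, $\Ind L_\nu(\l_n) \cong L_\nu(\g_n)$. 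Hence $[M_\lambda(\g_n) : L_\nu(\g_n)] = [M_\lambda(\l_n) : L_\nu(\l_n)]$.

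Next, I would reduce inside $\l_n$ to the situation $\mu_n = 0$ so as to apply Corollary~\ref{zeroblockcompmult}. Extend $\mu_n$ to $\l^*$ by zero on the root spaces and let $\sigma \in \l_n^*$ be the linear functional with $\sigma|_{\l_n^i} = 0$ for $i < n$ and $\sigma|_{\l_n^n} = \mu_n$. The condition $\l = \g^{\mu_n}$ forces $\mu_n(h_\alpha) = 0$ for every $\alpha \in \Phi_\l$, and consequently $\sigma$ annihilates the derived subalgebra $[\l_n,\l_n] = [\l,\l] \otimes \C[t]/(t^{n+1})$. Thus $\sigma$ integrates to a one-dimensional $\l_n$-module, so Lemma~\ref{L:supportonthecentreequiv} produces an equivalence $(\bullet)\otimes \C_{-\sigma}: \O^{(\mu)}(\l_n) \isoto \O^{(\mu-\sigma)}(\l_n)$ which sends $M_\lambda(\l_n)$ to $M_{\lambda-\sigma}(\l_n)$ and $L_\nu(\l_n)$ to $L_{\nu-\sigma}(\l_n)$, again preserving composition multiplicities by Corollary~\ref{compmultequivalence}.

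Finally, since $\lambda_{\ge 1} = \nu_{\ge 1} = \mu$, the weights $\lambda' := \lambda - \sigma$ and $\nu' := \nu - \sigma$ satisfy $\lambda'_n = \nu'_n = 0$ and $\lambda'_{\ge 1} = \nu'_{\ge 1} = \mu - \sigma$. Applying Corollary~\ref{zeroblockcompmult} to $\l_n$ therefore expresses $[M_{\lambda'}(\l_n) : L_{\nu'}(\l_n)]$ as a sum of composition multiplicities in $\O(\l_{n-1})$ indexed by the relevant positive roots, weighted by a partition function. Since $\sigma$ is supported on the top graded piece, $(\lambda')_{\le n-1} = \lambda_{\le n-1}$ and $(\nu')_{\le n-1} = \nu_{\le n-1}$, and substitution yields the claimed identity. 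The main obstacle lies in the second step: checking that $\sigma$ really defines a character of $\l_n$, which relies crucially on the hypothesis that $\g^{\mu_n}$ is a standard Levi and is the precise point where the structural results of the preceding sections are brought to bear on the multiplicity calculation.
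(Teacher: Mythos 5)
Your argument follows the same route as the paper's own one-line proof, which cites Theorem~\ref{maintheorem}, Lemma~\ref{L:supportonthecentreequiv}, Corollary~\ref{compmultequivalence} and Corollary~\ref{zeroblockcompmult} and leaves to the reader the verification that the equivalences send $M_\lambda \mapsto M_\lambda$, $L_\nu \mapsto L_\nu$ (respectively $M_\lambda \mapsto M_{\lambda-\sigma}$, $L_\nu \mapsto L_{\nu-\sigma}$); you carry out exactly those checks, including the key point that $\sigma$ vanishes on $\l_n'$ precisely because $\l = \g^{\mu_n}$. One remark: applying Corollary~\ref{zeroblockcompmult} inside $\O(\l_n)$ produces a sum indexed by $\alpha \in \Z_{\geq 0}\Phi^+_{\l}$ weighted by the Kostant partition function of $\l$, and the displayed formula must be read with $\Phi^+$ and $p$ taken relative to the Levi $\l = \g^{\mu_n}$ — extending the index set to all of $\Z_{\geq 0}\Phi^+$ would introduce genuinely nonzero spurious terms whenever $\lambda_0 - \nu_0 \notin \Z\Phi_{\l}$ (e.g.\ $\alpha = \lambda_0 - \nu_0$ if this lies in $\Z_{\geq 0}\Phi^+$ but uses simple roots outside $\Delta_{\l}$) — so your final ``substitution yields the claimed identity'' implicitly carries this interpretation.
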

\begin{proof}
This follows from Lemma~\ref{L:supportonthecentreequiv} and Theorem~\ref{maintheorem}, Corollary~\ref{compmultalternatedefn}, Corollary~\ref{zeroblockcompmult}. In order to apply these results one should check that the functors appearing in the first two cited results send highest weight modules to highest weight modules of the corresponding weight, and we leave this verification to the reader. 
\end{proof}

To complete the computation of composition multiplicities for all Verma modules we must show that to any Jordan block we can apply twisting functors to obtain a Jordan block $\calO^{(\mu)}$ such that $\g^\mu$ is the Levi factor of a standard parabolic. This is possible thanks to the following result (see \cite[Lemma 2.1]{Ch} for a proof).

\begin{proposition}
\label{P:weylgroupstandardparabolics}
Let $\mu \in \h^*$. Then there exists $w \in W$ and a Levi factor $\mathfrak{l}$ of a standard parabolic $\p$ such that $\g^{w(\mu)} = \mathfrak{l}$. Furthermore, if $w$ is chosen to be of minimal length subject to this condition and $w = s_{\alpha_n} s_{\alpha_{n-1}} \dots s_{\alpha_1}$ is a reduced expression for $w$, then for each $1 \leq i \leq n$, we have $((s_{\alpha_{i-1}} \dots s_{\alpha_1})\mu)(h_{\alpha_i}) \neq 0$. $\hfill\qed$
\end{proposition}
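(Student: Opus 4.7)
The plan is to treat existence of $w$ and the reduced-expression statement separately. First I would exhibit a suitable $w$. Writing $\Phi_\mu := \{\alpha \in \Phi : \mu(h_\alpha) = 0\}$, the Levi subalgebra $\g^\mu$ equals $\h \oplus \bigoplus_{\alpha \in \Phi_\mu}\g_\alpha$. Using transitivity of the $W$-action on Weyl chambers, I would choose $w \in W$ so that $w\mu$ lies in the closure of the dominant chamber, i.e.\ $(w\mu)(h_\alpha) \ge 0$ for every $\alpha \in \Phi^+$. Setting $J := \{\alpha \in \Delta : (w\mu)(h_\alpha) = 0\}$, I would then verify that $\Phi_{w\mu} = \Phi \cap \Z J$: the inclusion $\supseteq$ is immediate, while for $\subseteq$ one writes $\beta \in \Phi_{w\mu} \cap \Phi^+$ as $\beta = \sum_{\alpha \in \Delta} c_\alpha \alpha$ with all $c_\alpha \in \Z_{\ge 0}$, observes $0 = (w\mu)(h_\beta) = \sum_\alpha c_\alpha (w\mu)(h_\alpha)$, and uses the non-negativity of every summand to conclude $c_\alpha = 0$ for $\alpha \notin J$. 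Hence $\g^{w\mu}$ coincides with the standard Levi with simple roots $J$.

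For the non-vanishing claim I would use a short length argument. Suppose $w$ is chosen of minimal length satisfying the previous conclusion, fix a reduced expression $w = s_{\alpha_n}\cdots s_{\alpha_1}$, and assume for contradiction that $((s_{\alpha_{i-1}}\cdots s_{\alpha_1})\mu)(h_{\alpha_i}) = 0$ for some $i$. Since $s_{\alpha_i}$ acts on $\h^*$ by $\nu \mapsto \nu - \nu(h_{\alpha_i})\alpha_i$, this vanishing means that $s_{\alpha_i}$ fixes $(s_{\alpha_{i-1}}\cdots s_{\alpha_1})\mu$. Consequently the element $w' := s_{\alpha_n}\cdots s_{\alpha_{i+1}} s_{\alpha_{i-1}}\cdots s_{\alpha_1}$ satisfies $w'\mu = w\mu$, so $\g^{w'\mu} = \g^{w\mu}$ is still a standard Levi; but $\ell(w') \le n - 1 < \ell(w)$, contradicting minimality.

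I expect the only genuinely nontrivial step to be the identification $\Phi_{w\mu} = \Phi \cap \Z J$ in the existence part, which is essentially the classical statement that every centralizer $\g^\mu$ is $W$-conjugate to a standard Levi subalgebra. Once this is granted, the reduced-expression statement is a direct Coxeter-theoretic consequence of minimality.
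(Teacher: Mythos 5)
Since the paper does not supply its own argument for this proposition but simply cites \cite[Lemma 2.1]{Ch}, I can only assess your proof on its own merits. Your treatment of the second (minimality) claim is correct and clean: deleting the fixed reflection from the reduced word produces a strictly shorter $w'$ with $w'\mu = w\mu$, contradicting minimality.

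The existence argument has two gaps. First, $\mu \in \h^*$ is a \emph{complex} linear functional on the complex Cartan $\h$, so the conditions ``$(w\mu)(h_\alpha) \ge 0$'' and ``lies in the closure of the dominant chamber'' are not directly meaningful. You need to first replace $\mu$ by a real functional with the same vanishing set $\Phi_\mu$: writing $\mu = \mu_1 + i\mu_2$ with $\mu_1, \mu_2$ in the real form $\h_\R^*$, one has $\Phi_\mu = \Phi_{\mu_1} \cap \Phi_{\mu_2} = \Phi_{\mu_1 + t\mu_2}$ for all but finitely many $t \in \R$, and then the Weyl chamber argument applies to $\mu_1 + t\mu_2$. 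Second, the identity $(w\mu)(h_\beta) = \sum_\alpha c_\alpha (w\mu)(h_\alpha)$ implicit in your computation is false in non-simply-laced types, because the coroot map $\beta \mapsto h_\beta = \beta^\vee$ is not additive (for instance, in $B_2$ one has $(\alpha_1 + \alpha_2)^\vee \ne \alpha_1^\vee + \alpha_2^\vee$). The argument must instead pass through a $W$-invariant bilinear form $(\cdot,\cdot)$ on $\h_\R^*$: one has $\mu(h_\beta) = 0 \iff (\mu,\beta) = 0$ and $\mu(h_\alpha) \ge 0 \iff (\mu,\alpha) \ge 0$, and the pairing $(\mu,\beta) = \sum_\alpha c_\alpha(\mu,\alpha)$ \emph{is} linear in $\beta$, after which the non-negativity argument you describe goes through. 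Both fixes are standard; with them, your proof is correct.
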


We now define an action $\bullet_n$ of $W$ on $\h^*$ by $w \bullet_n \lambda := w(\lambda + n\rho) - n\rho$ where $\rho$ is half the sum of the positive roots. Note this generalises the dot action of $W$ on $\h^*$, which corresponds to the case $n=1$, and which controls the central characters of $\g_0$ \cite[\textsection 1.9]{H}. We remark that this $n$-dot action appeared in the work of Geoffriau on the centre of the enveloping algebra of $\g_n$ \cite{Ge}.

If $\alpha$ is any simple root, then since $s_\alpha(\alpha) = -\alpha$ and $s_\alpha$ permutes the other positive roots we have $s_\alpha(\rho) = \rho - \alpha$. We then have
\begin{align*}
s_\alpha \bullet_n \lambda &= s_\alpha(\lambda + n\rho) - n\rho \\
&= s_\alpha(\lambda) + n\rho - n\alpha - n\rho \\
&= s_\alpha(\lambda) - n\alpha.
\end{align*}
We extend this to a $W$-action on $\h_n^*$ by setting
$$(s_{\alpha} \bullet_n \lambda)(h_i) = \left\{\begin{array}{cc}
(s_{\alpha}\bullet_n \lambda_i)(h_i) & \text{ for } i = 0\\
(s_{\alpha} \lambda_i)(h_i) & \text{ for } i \ne 0.
\end{array}\right. $$

By our calculations in the proof of Lemma~\ref{maintwistingparta}, the twisting functors $T_\alpha$ take highest weight modules of weight $\lambda \in \h_n^*$ to highest weight modules of weight $s_\alpha \bullet_n \lambda$, and hence take $M_{\lambda}$ to $M_{s_\alpha \bullet_n \lambda}$ and similar for $L_{\lambda}$. Applying Corollary \ref{compmultequivalence} again we obtain the following.

\begin{corollary}
\label{C:reducingthelevel}
Let $\lambda, \nu \in \h_n^*$ such that $\lambda_{\ge 1} = \nu_{\ge 1}$. Let $w \in W$ have a reduced expression $w = s_{\alpha_k} s_{\alpha_{k-1}} \cdots s_{\alpha_1}$ for simple reflections $s_{\alpha_i}$ such that:
\begin{enumerate}
\item[(a)] $\g^{w(\mu_n)}$ is the Levi factor of a standard parabolic.
\item[(b)] For each $1 \leq i \leq n$, we have $((s_{\alpha_{i-1}} \cdots s_{\alpha_1})\mu)(h_{\alpha_i}) \neq 0$.
\end{enumerate}
Then we have
\begin{eqnarray}
\label{e:compmultreduction}
[M_{\lambda}: L_{\nu}] = [M_{w \bullet_n \lambda}: L_{w \bullet_n \nu}] =  \sum_{\alpha \in \mathbb{Z}_{\ge 0}\Phi^+} p(\alpha) [M_{(w \bullet_n \lambda)_{\leq n-1} -\alpha_0}(\g^{w(\lambda_n)}_{n-1}): L_{(w \bullet_n \nu)_{\leq n-1}}(\g^{w(\mu_n)}_{n-1})].
\end{eqnarray}
\end{corollary}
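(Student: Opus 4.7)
The strategy is to deduce both equalities from results already established: the first from the twisting equivalences in Section~\ref{S:twisting}, and the second from the Corollary stated just before Proposition~\ref{P:weylgroupstandardparabolics}, which handles the standard Levi case directly.

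For the first equality, set $\mu := \lambda_{\ge 1} = \nu_{\ge 1}$, and define $\mu^{(0)} := \mu$ and inductively $\mu^{(i)} := s_{\alpha_i}(\mu^{(i-1)})$ for $1 \le i \le k$, so that $\mu^{(k)} = w(\mu)$. Condition (b) is exactly the assertion that $\mu^{(i-1)}(h_{\alpha_i, n}) \ne 0$, which is the non-vanishing hypothesis of Theorem~\ref{maintwistingtheorem} applied to the block $\calO^{(\mu^{(i-1)})}(\g_n)$. Hence each $T_{\alpha_i}$ restricts to an equivalence $\calO^{(\mu^{(i-1)})}(\g_n) \to \calO^{(\mu^{(i)})}(\g_n)$, and the composition $T_{\alpha_k} \circ \cdots \circ T_{\alpha_1}$ is an equivalence $\calO^{(\mu)}(\g_n) \to \calO^{(w(\mu))}(\g_n)$. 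By the discussion immediately preceding the statement, each $T_{\alpha_i}$ sends a highest weight module of weight $\gamma \in \h_n^*$ to one of weight $s_{\alpha_i} \bullet_n \gamma$; iterating produces isomorphisms $T_{\alpha_k}\cdots T_{\alpha_1}(M_\lambda) \cong M_{w \bullet_n \lambda}$ and $T_{\alpha_k}\cdots T_{\alpha_1}(L_\nu) \cong L_{w \bullet_n \nu}$. Since twisting preserves composition multiplicities by Corollary~\ref{compmultequivalence}, the first equality follows.

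For the second equality, condition (a) ensures that $\g^{w(\mu_n)}$ is a standard Levi subalgebra, and the identity $(w \bullet_n \gamma)_n = w(\gamma_n)$ (the shift by $n\rho$ affects only the degree-zero component) gives $(w \bullet_n \lambda)_n = w(\mu_n)$ and likewise for $\nu$. We may therefore apply the Corollary preceding Proposition~\ref{P:weylgroupstandardparabolics} to the pair $(w \bullet_n \lambda,\, w \bullet_n \nu)$ in the Jordan block $\calO^{(w(\mu))}(\g_n)$, which expresses $[M_{w\bullet_n \lambda} : L_{w \bullet_n \nu}]$ exactly as the right-hand side of~\eqref{e:compmultreduction}. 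The only delicate point is verifying that condition (b) permits the twistings to be applied at every step without ever leaving the non-vanishing locus required by Theorem~\ref{maintwistingtheorem}; but this is precisely the property of $w$ guaranteed by Proposition~\ref{P:weylgroupstandardparabolics}, so no additional work is needed.
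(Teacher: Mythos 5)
Your proof is correct and follows essentially the same route as the paper: the first equality comes from iterating the twisting equivalences of Theorem~\ref{maintwistingtheorem} (with condition (b) supplying the non-vanishing hypothesis at each stage, and Corollary~\ref{compmultequivalence} plus the discussion of the $\bullet_n$-action giving preservation of multiplicities and the identification of the images of $M_\lambda$ and $L_\nu$), and the second equality is the unlabelled corollary preceding Proposition~\ref{P:weylgroupstandardparabolics} applied in the block $\O^{(w(\mu))}(\g_n)$, using $(w\bullet_n\lambda)_{\ge 1} = w(\mu)$ and $(w\bullet_n\lambda)_n = w(\mu_n)$. One small remark: your closing sentence about ``verifying that condition (b) permits the twistings'' is redundant, since (b) is a hypothesis of the corollary and you already used it in the first paragraph; Proposition~\ref{P:weylgroupstandardparabolics} only guarantees that a $w$ satisfying (a) and (b) exists, which is a separate (non-vacuity) point rather than part of the proof.
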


\end{document}